\documentclass[11pt,reqno]{amsart}
\usepackage[utf8]{inputenc}
\usepackage{pgf,pgfarrows,pgfnodes,pgfautomata,pgfheaps,pgfshade,hyperref, amssymb,enumerate,amsmath}
\usepackage[all]{xy}
\usepackage[capitalize]{cleveref}
\usepackage{mathtools}
\usepackage[shortlabels]{enumitem}
\usepackage{stmaryrd}
\usepackage{fullpage}

\newtheorem{theorem}{Theorem}[section]
\newtheorem*{theorem*}{theorem}
\newtheorem{proposition}[theorem]{Proposition}
\newtheorem{lemma}[theorem]{Lemma}
\newtheorem{corollary}[theorem]{Corollary}

\newtheorem{rem}[theorem]{Remark}

\newtheorem{definition}[theorem]{Definition}

\usepackage{euscript,epsfig}
\usepackage{tikz}
\usetikzlibrary{graphs}
\usetikzlibrary[graphs]
\usetikzlibrary{arrows}
\usepackage{tikz-cd}
\usetikzlibrary{decorations.markings}
\usepackage[colorinlistoftodos]{todonotes}

\def\R{{\mathbb R}}
\def\Z{{\mathbb Z}}

\def\N{{\mathbb N}}

\def\QQ{{\mathbb Q}}

\def\cP{{\mathcal P}}

\def\cB{{\mathcal B}}

\def\cA{{\mathcal A}}
\def\cL{{\mathcal L}}

\def\cM{{\mathcal M}}

\def\cS{{\mathcal S}}

\def\cO{{\mathcal O}}

\newcommand{\bt}{\boldsymbol\tau}

\DeclareMathOperator{\Id}{Id}
\DeclareMathOperator{\diag}{diag}
\DeclareMathOperator{\Aff}{Aff}

\DeclareMathOperator{\Inf}{Inf}
\DeclareMathOperator{\sub}{sub}

\author[P.~Cecchi--Bernales]{Paulina Cecchi--Bernales}
\address{Departamento de Matem\'aticas, Universidad de Chile. Las Palmeras 3425, \~Nu\~noa, Chile.}
\email{pcecchi@uchile.cl}
\thanks{P. Cecchi--Bernales was supported by ANID/Fondecyt Iniciación
11240886}

\author[S.~Donoso]{Sebasti\'an Donoso}
\address{Departamento de Ingenier\'{\i}a Matem\'atica and Centro de Modelamiento Matem\'atico, Universidad de Chile \& UMI-CNRS 2807. Beauchef 851, Santiago, Chile.}
\email{sdonosof@uchile.cl}
\thanks{S.~Donoso was supported by ANID/Fondecyt/1241346 and  by ANID PIA/BASAL FB210005 (Centro de Modelamiento Matemático).
}

\begin{document}

\title{Entropy and complexity in a strong orbit equivalence class via pseudo-Toeplitz subshifts} 

\subjclass[2010]{Primary: 37B10; Secondary: 05A05} 

\keywords{Strong orbit equivalence, minimal Cantor systems, subshifts, complexity, entropy}

\begin{abstract}
We develop a method for constructing minimal subshifts with a prescribed bound on the factor complexity within any strong orbit equivalence (SOE) class. This implies realizations of topological entropies within any SOE class, strengthening a series of classical results by Sugisaki. While prior work established complexity controls for zero-entropy systems, the approach of the present work extends to positive-entropy regimes. Our construction introduces the class of pseudo-Toeplitz subshifts, and we show that they exist in any SOE class. More precisely, for any $\alpha \geq 1$ and sequence $g_n$ growing exponentially at rate $\log(\alpha)$, we construct a system in this class whose topological entropy is $\log(\alpha)$ and whose complexity grows strictly faster, or, under certain conditions, slower than $g_n$. As a consequence, any Choquet simplex can be realized as the set of invariant measures of a Toeplitz subshift with precisely controlled complexity, in either a zero or positive-entropy regime. 
\end{abstract}

\date{\today}

\maketitle

 \section{Introduction}

A Cantor system is a pair $(X,T)$ where $X$ is a Cantor space, and $T$ is a homeomorphism on $X$. Cantor systems form an important class of dynamical systems and include a vast range of examples, notably subshifts. For the class of minimal Cantor systems, there is a rich interplay between the algebraic properties of associated objects, such as their dimension group, and purely dynamical properties such as strong orbit equivalence. In their very influential paper, Giordano, Putnam, and Skau proved in \cite [Theorem 2.1]{Giordano_Putman_Skau_Topological_orbit_equiv_crossed_products:1995} that two Cantor minimal systems are strongly orbit equivalent if and only if they have isomorphic dimension groups with unit (we refer to \cref{subsec:soeanddg} for the precise definitions).

In contrast to what was believed when the notion of strong orbit equivalence was introduced, works have shown that strong orbit equivalence imposes no restriction on entropy and vice versa. For instance, Boyle and Handelman \cite{Boyle_Handelman_entropy_vs_oe:1994} showed that within any SOE class there exists a zero-entropy system. This result was extended by Sugisaki in a series of papers \cite{Sugisaki_entropy_SOE_homeo_I:2003, Sugisaki_subshift_within_SOE:2007, Sugisaki_entropy_SOE_homeo_II:1998, Sugisaki_toeplitz_bratteli_SOE:2001} to show that in any SOE class, any entropy can be realized. Regarding the complexity function $p_X(n)$, whose exponential growth rate accounts for the topological entropy, there are some natural restrictions on the SOE class. An important one is having a non-superlinear complexity function, meaning that $\liminf_{n\to\infty}\frac{p_X(n)}{n}<\infty$. In this case, the dimension group is an abelian group whose rational rank is finite. It was proved in \cite{Cecchi_Donoso_SOE_superlinear_complexity:2022} that non-superlinearity is, in fact, the only hard restriction on complexity within an SOE class. Specifically, we established that for any simple dimension group with unit $(G,G^+,u)$ and any sequence of positive numbers $(a_n)_{n\in\N}$ such that $\lim_{n\to\infty} n/a_n=0$, there exists a minimal subshift whose dimension group is order isomorphic to $(G,G^+,u)$ and whose complexity function $p_X(n)$ grows slower than $a_n$, meaning $\lim_{n\to\infty} p_X(n)/a_n=0$. As a consequence, it was shown that for any superlinear function one can construct, within any SOE class, a minimal subshift whose complexity function is dominated by the given function. In particular, any Choquet simplex of invariant measures can be realized with a minimal subshift of arbitrarily low superlinear complexity, extending a result by Cyr and Kra \cite{Cyr_Kra_erg_prop_zero_entropy:2020}, where they showed that the number of ergodic measures could be uncountable for arbitrarily low superlinear complexity functions. This also extends a result by Gjerde and Johansen \cite{Gjerde_Johansen_Bratteli_Toeplitz:2000}, where they proved that for any Choquet simplex $K$ there exists a zero-entropy Toeplitz subshift whose set of invariant measures is affine homeomorphic to $K$. Note that these results are mostly about zero-entropy systems. 

In this paper, we study the relationship between complexity and SOE by considering different growth rates of the complexity function. We develop a method to realize these systems within any SOE class, controlling the complexity function even in high-complexity regimes. This strengthens Sugisaki's results and, in the authors' opinion, provides a simpler, more conceptual proof of them. We also believe our approach could lead to further applications. In particular, we believe that the technique we describe here to construct subshifts with arbitrarily high entropy, could be slightly adapted and combined with some results in \cite{Sugisaki_almost11_embeddings_dimension_groups:2011} to get, inside any SOE class, a sequence of minimal subshifts with strictly increasing entropy, which would lead to a different proof of the fact that within any SOE class there exists a minimal Cantor system with infinite entropy \cite{Sugisaki_entropy_SOE_homeo_II:1998}.

We introduce the notion of {\em pseudo-Toeplitz} subshifts (see \Cref{def:pseudotoeplitz}), which correspond to minimal subshifts admitting a Bratteli-Vershik representation in whose associated Bratteli diagram the ratio between the maximal and minimal number of paths from the top vertex to a given level tends to $1$. Bratteli diagrams satisfying this property are called {\em pseudo-ERS} (see \Cref{def:ersandpseudoers}). This is inspired by the characterization of Toeplitz subshifts given in \cite{Gjerde_Johansen_Bratteli_Toeplitz:2000}, where they are identified up to conjugacy with the family of expansive Bratteli-Vershik systems associated with the {\em equal path number property} (also called {\em ERS\footnote{This stands for equal row sum.} property}), meaning that on each level of the associated Bratteli diagram, the number of paths between the top vertex and any vertex of the level is constant. In \cite{Sugisaki_toeplitz_bratteli_SOE:2001}, Sugisaki proved that in the SOE class of any Bratteli-Vershik system associated with the equal path number property there exists a Toeplitz subshift. We prove in \Cref{theo:pseudoers} that for any simple Bratteli diagram $(V,E)$, there exists a simple Bratteli diagram $(V',E')$ which verifies $K_0(V,E)\cong K_0(V',E')$ and which is pseudo-ERS. Furthermore, we prove that, within certain margins, the rate at which the ratio between the maximal and minimal number of paths from the top vertex to each level approaches $1$ can be bounded from below (see condition (iii) in the conclusion of that theorem). We then use \Cref{theo:pseudoers} to construct in \Cref{theo:low} and \Cref{theo:main} pseudo-Toeplitz subshifts inside any SOE class, with prescribed values of topological entropy and different bounds on the factor complexity function. In both results, as a consequence of the construction, if the starting SOE class contains a Bratteli Vershik system with the equal path number property, the resulting system is a Toeplitz subshift. This implies, in particular, that any Choquet simplex can be realized as the set of invariant measures of a Toeplitz subshift with precisely controlled complexity, in either a zero or positive-entropy regime.\\

\subsection{Statements of main results}

Throughout the article, $\N$ denotes the set of non-negative integers and $\N^\star$ denotes the set of positive integers. All logarithms are in base $e$. Recall that, for any two real valued functions $f,g$ defined on $\N$ or $\N^\star$, the notation $f(n)=O(g(n))$ means that $|f|$ is bounded above by $g$ up to a constant, that is, there exists $k>0$ such that for all $n$ large, $|f(n)|\leq kg(n)$. Similarly, $f(n)=\Omega(f(n))$ means that $|f|$ is not dominated by $g$ asymptotically, that is, there exists $k>0$ such that for all $n$ large, $|f(n)|\geq kg(n)$. In both cases, when the constant $k$ depends on some parameters $t_1,t_2, \ldots , t_m$, we use the notations $f(n)=O_{t_1,\ldots, t_m}(g(n))$ and $f(n)=\Omega_{t_1,\ldots, t_m}(g(n))$.\\

The main results of the article are the following.

\begin{theorem}\label{theo:pseudoers}
Let $(V,E)$ be a simple Bratteli diagram, let $h\colon \N^\star \to \R_+$ be a decreasing function satisfying $\lim_{i\to\infty}h(i)=0$ and $\lim_{i\to\infty}ih(i)=+\infty$. Let $f\colon \N^\star\to\R_+$ be any increasing function. Then, there exists a simple Bratteli diagram $(V',E')$ with sequence of incidence matrices $(M'_i)_{i\in\N}$ verifying the following conditions:
\begin{itemize}
    \item [(i)] $K_0(V,E)\cong K_0(V',E')$,
    \item [(ii)]for all $i\geq 1$, $u\in V'_{i-1}$ and $v\in V'_i$,
    $$M'_{i-1}(v,u)\geq f(|V'_i|), \text{ and }$$
    \item [(iii)]for all $i\geq 1$, $\frac{\ell'_i}{L'_i}\geq 1-h(L_i')$,
\end{itemize}
where $L_i'$ and $\ell_i'$ denote the maximal and minimal number of paths from the top vertex to the vertices in $V_i'$, respectively. In particular, $(V',E')$ is pseudo-ERS.
\end{theorem}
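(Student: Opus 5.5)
We will build $(V',E')$ by telescoping $(V,E)$ and then inserting auxiliary levels. Neither operation changes the dimension group up to isomorphism, so (i) comes for free as long as every new incidence matrix factors the old ones, i.e.\ $M_{n_{k+1}-1}\cdots M_{n_k}=B_kA_k$ with nonnegative integer matrices $A_k,B_k$. The $K_0$ of the diagram obtained by interleaving the $A_k$ and $B_k$ is then the same inductive limit as for the telescoping. Condition (ii) concerns entrywise size. Condition (iii) concerns near-equality of the height vectors $L'_i$, the number of paths from the top vertex to each vertex of level $i$.

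\emph{Step 1 (telescoping for large entries).} Since $(V,E)$ is simple, after telescoping along a sparse subsequence all products $M_{n_{k+1}-1}\cdots M_{n_k}$ are strictly positive. By telescoping further, their entries grow as fast as we like. Let $h_k\in\N^{V_{n_k}}$ be the height vector at level $n_k$, with maximum $H_k$ and minimum $\eta_k$. The ratio $H_k/\eta_k$ may stay bounded away from $1$ in the original diagram, so we correct it by inserting levels.

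\emph{Step 2 (inserting an equalizing level).} Between levels $n_k$ and $n_{k+1}$ we insert a new level $W_k$ and write the telescoped matrix $P=M_{n_{k+1}-1}\cdots M_{n_k}$ as $P=BA$. Here $A$ is $W_k\times V_{n_k}$, and each row $w$ of $A$ is chosen so that $\langle A(w,\cdot),h_k\rangle$ lies in an interval $[N,N+H_k]$ for a huge common target $N$. This uses the standard trick: decompose each row $P(v,\cdot)$ into a sum of nonnegative integer vectors, each with height close to $N$ and each with all coordinates large. The leftover pieces are spread across the blocks rather than collected into an extra block that would spoil the ratio. Concretely, take $P(v,\cdot)=\sum_{j=1}^{m_v} a_{v,j}$ with $a_{v,j}\ge c\mathbf 1$ entrywise. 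We need $\langle a_{v,j},h_k\rangle\in[N,N+H_k]$ for all $j$, where $N\gg H_k$ but $N\ll \min_v\langle P(v,\cdot),h_k\rangle$. Greedy splitting achieves this: peel off chunks by adding unit vectors until the height exceeds $N$. The final remainder is less than $N$ and is absorbed into the existing chunks by distributing unit vectors, which raises each chunk by at most about $N H_k/m_v$. This is tiny relative to $N$ when $m_v$ is large. The vertex set $W_k$ is the set of pairs $(v,j)$, and $B(v,(v,j))=1$. The new level then has $\ell'/L'\ge 1-O(H_k/N)-O(N/\min\langle P(v,\cdot),h_k\rangle)$, and both error terms can be made arbitrarily small by telescoping enough. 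Next, so that the following level is also nearly equal, we immediately telescope $B$ together with the next $A$. The diagram actually used is then $V_{n_0},W_0,W_1,\dots$, with matrices $A_{k+1}B_k$, and again the $K_0$ is unchanged. Since $B$ is a $0$-$1$ matrix, all heights at a $W$ level lie in $[N,N+O(H_k)]$, and multiplying by positive matrices preserves a ratio close to $1$.

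\emph{Step 3 (meeting (ii) and (iii) quantitatively).} By induction on $i$, first choose $N=N_i$ huge. Then $|W_i|$ is determined: it is roughly the total height at the next telescoped level divided by $N$, and it is finite once the telescoping depth is fixed. We need entries of $A_{i+1}B_i$ to be at least $f(|W_{i+1}|)$, and $1-\ell'/L'\le h(L')$ with $L'\approx N_{i+1}$. Since $ih(i)\to\infty$, it suffices to get $1-\ell'/L'\le C/L'^{\,1-\epsilon}$ with a slack that the chunk construction provides. Rather than fixing an exponent, we choose the chunk count $m_v$ adaptively so that $NH/m_v+H\le h(N)N$, which is possible precisely because $Nh(N)\to\infty$. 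The circularity here is the main obstacle: $|W_{i+1}|$ depends on how much we telescope, and the entries must beat $f$ of it. We resolve it by choosing the target $N_{i+1}$ first. The entries of $A_{i+1}$ are large, at least $c$, where $c$ can be taken around $N_{i+1}/(|V|H_i)$. The count $|W_{i+1}|$ is controlled by the telescoping depth of the \emph{next} stage, so we choose things in the order: telescoping depth, then $N$, then chunk minimum $c$. The delicate point to verify is that the minimum-entry requirement $a_{v,j}\ge c\mathbf 1$ is compatible with all chunk heights being nearly equal. We handle this by reserving $c\mathbf 1$ in each chunk first and doing the greedy equalization only on the surplus.

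Finally, pseudo-ERS follows from (iii) since $h\to0$ and $L'_i\to\infty$. Simplicity of $(V',E')$ follows from (ii), because all entries are positive.
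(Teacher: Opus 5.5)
\textbf{There is a genuine gap in condition (ii).} Your outline handles (i) and (iii), but the construction cannot deliver (ii). Ordering the choices as ``telescoping depth, then $N$, then $c$'' does not remove the circularity you identified.

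In your scheme the inserted level $W_k$ consists of chunks $(v,j)$ of the rows of $P^{(k)}=M_{[n_k,n_{k+1})}$, so $|W_k|\ge |V_{n_{k+1}}|$. Every entry of the matrix $A_kB_{k-1}$ entering $W_k$ is a chunk coordinate $a^{(k)}_{v,j}(u)$. It satisfies $a^{(k)}_{v,j}(u)\le P^{(k)}(v,u)\le h_{n_{k+1}}(v)$, a height at level $n_{k+1}$ of the original diagram. So (ii) would need $\min_v h_{n_{k+1}}(v)\ge f(|V_{n_{k+1}}|)$ already in $(V,E)$, and nothing forces this.

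Here is a concrete failure. Take every incidence matrix to be an all-ones matrix, and choose $|V_{n+1}|=h_{n+1}+1$. This is legitimate because the common height $h_{n+1}=|V_n|h_n$ does not depend on $|V_{n+1}|$. Then $|V_n|>h_n$ at every level, and this survives any telescoping. With $f(n)=n$, no choice of depth, target $N$ or reserved minimum $c$ makes the entries reach $f(|W_k|)$, since going deeper enlarges $|V_{n_{k+1}}|$ together with the heights.

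Two smaller points also need attention. Reserving $c\mathbf 1$ in each of the $m_v$ chunks requires $m_v c\le \min_u P^{(k)}(v,u)$, which you never check. In (iii), since $h$ is decreasing, you must evaluate $h$ at an upper bound for the new heights, not at $N$.

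\textbf{The missing idea.} The size of the inserted level must depend only on the level \emph{above} it, which is already fixed when the multiplicities are chosen. The paper places two vertices $w_u^1,w_u^2$ below each $u\in\widetilde V_i$, with $A_i(w_u^1,u)=K_i^{(u)}=\lfloor K_i/\ell_u\rfloor$ and $A_i(w_u^2,u)=K_i^{(u)}+1$. Thus $|V_i'|=2|\widetilde V_i|$ is known before choosing $K_i>(f(2|\widetilde V_i|)+1)\widetilde L_i$ with $(K_i+\widetilde L_i)h(K_i+\widetilde L_i)\ge 2\widetilde L_i$. The consequences are:
\begin{itemize}
\item all heights at $V_i'$ lie in $[K_i-\widetilde L_i,K_i+\widetilde L_i]$;
\item the entries entering $V_i'$ are at least $K_i^{(v)}\ge f(|V_i'|)$;
\item the factorization $M_{[t_i,t_{i+1})}=B_iA_i$ comes from writing each telescoped entry as $n K_i^{(u)}+m(K_i^{(u)}+1)$ with $n,m\ge 1$.
\end{itemize}
The last step is the Frobenius coin problem for the coprime pair $K_i^{(u)}$, $K_i^{(u)}+1$, and it is available once the entries exceed $K_i^{(u)}(K_i^{(u)}+1)$. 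This replaces your greedy equalization altogether.
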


The following result shows that for any $\alpha > 1$ and $g_n$ a sequence verifying $\lim_{n\to\infty}\frac{\log(g_n)}{n}=\log(\alpha)$, under certain conditions on the speed of convergence of $\frac{\log(g_n)}{n}$ towards $\log(\alpha)$, any SOE class contains a pseudo-Toeplitz subshift whose factor complexity grows slower than $g_n$ and whose topological entropy equals $\log(\alpha)$.

\begin{theorem}\label{theo:low}
Let $(X,T)$ be a minimal Cantor system. Let $\alpha > 1$ and let $(g_n)_{n\in\N}$ be a sequence of positive real numbers such that $\lim_{n\to\infty}\frac{\log(g_n)}{n}=\log(\alpha)$. For each $n\geq 1$, let $C_n=\frac{\log(g_n)}{n}-\log(\alpha)$. Suppose that $\log(g_n)/n$ has a strictly decreasing subsequence and there exists a function $F\colon \N^\star \to\N^\star$ with $\lim_{i\to\infty}F(i)\nearrow +\infty$ and $\lim_{i\to\infty}\frac{F(i)}{i}=0$, such that there exists $K>2\log(\alpha)+2$ which verifies
\begin{equation}\label{eq:suffcondlow}
    F(n)C_{nF(n)}\geq K \quad \text{for all large enough } n.
\end{equation}
Then, there exists a pseudo-Toeplitz shift $(Y,S)$ such that 
\begin{itemize}
    \item $(X,T)$ and $(Y,S)$ are strong orbit equivalent,
    \item $\liminf_{n\to\infty}\frac{p_Y(n)}{g_n}=0$, and 
    \item $h(Y,S)=\log(\alpha)$.
\end{itemize}

\end{theorem}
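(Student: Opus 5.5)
We begin with a Bratteli--Vershik representation $(V,E)$ of $(X,T)$ and apply \Cref{theo:pseudoers}. This gives a simple, pseudo-ERS diagram $(V',E')$ with $K_0(V',E')\cong K_0(V,E)$. We choose the function $h$ so that $\ell'_i/L'_i\to1$ fast enough that the lengths of the towers at level $i$ all lie in $[(1-h(L'_i))L'_i,L'_i]$. We choose $f$ increasing and fast enough that each level has many edges between any two vertices, leaving room to decorate them with symbols. Next we telescope and refine the diagram, which does not change $K_0$, so that the heights $L'_i$ grow along a sparse sequence adapted to the given strictly decreasing subsequence of $\log(g_n)/n$ and to the function $F$. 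We then build a proper ordering together with a labelling of level-$i$ towers by words $w_v^{(i)}$, $v\in V'_i$, over a finite alphabet. Each word $w_v^{(i+1)}$ is a concatenation of level-$i$ words read along the ordered incoming edges. By Giordano--Putnam--Skau, the resulting expansive Bratteli--Vershik system is SOE to $(X,T)$. We also keep the first-level words distinct and the concatenations recognizable, so that the subshift $Y$ is conjugate to the Bratteli--Vershik system.

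To control entropy, we proceed as follows. At stage $i$ we use an alphabet of roughly $\alpha^{m}$ choices of "free" blocks of length $m$, which gives about $\alpha^{L'_{i+1}}$ distinct level-$(i+1)$ words; this is exactly where condition (ii) with large $f$ is needed. We also insert a fixed-length "marker/consistency" part whose proportion tends to $0$. The pseudo-ERS property makes all towers have essentially the same height $L'_i$, so the number of words of length $L'_i$ is at most $|V'_i|\cdot L'_i\cdot(\text{number of level-}i\text{ words})^2$. This gives $\limsup\frac1n\log p_Y(n)\le\log\alpha$. The matching lower bound comes from the free choices: the proportion of free positions tends to $1$, so $h(Y,S)=\log\alpha$. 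Minimality follows from simplicity, provided every level-$i$ word appears in every level-$(i+1)$ word.

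The low-complexity estimate is the crux. For the subsequence $n_i=L'_i$ we get $p_Y(n_i)\le\mathrm{poly}(L'_i)\,\alpha^{(1+\varepsilon_i)n_i}$, where $\varepsilon_i$ measures the overhead coming from the markers, from the tower length defect $h(L'_i)$, and from the $|V'_i|$ factor. We want this to be $o(g_{n_i})=o(\alpha^{n_i}e^{n_iC_{n_i}})$. So we need the total overhead, which is of size about $\log(\alpha)\,n_ih(n_i)+\log|V'_i|+O(\log n_i)$, to be smaller than $n_iC_{n_i}$. Here we exploit condition \eqref{eq:suffcondlow}. We take level $i$ to have $L'_i\approx nF(n)$, with $n$ playing the role of the block length of the previous level, and we choose $h(L)$ comparable to $1/F$, which still satisfies $Lh(L)\to\infty$ because $F(i)/i\to0$. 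Then the overhead per symbol is about $(2\log\alpha+2)/F(n)$, where the $2$'s account for the two-word concatenation and for the log-factors. By \eqref{eq:suffcondlow} this is strictly below $C_{nF(n)}\ge K/F(n)$, and the slack $K-2\log\alpha-2>0$ gives $p_Y(n_i)/g_{n_i}\to0$. The strictly decreasing subsequence is used to pick these $n_i$ so that $C_{n_i}>0$ and the comparison is consistent. Finally, if $(V,E)$ already had equal path numbers, we keep the ERS property (taking $h\equiv0$ there), so $Y$ is Toeplitz. I expect the main obstacle to be the bookkeeping at this step: making the free-block choices compatible with recognizability while keeping $|V'_i|$ and the marker overhead below the $K/F$ budget.
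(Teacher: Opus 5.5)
\textbf{There is a genuine gap: nothing in your construction creates entropy.}

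You keep the diagram produced by \Cref{theo:pseudoers} (telescoped) and attach one word $w^{(i)}_v$ to each vertex $v\in V'_i$. You then arrange recognizability so that $Y$ is conjugate to the Bratteli--Vershik system on $(V',E',\geq)$. In that case there are exactly $|V'_i|$ level-$i$ words. Condition (ii) survives telescoping and gives $\ell'_i\ge f(|V'_i|)$. For any $f$ growing at least linearly this forces $\log|V'_i|/\ell'_i\to 0$, so \Cref{theo:sadicentropy} gives $h(Y,S)=0$.

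Your claim that ``an alphabet of roughly $\alpha^m$ free blocks gives about $\alpha^{L'_{i+1}}$ distinct level-$(i+1)$ words'' cannot be realized in this setting. Many edges give many possible orderings, but each vertex receives a single ordering. Orderings can therefore only make the words of \emph{different} vertices distinct, which is exactly their role in \Cref{lem:recognizable2}; they never produce more words than vertices. If instead you decorate different occurrences of a tower independently, $Y$ is no longer the Bratteli--Vershik system of $(V',E')$, and the appeal to Giordano--Putnam--Skau loses its justification.

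\textbf{The missing idea is the splitting procedure of \Cref{sec:splitting}.} After telescoping, factor $\widetilde M_i=B_iA_i$, where $A_i$ is the $0/1$ matrix attaching $q_i^{(u)}=\lfloor\alpha^{\ell_u}\rfloor+4$ new vertices to each $u\in\widetilde V_i$ by a single edge. This preserves $K_0$ and all path counts (hence pseudo-ERS, \Cref{rem:preservesers2}), while making the level-$i$ alphabet of size about $\alpha^{\widetilde L_i}$.

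The real difficulty, absent from your plan, is giving these copies pairwise distinct words. That is, you must prove $\cO(x)\ge q_{i+1}^{(x)}\approx\alpha^{\ell_x}$ for every $x=x_j\in\widetilde V_{i+1}$.
\begin{itemize}
\item By \Cref{lem:stirling2}, the number of admissible orderings satisfies $\log\cO(x)\ge\sum_u N_{x,u}\bigl(\log(A_x/N_{x,u})+\ell_u\log\alpha\bigr)-O(\log A_x)$.
\item This falls short of $\ell_x\log\alpha$ by the marker contribution $(j+1)\ell_{\overline u}\log\alpha$.
\item That deficit must be absorbed by the term $\sum_u N_{x,u}\log(A_x/N_{x,u})$, through the telescoping choices of \Cref{lem:telescoping_low}.
\end{itemize}
Only after this step do you get $p_Y(\widetilde L_i)\ge\alpha^{\widetilde L_i}$.

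\textbf{Your complexity count also has to be redone.} With about $\alpha^{L'_i}$ level-$i$ words, every later word containing all of them (which you require for minimality) has length $\gtrsim\alpha^{L'_i}\ell'_i$. So heights $L'_{i+1}\approx L'_iF(L'_i)$ are impossible. Moreover, bounding words of length $L'_i$ by two level-$i$ words only gives $p_Y(L'_i)\lesssim\alpha^{2L'_i}$, which is useless both for $h\le\log\alpha$ and for beating $g_{L'_i}$.

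The paper instead evaluates $p_Y$ at the intermediate length $F(L'_i)L'_i$, which is not a tower height. Such a word is covered by $F(L'_i)+2$ level-$i$ words thanks to $F(L'_i)L'_i\le(F(L'_i)+1)\ell'_i$. This is what yields the $(2\log\alpha+2)/F$ overhead that \eqref{eq:suffcondlow} beats.
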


\begin{rem}\label{rem:decreasing}
It is worth noting that the existence of a decreasing subsequence of $\log(g_n)/n$ is a necessary condition for the conclusion of \Cref{theo:low}. Indeed, second and third properties of $(Y,S)$ in the statement of the theorem imply that there exists a sequence $b_n$ such that $\log(p_Y(b_n))< \log(g_{b_n})$ and as the entropy $\log(\alpha)$ is the infimum of $\log(p_Y(n))/n$, we necessarily have that $\log(g_{b_n})/b_n>\log(\alpha)$. So, for a subsequence, we have $\log(g_n)/n\searrow \log(\alpha)$. The same argument proves that $b_nC_{b_n}$ goes to $+\infty$ as $n$ goes to $\infty$. Condition \eqref{eq:suffcondlow}, in contrast, is strictly stronger than this necessary condition on the sequence $(C_n)_{n\in\N}$ and it arises from our technique used to prove the theorem. We do not know if it is a necessary condition. Note that this technical condition is satisfied if $C_N=\Omega(1/N^\beta)$ for some $\beta<1/2$, and implies that $C_N\geq K/\sqrt{N}$ along $N=nF(n)$, which forces $g_n$ to grow faster than $e^{n(\log(\alpha)+1/\sqrt{n})}$. We remark that the zero-entropy version of \Cref{theo:low}, which corresponds to $\alpha=1$ and $g_n$ subexponential, is already treated in \cite[Theorem 1.2]{Cecchi_Donoso_SOE_superlinear_complexity:2022}.

\end{rem}

In the opposite direction, the following result establishes that within any SOE class, for any $\alpha\geq 1$ and for any sequence $g_n$ verifying $\lim_{n\to\infty}\frac{\log(g_n)}{n}=\log(\alpha)$, there exists a pseudo-Toeplitz subshift whose factor complexity grows faster than $g_n$ and whose topological entropy equals $\log(\alpha)$.

\begin{theorem}\label{theo:main}
Let $(X,T)$ be a minimal Cantor system. Let $\alpha \geq 1$ and let $(g_n)_{n\in\N}$ be a sequence of positive real numbers such that $\lim_{n\to\infty}\frac{\log(g_n)}{n}=\log(\alpha)$. Then, there exists a pseudo-Toeplitz shift $(Y,S)$ such that 
\begin{itemize}
    \item $(X,T)$ and $(Y,S)$ are strong orbit equivalent,
    \item $\liminf_{n\to\infty}\frac{g_n}{p_Y(n)}=0$, and 
    \item $h(Y,S)=\log(\alpha)$.
\end{itemize}

\end{theorem}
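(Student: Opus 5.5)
Let $(V,E)$ be a simple Bratteli diagram whose Bratteli--Vershik system is conjugate to $(X,T)$; such a diagram exists by the Herman--Putnam--Skau model. We apply \Cref{theo:pseudoers} to $(V,E)$, with $h$ decreasing fast enough and $f$ growing fast enough; both are fixed below. This gives a pseudo-ERS diagram $(V',E')$ with $K_0(V',E')\cong K_0(V,E)$. By Giordano--Putnam--Skau, any minimal system built on $(V',E')$, or on a telescoping of it, is SOE to $(X,T)$. The task is then to equip $(V',E')$, possibly after telescoping, with an order and a labelling. The resulting Bratteli--Vershik system should be an expansive subshift $(Y,S)$ with entropy $\log\alpha$ and with complexity exceeding $g_n$ along a sequence.

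We build $(Y,S)$ through a sequence of morphisms $\tau_i\colon V'_{i}\to (V'_{i-1})^*$, so that each vertex $v\in V'_i$ corresponds to a word $w_v$ of length $h_i(v)\in[\ell'_i,L'_i]$. At level $i$ the word for $v$ is a concatenation of the level-$(i-1)$ words, following the edges into $v$, in an order we are free to choose. Property (ii) guarantees many edges between any two vertices, and we use this freedom in two ways. First, every $\tau_i(v)$ begins with the same fixed word and ends with the same fixed word. This gives properness, hence minimality, and a recognizability property that makes the system expansive and conjugate to a subshift; this is the same mechanism as in \cite{Cecchi_Donoso_SOE_superlinear_complexity:2022}. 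Second, the middle part of each $\tau_i(v)$ is a free arrangement. Since the multiplicities $M'_{i-1}(v,u)$ are at least $f(|V'_i|)$, which is huge, the orders of the blocks can be chosen so that the set of words $\{w_v\}$, together with the words of length $L'_{i-1}$ they contain, has a prescribed cardinality. Entropy is then controlled by
$$\frac{\log\#\{\text{distinct level-}i\text{ words}\}}{\ell'_i}.$$
Because the diagram is pseudo-ERS, $\ell'_i/L'_i\to1$, so the upper and lower estimates $\log(\#)/L'_i$ and $\log(\#)/\ell'_i$ agree in the limit. It therefore suffices to arrange that the number $N_i$ of distinct subwords of length $\approx L'_i$ satisfies $\log N_i/L'_i\to\log\alpha$.

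To get both conclusions, we will not use all the freedom at every level. At level $i$ we first pick $n_i\approx L'_{i-1}$ with $g_{n_i}$ known, where $\log g_n = n(\log\alpha+o(1))$. We then choose the arrangements of the level-$(i-1)$ blocks inside the level-$i$ words so that at least $2g_{n_i}$ distinct words of length $n_i$ appear. This is possible because we may realize roughly $\alpha^{n}\cdot e^{o(n)}$ patterns, provided the number of available blocks and the multiplicities are large; this is where $f$ is chosen to grow fast. Hence $p_Y(n_i)\ge 2g_{n_i}$ along a subsequence, which gives $\liminf g_n/p_Y(n)\le 1/2$. Iterating with $k$ copies in place of $2$ at stage $k$ gives $\liminf g_n/p_Y(n)=0$. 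For the upper bound on entropy, we keep the total count of level-$i$ words at most $\exp\bigl(L'_i(\log\alpha+\varepsilon_i)\bigr)$ with $\varepsilon_i\to0$. Subwords of length $n$ are then covered by at most about $(n/\ell'_{i-1}+2)$ consecutive level-$(i-1)$ blocks, and $L'_{i-1}$ possible offsets. This gives $p_Y(n)\le L'_{i-1}\,N_{i-1}^{\,n/\ell'_{i-1}+2}$, whose exponential rate tends to $\log\alpha$ as long as $L'_{i-1}/\ell'_{i-1}\to1$; that is where $h$ enters. When $\alpha=1$, the same scheme applies with subexponential counts.

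The main obstacle is reconciling the exact entropy value $\log\alpha$ with the rigid combinatorics imposed by the incidence matrices. The letter counts in each $\tau_i(v)$ are fixed by $M'_{i-1}$, so the number of achievable arrangements is a multinomial coefficient, which is typically far larger than $\alpha^{L'_i}$. We must therefore restrict to a controlled subfamily, for instance by grouping the blocks into "free" segments of chosen total length $\approx L'_i\log\alpha/\log(\text{alphabet})$ and a rigid remainder. We must also check that lengths mismatched by the factor $\ell'_i/L'_i$ do not shift the exponential rate. I would first try making the free segment a proportion $\theta_i$ of the word, with $\theta_i$ tuned so that $\log N_i/L'_i\to\log\alpha$ and so that the pseudo-ERS error $h(L'_i)$ is negligible against $\theta_i$. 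Finally, if $(V,E)$ has the ERS property, the construction preserves equal row sums, and then $(Y,S)$ is Toeplitz.
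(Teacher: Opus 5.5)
\textbf{There is a genuine gap: your construction never enlarges the vertex sets, and the number of vertices caps both the entropy and the complexity you are trying to create.}

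For any order on $(V',E')$, or on a telescoping of it, each $v\in V'_i$ yields exactly one level-$i$ word $\tau_{[0,i)}(v)$. So there are at most $|V'_i|$ such words, and \Cref{theo:sadicentropy} gives $h(Y,S)\le\inf_i\log|V'_i|/\ell'_i$. However, condition (ii) of \Cref{theo:pseudoers}, which you invoke with $f$ growing fast to get huge multiplicities, forces $\ell'_i\ge f(|V'_i|)$. Already $f(n)\ge n$ gives $\log|V'_i|/\ell'_i\le\log\ell'_i/\ell'_i\to0$. Telescoping only passes to a subsequence of levels, so in every case $h(Y,S)=0$.

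Your own covering bound says the same thing. It reads $p_Y(n)\le L'_{i-1}N_{i-1}^{\,n/\ell'_{i-1}+2}$ with $N_{i-1}\le|V'_{i-1}|$, so at $n_i\approx L'_{i-1}$ it gives $p_Y(n_i)\le L'_{i-1}|V'_{i-1}|^{O(1)}$. This is subexponential in $n_i$, so $p_Y(n_i)\ge 2g_{n_i}=e^{n_i(\log\alpha+o(1))}$ is impossible when $\alpha>1$.

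The claim that one ``may realize roughly $\alpha^n e^{o(n)}$ patterns'' conflates two different counts. The number of possible arrangements at level $i$ is a multinomial coefficient, but each vertex receives only one arrangement. Your ``main obstacle'' paragraph is therefore backwards: the difficulty is too few words, not too many. Restricting to a subfamily of arrangements cannot help.

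\textbf{The missing idea is to enlarge the alphabet while preserving $K_0$ and all heights; this is the paper's splitting procedure.}

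After telescoping to depths $t_i$, each $u\in\widetilde V_i$ is replaced by $q_i^{(u)}=\lfloor\alpha_i^{\ell_u}\rfloor+4$ new vertices, with $\alpha_i\downarrow\alpha$ strictly. Each new vertex is joined to $u$ by a single edge, and the edges of $\widetilde M_i$ are distributed among the new vertices, so that $\widetilde M_i=B_iA_i$ with $A_i$ a $0/1$ matrix. This keeps the dimension group and every height, hence pseudo-ERS, and ERS if present. But it makes $|V'_i|\approx\alpha_i^{L'_i}$.

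The huge multiplicities are then used in the direction opposite to yours. The multinomial count of orderings, estimated via \Cref{lem:stirling2}, must be at least $q_{i+1}^{(x)}$. This lets the copies of each $x$ receive distinct words (\Cref{lem:recognizable2}), which gives injectivity on letters and recognizability.

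Both conclusions then follow directly from the alphabet size.
\begin{itemize}
\item Lower bound: $p_Y(\widetilde L_i)\ge q_i^{(u^*)}\ge\alpha_i^{\widetilde L_i}$. Choosing $\widetilde L_i$ large enough that $g(\widetilde L_i)<\alpha_i^{\widetilde L_i}/i$ gives $\liminf g_n/p_Y(n)=0$. This works because $\alpha_i>\alpha$, which is what absorbs the $o(1)$ in $\log g_n/n$.
\item Upper bound: \Cref{theo:sadicentropy} together with $\ell'_i/L'_i\to1$ gives $h(Y,S)\le\lim_i\log|V'_i|/\ell'_i=\log\alpha$.
\end{itemize}
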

 
 The following corollary is a consequence of \Cref{theo:low} and \Cref{lem:divisibleers}.

\begin{corollary}\label{cor:alphaentropymeasureslow}
Let $K$ be a Choquet simplex and $\alpha> 1$ be given. Let $(g_n)_{n\in \N}$ be a sequence of positive real numbers satisfying the hypotheses of \Cref{theo:low}. Then, there exists a Toeplitz subshift $(Y,S)$ such that
\begin{enumerate}
    \item $\cM(Y,S)$ is affine homeomorphic to $K$. 
    \item $h(Y,S)=\log(\alpha)$.
    \item The complexity of $(Y,S)$ satisfies $\liminf \frac{p_Y(n)}{g_n}=0$. 
\end{enumerate}
\end{corollary}

 The following corollary is a consequence of \Cref{theo:main}.

\begin{corollary}\label{cor:zeroentropysoe}
Let $(X,T)$ be a Cantor minimal system. Let $g_n$ be a sequence of positive real numbers which is subexponential. Then there exists a zero-entropy pseudo-Toeplitz subshift $(Y,S)$ such that 
\begin{enumerate}
    \item $(Y,S)$ and $(X,T)$ are strong orbit equivalent.
    \item The complexity of $(Y,S)$ satisfies $\liminf \frac{g_n}{p_{Y}(n)}=0$. 
\end{enumerate}
\end{corollary}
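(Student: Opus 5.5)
This follows by specializing \Cref{theo:main} to $\alpha=1$. Given a subexponential sequence $(g_n)_{n\in\N}$ of positive reals, I read subexponential as $\limsup_{n\to\infty}\frac{\log(g_n)}{n}\leq 0$. Since \Cref{theo:main} requires the exact limit $\lim_{n\to\infty}\frac{\log(g_n)}{n}=\log(1)=0$, I would first replace $g_n$ by a larger sequence with limit exponent $0$, namely
$$\tilde g_n=\max(g_n,n).$$
This sequence is positive, and $\frac{\log(\tilde g_n)}{n}\to 0$: the lower bound $\log(n)/n\to 0$ keeps the $\liminf$ at least $0$, and subexponentiality of $g_n$ keeps the $\limsup$ at most $0$. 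Any sequence dominating $g_n$ with zero exponential growth rate would serve equally well; for instance, if $g_n$ could be tiny, the term $n$ already handles the lower bound.

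Next I apply \Cref{theo:main} to $(X,T)$, $\alpha=1$ and $(\tilde g_n)$. This produces a pseudo-Toeplitz shift $(Y,S)$ that is strong orbit equivalent to $(X,T)$, has $h(Y,S)=\log(1)=0$, and satisfies $\liminf_{n\to\infty}\tilde g_n/p_Y(n)=0$. Since $0<g_n\leq \tilde g_n$, we have $0\le g_n/p_Y(n)\le \tilde g_n/p_Y(n)$ for every $n$. Taking $\liminf$ gives $\liminf_{n\to\infty} g_n/p_Y(n)=0$, which is conclusion (2). Conclusion (1) and zero entropy come directly from the theorem.

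The only point needing care is that subexponentiality alone does not guarantee the limit $\lim \log(g_n)/n$ exists and equals $0$. The domination trick above resolves this, because the complexity conclusion is monotone in $g_n$. There is no genuine obstacle beyond this reduction.
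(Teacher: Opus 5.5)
Your proof is correct and is essentially the paper's argument, which simply specializes \Cref{theo:main} to $\alpha=1$. Your replacement of $g_n$ by $\max(g_n,n)$ is a harmless extra step: it guarantees the hypothesis $\lim_{n\to\infty}\log(g_n)/n=0$ even under the weaker reading $\limsup_{n\to\infty}\log(g_n)/n\leq 0$ of ``subexponential,'' a point the paper leaves implicit.
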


The following corollaries are consequences of \Cref{theo:main} and \Cref{lem:divisibleers}.

\begin{corollary}\label{cor:alphaentropymeasures}
Let $K$ be a Choquet simplex and $\alpha\geq 1$ be given. Let $(g_n)_{n\in \N}$ be a sequence of positive real numbers satisfying $\lim \frac{\log(g_n)}{n}=\log(\alpha)$. Then, there exists a Toeplitz subshift $(Y,S)$ such that
\begin{enumerate}
    \item $\cM(Y,S)$ is affine homeomorphic to $K$. 
    \item $h(Y,S)=\log(\alpha)$.
    \item The complexity of $(Y,S)$ satisfies $\liminf \frac{g_n}{p_{Y}(n)}=0$. 
\end{enumerate}
\end{corollary}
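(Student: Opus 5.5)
The plan is to reduce the statement to \Cref{theo:main} through the realization of Choquet simplices by Bratteli diagrams with the equal path number property, and then to use the remark that the construction in \Cref{theo:main} yields a Toeplitz subshift whenever the starting SOE class contains an ERS Bratteli--Vershik system.

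\textbf{Step 1.} Given the Choquet simplex $K$, I invoke \Cref{lem:divisibleers}. I expect it to provide a simple dimension group with unit $(G,G^+,u)$ whose state space is affinely homeomorphic to $K$, realized by a simple, properly ordered Bratteli diagram $(V,E)$ with the equal path number property. This is the Gjerde--Johansen route: choose a divisible-type dimension group, e.g.\ $\Aff(K)$-based, and telescope/split the diagram so that the row sums are equal at each level. Let $(X,T)$ be the associated Bratteli--Vershik minimal Cantor system. Its invariant measures correspond to the states of $K_0(V,E)$, so $\cM(X,T)\cong K$.

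\textbf{Step 2.} I apply \Cref{theo:main} to $(X,T)$, $\alpha$ and $(g_n)$. This gives a pseudo-Toeplitz subshift $(Y,S)$ that is strong orbit equivalent to $(X,T)$, with $h(Y,S)=\log(\alpha)$ and $\liminf g_n/p_Y(n)=0$. Strong orbit equivalence preserves the dimension group with unit, and hence preserves the simplex of invariant measures, since the latter is the state space of $K_0$. Therefore $\cM(Y,S)\cong K$.

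\textbf{Step 3 (the main obstacle).} It remains to show that $(Y,S)$ is genuinely Toeplitz and not merely pseudo-Toeplitz. Here I go inside the proof of \Cref{theo:main}, which proceeds via \Cref{theo:pseudoers}. I check that when the input diagram $(V,E)$ is ERS, the modified diagram $(V',E')$ can be chosen ERS as well, meaning $\ell'_i=L'_i$ for all $i$. The reason is that the only source of unequal path numbers in the construction of \Cref{theo:pseudoers} is the initial row-sum discrepancy. If all $M_i$ have equal row sums, then the added multiplicities and the telescoping can be done uniformly across the vertices of each level. The subsequent substitution and labelling steps that produce an expansive Bratteli--Vershik representation do not change the diagram's path counts. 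An expansive Bratteli--Vershik system on an ERS diagram is conjugate to a Toeplitz subshift by Gjerde--Johansen. Hence $(Y,S)$ is Toeplitz, which finishes the proof. The delicate point is verifying that the entropy-tuning multiplicities in \Cref{theo:main} can be chosen constant across the rows of each level. If they are not, I would rebalance them by adding the missing edges to the low-row-sum vertices, noting that such $O(1)$-relative adjustments change neither the entropy nor the liminf complexity bound.
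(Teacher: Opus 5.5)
The overall architecture of your proposal matches the paper: realize $K$ through a divisible simple dimension group, pass within the SOE class to a system with an ERS Bratteli--Vershik representation via \Cref{lem:divisibleers}, apply \Cref{theo:main}, and conclude with Gjerde--Johansen (\Cref{teo:erstoeplitz}). One attribution in Step 1 is off. \Cref{lem:divisibleers} does not produce $G$ from $K$. That comes from Effros' realization (a countable dense $\QQ$-subspace of $\Aff(K)$ containing $1$) together with Herman--Putnam--Skau, which supplies a minimal Cantor system. The lemma only converts a class with divisible $G$ into one with an ERS representation.

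\textbf{The gap is in Step 3}, which is exactly what the corollary adds to \Cref{theo:main}. Your claim that the construction of \Cref{theo:pseudoers} stays ERS on ERS input is false. There each $u$ is split into $w_u^1,w_u^2$ with $A_i(w_u^1,u)=K_i^{(u)}$ and $A_i(w_u^2,u)=K_i^{(u)}+1$. These two vertices therefore carry $K_i^{(u)}\ell_u$ and $(K_i^{(u)}+1)\ell_u$ paths, which differ even when all $\ell_u$ coincide. The $K$/$K+1$ pair is forced by the B\'ezout step, not by any row-sum discrepancy of the input. Your fallback of ``adding the missing edges'' is also not admissible. It changes the incidence matrices, hence $K_0(V',E')$, so you would lose both the strong orbit equivalence with $(X,T)$ and $\cM(Y,S)\cong K$. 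Entropy and complexity are not what is at stake.

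\textbf{What actually works.} The entropy-producing step of \Cref{theo:main} is the splitting procedure of \Cref{sec:splitting}. Its matrix $A_i$ is a $0$--$1$ matrix with exactly one $1$ in each row, so every $v\in H_i^{(u)}$ has exactly $\ell_u$ paths from the top (\Cref{rem:preservesers2}). Telescoping and the choice of order do not change path counts either. That $q_i^{(u)}$ varies with $u$ is irrelevant, since ERS constrains path counts, not numbers of vertices, so no rebalancing is ever needed. The simplified splitting of \Cref{prop:unbounded} behaves the same way. This is the content of \Cref{rem:K0divisible}. Then $(Y,S)$ is a subshift conjugate to a Bratteli--Vershik system on an ERS diagram, hence expansive, hence Toeplitz.

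\textbf{The preliminary normalization.} Your instinct to examine the normalization that \Cref{theo:main} borrows from \Cref{theo:pseudoers} is sound, since that step does destroy ERS and \Cref{rem:K0divisible} does not address it. The fix is simply not to run that construction on an ERS input. ERS already implies pseudo-ERS. The required growth of the entries relative to the number of vertices can be arranged inside the proof of \Cref{lem:divisibleers} by taking the integers $s_i$ large. This works because $\widetilde M_{i-1}=k_i\,J_i'M_{i-1}J_{i-1}^{-1}$, and the positive rational matrix $J_i'M_{i-1}J_{i-1}^{-1}$ does not depend on $k_i$.
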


\begin{corollary}\label{cor:zeroentropymeasures}
Let $K$ be a Choquet simplex. Let $(g_n)_{n\in \N}$ be a sequence of positive real numbers which is subexponential. Then, there exists a zero-entropy Toeplitz subshift $(Y,S)$ whose complexity $p_Y$ satisfies $\liminf \frac{g_n}{p_{Y}(n)}=0$ and whose set of invariant measures $\cM(Y,S)$ is affine homeomorphic to $K$. 
\end{corollary}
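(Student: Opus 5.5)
The statement to prove is \Cref{cor:zeroentropymeasures}. I will reduce it to \Cref{cor:alphaentropymeasures} with $\alpha=1$, or, equivalently, to \Cref{theo:main} combined with \Cref{lem:divisibleers}. A subexponential sequence $(g_n)$ need not satisfy $\lim \log(g_n)/n = 0$ literally: it may be very small, so that $\log(g_n)/n$ tends to $0$ only from above, or even has negative limit inferior. The first step is therefore to dominate $g_n$. I will set $\tilde g_n=\max(g_n,1)$. Then $\limsup \log(\tilde g_n)/n\le 0$ and $\log(\tilde g_n)/n\ge 0$, hence $\lim \log(\tilde g_n)/n=0=\log(1)$. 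Moreover, any $Y$ with $\liminf \tilde g_n/p_Y(n)=0$ also satisfies $\liminf g_n/p_Y(n)=0$, since $g_n\le\tilde g_n$.

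Next I realize the simplex. By the Gjerde--Johansen theorem \cite{Gjerde_Johansen_Bratteli_Toeplitz:2000}, there is a Toeplitz subshift $(X,T)$ with $\cM(X,T)$ affine homeomorphic to $K$. Such a system admits a Bratteli--Vershik representation with the equal path number (ERS) property. I then apply \Cref{theo:main} to $(X,T)$, $\alpha=1$ and $\tilde g_n$. This yields a pseudo-Toeplitz $(Y,S)$ that is strong orbit equivalent to $(X,T)$, has $h(Y,S)=\log 1=0$, and satisfies $\liminf \tilde g_n/p_Y(n)=0$.

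Strong orbit equivalence preserves the dimension group with unit \cite{Giordano_Putman_Skau_Topological_orbit_equiv_crossed_products:1995}, hence it preserves the state space. Invariant measures of a minimal Cantor system correspond affinely and homeomorphically to states of $K_0$, so $\cM(Y,S)\cong\cM(X,T)\cong K$.

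The remaining point, and the main obstacle, is upgrading ``pseudo-Toeplitz'' to ``Toeplitz''. Here I use the remark in the introduction: when the starting SOE class contains an ERS Bratteli--Vershik system, the construction of \Cref{theo:main} outputs a Toeplitz subshift. \Cref{lem:divisibleers} is presumably the tool that certifies this, showing that the diagram produced by \Cref{theo:pseudoers} from an ERS diagram can be chosen ERS (divisibility of the path counts), so that the resulting expansive BV system is Toeplitz by the Gjerde--Johansen characterization. If that route were unavailable, I would instead invoke \Cref{cor:alphaentropymeasures} directly with $\alpha=1$ and $\tilde g_n$, which is exactly the present statement after the reduction in the first paragraph.
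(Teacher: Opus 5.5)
Your proposal is correct. Its fallback, invoking \Cref{cor:alphaentropymeasures} with $\alpha=1$, is literally the paper's one-line proof. Your main line differs from it in two ways.

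First, the replacement $\tilde g_n=\max(g_n,1)$ is a small but real improvement. If ``subexponential'' only means $\limsup\log(g_n)/n\le 0$, then the hypothesis $\lim\log(g_n)/n=0$ needed by \Cref{cor:alphaentropymeasures} and \Cref{theo:main} can fail, and the paper's ``take $\alpha=1$'' passes over this.

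Second, you obtain the ERS starting system differently. The paper chooses a divisible subgroup $G\subset\Aff(K)$ containing $1$ and realizes $(G,G^+,1)$ by a minimal Cantor system. It then uses \Cref{lem:divisibleers} to find an ERS representative of that SOE class. You instead take a Toeplitz system realizing $K$ (\Cref{theo:noruegos} or \Cref{theo:dow}) and read off an ERS representation directly from \Cref{teo:erstoeplitz}. Your route is shorter and makes \Cref{lem:divisibleers} unnecessary here. The paper's route yields the more general \Cref{rem:K0divisible}: any divisible dimension group gives Toeplitz output.

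What you should fix is your explanation of the Toeplitz upgrade. \Cref{lem:divisibleers} has nothing to do with \Cref{theo:pseudoers}. It only produces an ERS diagram in the SOE class of a divisible dimension group, which in your route you already have.

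The paper's justification is \Cref{rem:K0divisible}, which runs in three steps. In the splitting procedure the matrices $A_i$ are $0/1$ with exactly one parent per new vertex, so path counts are copied and ERS is preserved (\Cref{rem:preservesers2}). The output Bratteli--Vershik system is conjugate to a subshift by \Cref{cor:suffconditions_isom}, hence expansive. Then \Cref{teo:erstoeplitz} gives Toeplitz.

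Your instinct that the preliminary step is where care is needed is nonetheless sound. \Cref{theo:pseudoers}, invoked at the start of the proof of \Cref{theo:main}, splits each $u$ into two vertices with $K_i^{(u)}\ell_u$ and $(K_i^{(u)}+1)\ell_u$ paths. As written, it therefore does not preserve ERS. So the Toeplitz conclusion, in your route and in the paper's alike, tacitly requires performing that preliminary normalization in an ERS-preserving way when the input diagram is ERS. \Cref{rem:K0divisible} only addresses the splitting step.
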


\subsection{Organization.} \Cref{sec:preliminaries} is devoted to recalling basic notions on minimal Cantor systems, symbolic dynamics, strong orbit equivalence, dimension groups, factor complexity and entropy. In \Cref{sec:BDandSadic} we introduce the main theoretical tools we use to prove our results, namely Bratelli diagrams, Bratteli-Vershik systems and their associated $S$-adic subshifts. In \Cref{sec:splitting} we describe in detail the technique we use to modify Bratteli diagrams in order to create entropy while keeping the dimension group, which we call the {\em splitting procedure}. Finally, \Cref{sec:proofs} is devoted to proving the main results and their corollaries. 

\subsection*{AI disclosure}
All the mathematical content of this paper was done by the authors. Prior to submitting the paper, an AI tool was used to correct typos and identify potential errors. 

\section{preliminaries}\label{sec:preliminaries}

\subsection{Minimal Cantor systems}\label{subsec:minimalcantor}
Throughout this text, a {\em (topological) dynamical system} is a pair $(X,T)$, where $X$ is a non-empty compact metric space and $T$ is a homeomorphism acting on $X$. 
We say that $(X,T)$ is a {\em Cantor system} if $X$ is a Cantor space (non-empty, compact, totally disconnected and without isolated points). The system is said to be {\em minimal} if $X$ contains no non-trivial $T$-invariant closed subsets, or, equivalently, if for each $x\in X$, the orbit $\{T^n(x): n\in \Z\}$ is dense in $X$.

Given a topological dynamical system $(X,T)$, an {\em invariant measure} of $(X,T)$ is a Borel probability measure on $X$ such that for every Borel subset $A\subseteq X$, $\mu(T^{-1}(A))=\mu(A)$. The set of all invariant measures of $(X,T)$ is denoted $\cM(X,T)$. We say that a measure $\mu\in\cM(X,T)$ is {\em ergodic} if whenever $T(A)=A$ for some Borel set $A\subseteq X$, either $\mu(A)=0$ or $\mu(A)=1$. The system $(X,T)$ is called {\em uniquely ergodic} if $\cM(X,T)$ consists of a single element. The set $\cM(X,T)$ is known to be non-empty and a {\em Choquet simplex} \cite{Bogoliouboff_Kryloff_1937}, that is, a compact convex metrizable subset $K$ of a locally convex real vector space such that for each $v\in K$ there is a unique probability measure $m$ supported on the extreme points of $K$ with $\int_{ext(K)}xdm(x)=v$. The extreme points of $\cM(X,T)$ correspond to the ergodic measures on $X$ (see for instance \cite{Glasner_ergodic_theory_joinings:2003}). Here, $\cM(X,T)$ is considered as a subspace of the topological dual of $C(X,\R)$ endowed with the weak$^\star$ topology.

\subsection{Subshifts, factor complexity and topological entropy}\label{subsec:subshifts}

A very important class of Cantor systems is given by subshifts. Let $\cA$ be a finite set of symbols with cardinality $|\cA|$ (called an {\em alphabet}). The space $\cA^{\Z}$ of sequences with symbols in $\cA$ endowed with the product topology is a Cantor space provided $|\cA|\geq 2$. Consider the {\em (left) shift transformation} $S$ on $\cA^{\Z}$ given by $S((x_n)_{n\in\Z}))=(x_{n+1})_{n\in\Z}.$
If $X\subseteq \cA^{\Z}$ is any closed and shift-invariant subset, the dynamical system $(X,S|_X)$, endowed with the induced topology, is denoted $(X,S)$ and is called a {\em subshift on $\cA$}. Finite concatenations of symbols in $\cA$ are called {\em words}. The empty word is denoted $\varepsilon$. Given a subshift $(X,S)$, a {\em factor} of $x\in X$ is a finite word appearing in $x$, that is, a word $w=x_kx_{k+1}\cdots x_{n}$ for some $n\geq k$ integers. The {\em language} of a sequence $x\in X$ is defined as the set of all factors of $x$ and is denoted $\cL_x$. The {\em language} $\cL_X$ of the subshift is the union $\cup_{x\in X}\cL_x$.\\   A sequence $x\in X$ is said to be {\em uniformly recurrent} if every factor of $x$ appears in an infinite number of positions separated by bounded gaps, and one has that a subshift is minimal if and only if every element of $X$ is uniformly recurrent, in which case $\cL_x=\cL_y$ for all $x,y\in X$ (see for instance \cite{Queffelec1987}).\\
Given an alphabet $\cA$, a sequence $x\in \cA^\Z$ is called a {\em Toeplitz sequence} if for all $k\in \N$ there exists $p\in \N$ such that $x_k=x_j$ whenever $k=j  \mod p$. A subshift $(X,S)$ is called a {\em Toeplitz subshift} if $X$ is the orbit closure under the shift transformation of some Toeplitz sequence. Toeplitz sequences are uniformly recurrent, which implies that Toeplitz subshifts are minimal (see for instance \cite{Downarowicz2005}). The following result shows that any Choquet simplex can be seen as the set of invariant measures of a Toeplitz subshift.
\begin{theorem}\cite[Theorem 5]{Downarowicz_Choquet_invariant_measures:1991}\label{theo:dow}
    Let $K$ be a Choquet simplex. There exists a Toeplitz subshift $(X,S)$ such that $\cM(X,S)$ is affine homeomorphic to $K$.
\end{theorem}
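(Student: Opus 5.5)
The statement is Downarowicz's theorem (\Cref{theo:dow}): every Choquet simplex $K$ is affinely homeomorphic to $\cM(X,S)$ for some Toeplitz subshift $(X,S)$. I would prove it through the Bratteli--Vershik machinery of \Cref{sec:BDandSadic}, relying on three standard facts.
\begin{enumerate}
\item Every Choquet simplex arises as the state space of a simple dimension group. By Effros--Handelman--Shen, every metrizable Choquet simplex $K$ is affinely homeomorphic to the state space $S(G,G^+,u)$ of some simple dimension group with unit $(G,G^+,u)$.
\item That dimension group is realized by a simple Bratteli diagram, and its state space corresponds to the invariant measures. By Herman--Putnam--Skau, $(G,G^+,u)$ is realized as $K_0(V,E)$ for a simple, properly ordered Bratteli diagram. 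Invariant measures of the associated Vershik map correspond affinely and homeomorphically to states of $K_0$.
\item Toeplitz subshifts are the expansive Bratteli--Vershik systems with the equal path number (ERS) property (Gjerde--Johansen).
\end{enumerate}

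So the task reduces to modifying $(V,E)$, without changing the ordered $K_0$ group with unit, into an ERS diagram with an ordering whose Vershik map is expansive.

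\textbf{Step 1: achieving ERS.} I would telescope and then replace each level by a suitable one, so that the number of paths from the top vertex to every vertex of level $i$ is a single number $L_i$ with $L_{i-1}\mid L_i$. The natural route is Gjerde--Johansen's construction, which makes the row sums of the incidence matrices constant. Choosing the incidence matrices carefully, with each row sum equal to a common value $q_i$, amounts to finding nonnegative integer matrices $M_i$ that implement the same inductive limit up to isomorphism. Here I would use the freedom in the choice of the Effros--Handelman--Shen realization: approximate the affine-function picture $\Aff(K)$ by dyadic or rational approximations along levels, so that the unit $u$ corresponds to the all-$L_i$ vector at each level. Concretely, I would pick the finite-dimensional simplices approximating $K$ as projective limits, and choose the connecting maps as stochastic-like integer matrices scaled by $L_i/L_{i-1}$.

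\textbf{Step 2: the ordering and expansiveness.} I would take all entries of the incidence matrices to be at least $2$ after telescoping. I would then fix an ordering in which every vertex has the same minimal and the same maximal incoming edge source, so that the ordering is proper. The ordering is further chosen so that the $S$-adic words $\tau_i(v)$ are pairwise distinct and recognizable. This gives a subshift conjugate to the Vershik system, and the ERS property then yields a Toeplitz sequence, because the word of length $L_i$ repeats periodically with period $L_i$ at the skeleton positions.

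\textbf{Main obstacle.} The hardest step is Step 1: realizing an \emph{arbitrary} simplex with a diagram whose row sums are constant while keeping the unit. I would first try to construct the diagram directly from a projective sequence of finite-dimensional simplices $\Delta_{n}$ with affine surjections. Each $\Delta_n$ is given rational barycentric maps with common denominators, whose transposes are integer matrices with constant row sums $L_i/L_{i-1}$. The expected difficulty is controlling those denominators while still having the projective limit equal to $K$; I would handle it by approximating the maps by rational ones with sufficiently fast convergence.
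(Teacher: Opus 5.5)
There is a genuine gap, and it is in your Step 1, which carries essentially all of the content of the theorem. (The paper does not prove this statement; it cites it and notes that the cited Gjerde--Johansen theorem generalizes it. Its own \Cref{cor:zeroentropymeasures} reproves a stronger version.)

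First, the reduction as you state it fails for a general Effros--Handelman--Shen realization. In an ERS diagram the unit is represented at level $i$ by $(L_i,\dots,L_i)=L_i(1,\dots,1)$. Hence $u\in L_iG$ for every $i$, with $L_i\to\infty$. Suppose your realization is, say, $G=\Z+\theta\Z\subset\R$ with $u=1$ and $\theta$ irrational, whose state space is a point. Then $1\notin 2G$, so no Bratteli diagram with this $K_0$ is ERS, however you modify it. The group $G$ must therefore be chosen with this divisibility built in, and you only gesture at this.

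Second, your way of building it does not preserve the limit by itself. You propose to approximate the real row-stochastic bonding maps of a projective system of simplices by rational ones ``with sufficiently fast convergence''. But replace identity bonding maps on a $1$-simplex by $\begin{pmatrix}1-\epsilon_n&\epsilon_n\\ \epsilon_n&1-\epsilon_n\end{pmatrix}$ with $\epsilon_n=1/(n+2)\to 0$. Since $\prod_n(1-2\epsilon_n)=0$, the limit collapses to a point. Making this work in general needs an approximate-intertwining argument with telescoping, while also arranging simplicity. None of this is supplied.

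The missing idea, which makes Step 1 routine and is exactly what the paper uses in its corollaries, is to choose $G$ well from the start. Take $G$ to be a countable dense $\QQ$-linear subspace of $\Aff(K)$ containing $1$, ordered by strict positivity. It is a simple dimension group with state space $K$ (Effros), and it is divisible.

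For any Bratteli realization $(M_i)$ of such a $G$, conjugate by positive rational diagonal matrices, $\widetilde M_{i-1}=J_iM_{i-1}J_{i-1}^{-1}$; this equalizes the row sums. Scaling $J_i$ so that $\widetilde M_{i-1}$ has integer entries divisible by $i$ preserves the ordered group with unit (\Cref{lem:divisibleers}). In your language, this gives rational row-stochastic bonding matrices whose limit is exactly $K$, with no approximation at all.

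Your Step 2 is also only asserted.
\begin{itemize}
\item Making every $\tau_i(v)$ begin with $v_{\min}$ and end with $v_{\max}$ does not determine the cutting points unless these letters occur nowhere else. The paper puts all edges from $v_{\min}^i$ into a prefix $(v_{\min}^i)^j$ and the unique edge from $v_{\max}^i$ last, so that $v_{\max}^iv_{\min}^i$ marks exactly the cutting points.
\item Injectivity on letters requires counting the available orderings against the number of vertices sharing an incidence vector. This is \Cref{lem:recognizable2}, obtained via a splitting that preserves ERS by \Cref{rem:preservesers2}.
\end{itemize}
Alternatively, since $\cM$ is an SOE invariant, you can skip expansiveness entirely. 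Sugisaki's theorem says every ERS Bratteli--Vershik system is SOE to a Toeplitz flow.
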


The {\it factor complexity function} or simply the {\it complexity} of a subshift $(X,S)$ is the function $p_X\colon \N\to\N$ given by the number of factors of length $n$ in $\cL_X$. From the subadditivity of $\log(p_X(n))$ and Fekete's lemma, the sequence $(\log(p_X(n))/n)$ converges to a real number which is denoted $h(X,S)$ and called the {\em topological entropy} of $(X,S)$. The number $h(X,S)$ in fact corresponds to the topological entropy defined for any topological dynamical system, see \cite[Chapter 7]{Walters82} for details.\\
The asymptotic behavior of the complexity function is known to restrict the Choquet simplex of invariant measures of a minimal subshift. More precisely, the following result tells us that for non-superlinear complexities, minimal subshifts can only support a finite number of ergodic invariant measures.
\begin{theorem}\cite[Corollary 1.3]{Boshernitzan:1984}
    Let $(X,S)$ be a minimal subshift and let $p_X$ denote the factor complexity function of $X$. If
$$\liminf_{n\to\infty}\frac{p_X(n)}{n}<\alpha<+\infty,$$
then the number of ergodic invariant probability measures of $(X,S)$ is at most $\max(\lfloor\alpha\rfloor,1)$, where $\lfloor\alpha\rfloor$ denotes the greatest integer which is $\leq \alpha$.
\end{theorem}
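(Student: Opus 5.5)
Since this is a cited result (\cite[Corollary 1.3]{Boshernitzan:1984}), the natural plan is to reconstruct Boshernitzan's argument through Rauzy graphs. Suppose $(X,S)$ is minimal with $\liminf p_X(n)/n<\alpha$, and suppose it carries $k$ distinct ergodic measures $\mu_1,\dots,\mu_k$. We aim to show $k\leq \max(\lfloor\alpha\rfloor,1)$. The case $\alpha\leq 2$ should reduce to the same argument, so we treat a general $\alpha$.

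\textbf{Step 1: few special words.} For each $m$, let $s(m)=p_X(m+1)-p_X(m)\geq 1$. This is bounded below by the number of right special words of length $m$, and it bounds the total excess out-degree of the Rauzy graph $G_m$. The vertices of $G_m$ are the words of length $m$ in $\cL_X$, and its edges are the words of length $m+1$. Telescoping gives $\sum_{m<n}s(m)=p_X(n)-p_X(1)<\alpha n$ along a sequence $n_j\to\infty$. From this we want infinitely many $m\to\infty$ with $s(m)\leq\lfloor\alpha\rfloor$, and moreover $m$ comparable to $n_j$ so that $p_X(m)=O(m)$. A pigeonhole over the window $m\in[\varepsilon n_j,n_j]$ almost gives this: the average of $s$ there is below $\alpha/(1-\varepsilon)$. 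Since $s$ is integer valued, for $\varepsilon$ small this average is below $\lfloor\alpha\rfloor+1$ as soon as $\alpha\notin\N$. If $\alpha\in\N$, we first shrink $\alpha$ slightly, which is allowed because the hypothesis is a strict inequality.

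\textbf{Step 2: decompose $G_m$.} For such $m$, contract $G_m$ into a graph whose branching vertices are the right special words. This graph has at most $s(m)$ branch points and, by the degree count, at most $C\,s(m)$ maximal non-branching paths (``bridges''). Each bridge has length at most $p_X(m)=O(m)$.

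\textbf{Step 3: pass to the limit.} Take a $\mu_i$-generic point $x_i$ for each $i$. A long orbit segment of $x_i$ is a walk in $G_m$, hence a concatenation of bridges. The frequency of any word $w$ with $|w|\ll m$ under $\mu_i$ is therefore determined, up to an error $O(|w|/m)$, by the vector of bridge-visit frequencies. So each $\mu_i$, restricted to cylinders of length $\ll m$, is approximated by a convex combination of the measures supported on the ``circuits'' of the contracted graph. The rank of the circuit space is at most $s(m)$, because the cycle space of a graph with $s(m)$ branch points and out-degree excess $s(m)$ has controlled dimension. Letting $m\to\infty$ along the subsequence and using weak$^*$ compactness, all $\mu_i$ lie in a face of $\cM(X,S)$ spanned by at most $\lfloor\alpha\rfloor$ limit measures. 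Ergodic measures are extreme points, so $k\leq\lfloor\alpha\rfloor$.

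\textbf{Main obstacle.} The hard part is the sharp counting in Steps 1--2, namely obtaining exactly $\lfloor\alpha\rfloor$ rather than $C\alpha$. The difficulty is twofold:
\begin{itemize}
\item relating $s(m)$ to the dimension of the space of invariant weightings on $G_m$ without losing constants;
\item ensuring that the limit step does not create extra extreme points.
\end{itemize}
For the first point, I would use the identity that the flow (cycle) space of a strongly connected graph has dimension $|E|-|V|+1=s(m)+1$. I would then exploit that the invariant probability weightings form a simplex of dimension $s(m)$, hence have at most $s(m)$ extreme points. This reduces everything to the bound $s(m)\leq\lfloor\alpha\rfloor$ from Step 1.
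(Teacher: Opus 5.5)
The paper only quotes this result from Boshernitzan, so there is no in-paper proof to compare with. On its own terms, your proposal has a genuine gap, and it sits exactly where you flag the main obstacle.

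The claim that the normalized invariant weightings on $G_m$ form a simplex of dimension $s(m)$ with at most $s(m)$ extreme points is false on two counts. First, the set of normalized nonnegative circulations on $G_m$ is a polytope of dimension $s(m)$ whose vertices are the normalized simple cycles, and in general there are more than $s(m)+1$ of them. For example, two vertices joined by two disjoint paths in each direction give four simple cycles but only a three-dimensional cycle space. Second, even a simplex of dimension $s(m)$ has $s(m)+1$ vertices, not $s(m)$.

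The face argument in Step 3 fails for a related reason. Each $\mu_i$ is a convex combination of cycle measures, but which cycles are used depends on $i$ and on $m$, so no common family of $\lfloor\alpha\rfloor$ limit measures emerges.

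What the Rauzy-graph linear algebra does give correctly is the following. For $m$ large, the restrictions of $k$ distinct ergodic measures to cylinders of length $m+1$ are linearly independent circulations on $G_m$. Hence $k\le s(m)+1\le\lfloor\alpha\rfloor+1$.

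This is off by exactly one, and that one is the whole content of the theorem. The statement is equivalent to saying that $k\geq 2$ distinct ergodic measures force $\liminf p_X(n)/n\geq k$. The circulation count only yields $\liminf p_X(n)/n\geq k-1$. Concretely, for $\alpha\in(1,2)$ the statement asserts unique ergodicity, while your argument gives at most two ergodic measures.

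Sturmian shifts show that linear algebra alone cannot close this gap. Every Sturmian Rauzy graph has a two-dimensional circulation space, yet the shift is uniquely ergodic. So you need a new ingredient that genuinely exploits $p_X(m)=O(m)$ at the selected scales. Your Step 1 produces this bound, but Steps 2--3 use it only to bound bridge lengths. Supplying that ingredient is what Boshernitzan's argument does, and your proposal does not.

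Note also that your reduction for integer $\alpha$ goes in the wrong direction. If $\alpha$ is an integer, $\liminf p_X(n)/n<\alpha$ already gives $s(m)\le\alpha-1$ for infinitely many $m$. Then the corrected count $k\le s(m)+1$ yields the claimed bound $\alpha$. Shrinking $\alpha$ to a non-integer replaces this by the sharp bound $\alpha-1$, which is precisely the case you cannot reach.
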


On the other hand, as the following result illustrates, topological entropy does not restrict the simplex of measures. This corresponds to the first part of \cite[Theorem 12]{Gjerde_Johansen_Bratteli_Toeplitz:2000}.

\begin{theorem}\cite[Theorem 12]{Gjerde_Johansen_Bratteli_Toeplitz:2000}\label{theo:noruegos}
    Let $K$ be any Choquet simplex. There exists a zero-entropy Toeplitz subshift $(X,S)$ such that K is affinely homeomorphic to the set $\cM (X, S)$ of invariant probability measures.
\end{theorem}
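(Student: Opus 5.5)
The plan is to start from \Cref{theo:dow}, which already gives a Toeplitz subshift whose simplex of invariant measures is $K$, and then modify it so that the simplex is kept and the entropy becomes zero. We rely on two facts used later in the paper. First, Toeplitz subshifts are, up to conjugacy, the expansive Bratteli--Vershik systems whose diagram has the equal path number (ERS) property. Second, the simplex $\cM$ of a Bratteli--Vershik system depends only on the diagram modulo telescoping: it is the state space of the associated dimension group, equivalently the inverse limit of the cones determined by the incidence matrices.

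\textbf{Step 1: a suitable ERS diagram.} From the Toeplitz subshift of \Cref{theo:dow}, take an ERS Bratteli diagram $(V,E)$ with a proper ordering whose Bratteli--Vershik system represents this subshift. Its invariant measures are then affinely homeomorphic to $K$.

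\textbf{Step 2: telescoping.} Let $\ell_n$ denote the common number of paths from the top vertex to a vertex of $V_n$. Telescoping along a subsequence of levels has three relevant effects:
\begin{itemize}
\item it preserves the ERS property, since path numbers simply restrict to the chosen levels;
\item it preserves $K_0$, hence the simplex of measures;
\item it preserves the proper ordering.
\end{itemize}
Moreover, the number of vertices at the retained level $n_k$ is $|V_{n_k}|$, while $\ell_{n_k}$ can be made as large as we wish by choosing $n_k$ later. We therefore choose the subsequence so that
\[
\frac{\log \ell_{n}+2\log|V_{n}|}{\ell_{n}}\longrightarrow 0 \quad\text{as } n\to\infty.
\]
Since $(\log \ell_n)/\ell_n\to0$ automatically, it suffices to have $\ell_{n}\geq n\log|V_{n}|$.

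\textbf{Step 3: symbolic realization.} Using the usual $S$-adic coding of the ordered diagram, we realize the system as a subshift with morphisms $\sigma_n\colon V_{n+1}\to V_n^*$. We then check expansiveness, so that the resulting Bratteli--Vershik system is conjugate to a Toeplitz subshift by Gjerde--Johansen's characterization. If expansiveness fails, we first apply the standard modification that makes the level-one coding injective, for instance by splitting the first level with distinct edge labels. This step changes neither the dimension group nor the ERS property.

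\textbf{Step 4: the entropy estimate.} Every word of length $\ell\le \ell_n$ in the language appears inside a concatenation $\tau(a)\tau(b)$ of two level-$n$ return words, where $a,b\in V_n$ and each $\tau(\cdot)$ has length $\ell_n$. Counting the possible pairs $(a,b)$ and starting positions gives
\[
p_X(\ell_n)\le \ell_n|V_n|^2 .
\]
By Fekete's lemma $h(X,S)=\lim_\ell \log p_X(\ell)/\ell$, so it is enough to evaluate this limit along $\ell=\ell_n$. The choice made in Step 2 then gives $h(X,S)=0$.

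\textbf{Expected obstacle.} The main difficulty is Step 3: ensuring that the telescoped and, if necessary, modified ordered ERS diagram still yields an expansive Bratteli--Vershik system, so that it is genuinely a Toeplitz subshift, without altering the simplex. I would handle this by working directly with the Toeplitz sequence from \Cref{theo:dow}. Its period structure gives the ERS diagram together with an injective coding, and telescoping corresponds simply to passing to a subsequence of the periods, which preserves both the subshift and its Toeplitz nature.
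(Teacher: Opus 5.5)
Your Step 2 cannot work, and the whole argument depends on it. Telescoping an ordered Bratteli diagram gives a Bratteli--Vershik system canonically conjugate to the original one, and the re-coding in Step 3 is again a conjugacy. You note this yourself at the end: telescoping ``preserves the subshift''. So the subshift you end with has exactly the entropy of the Toeplitz subshift given by \Cref{theo:dow}. That theorem says nothing about entropy, and the zero-entropy requirement is precisely what \Cref{theo:noruegos} adds to it.

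Concretely, the claim that you can choose levels with $\ell_n\geq n\log|V_n|$ is unjustified. Taking $n_k$ later makes $\ell_{n_k}$ large, but $|V_{n_k}|$ is fixed by the same index and may grow exponentially in $\ell_{n_k}$. Your own Step 4 shows this is a real obstruction. Since $h(X,S)=\inf_\ell \log p_X(\ell)/\ell$, the count $p_X(\ell_n)\leq \ell_n|V_n|^2$ at any level $n$ gives $2\log|V_n|\geq h(X,S)\,\ell_n-\log\ell_n$. Hence if $h(X,S)>0$, then $\log|V_n|/\ell_n$ is bounded away from $0$, and no choice of levels satisfies your requirement. Step 2 is therefore either vacuous (the entropy was already zero) or impossible. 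The obstacle you flag, expansiveness, is not where the difficulty lies when you start from a Toeplitz subshift.

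The missing idea is that you must actually change the system, and in general its dimension group (i.e.\ its strong orbit equivalence class), while keeping only the trace simplex. One way is to replace the incidence matrices $M_n$ of your Step 1 ERS diagram by $c_nM_n$, with integers $c_n\geq 1$ chosen inductively.
\begin{itemize}
\item This keeps the diagram simple and ERS.
\item It leaves the trace space unchanged: sequences $\tilde p_n\in\R_+^{|V_n|}$ with $\tilde p_n=c_nM_n^{T}\tilde p_{n+1}$ and $\tilde p_0=1$ correspond affinely and homeomorphically to sequences with $p_n=M_n^{T}p_{n+1}$ and $p_0=1$, via $p_n=(\prod_{k<n}c_k)\,\tilde p_n$.
\item The new path numbers are $\tilde\ell_{n+1}=c_nr_n\tilde\ell_n$, where $r_n$ is the common row sum of $M_n$, while $|V_{n+1}|$ is unchanged. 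So $c_n$ can be taken large enough that $\log|V_{n+1}|/\tilde\ell_{n+1}\to 0$.
\end{itemize}
Only then does expansiveness become a genuine issue. The rescaled diagram must be given a proper order whose Bratteli--Vershik system is expansive, hence Toeplitz by \Cref{teo:erstoeplitz}. This can be done by a recognizability argument in the spirit of \Cref{lem:recognizable2}, using the large entries to make the read morphisms proper and injective on letters. After that, your Step 4 count does give zero entropy.

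The paper itself only cites this theorem from Gjerde--Johansen. Within the paper it also follows from \Cref{theo:simplexprevious} with $p_n=n^2$.
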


\Cref{theo:noruegos} is a generalization of \Cref{theo:dow}. In \cite[Theorem 1.4]{Cecchi_Donoso_SOE_superlinear_complexity:2022} we show that this generalization can be refined, by showing that for any Choquet simplex $K$ there exists a Toeplitz subshift whose set of invariant measures is affine homeomorphic to $K$ and whose complexity is arbitrarily close to linear. More precisely, we prove the following result.

\begin{theorem}\cite[Theorem 1.4]{Cecchi_Donoso_SOE_superlinear_complexity:2022}\label{theo:simplexprevious}
    Let $K$ be a Choquet simplex. Let $(p_n)_{n\in \N}$ be a sequence of positive real numbers such that $ \lim n/p_n=0$. Then, there exists a Toeplitz subshift $(X,S)$ whose complexity $p_X$ satisfies $\lim p_{X}(n)/p_n=0$
and whose set of invariant measures $\cM(X,S)$ is affine homeomorphic to $K$. 
\end{theorem}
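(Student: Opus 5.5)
The plan is to reduce to a realization problem in a fixed strong orbit equivalence class and then control complexity by telescoping. First, by \Cref{theo:noruegos} (or \Cref{theo:dow}) there is a Toeplitz subshift $(X_0,S)$ with $\cM(X_0,S)$ affinely homeomorphic to $K$. By the Gjerde--Johansen characterization, $(X_0,S)$ is conjugate to an expansive Bratteli--Vershik system on a simple, properly ordered Bratteli diagram $(V,E)$ with the ERS property; let $h_i$ denote the common height of the towers at level $i$. Two facts are used throughout. The simplex of invariant measures is determined by the dimension group $K_0(V,E)$ (it is its state space), so it is preserved under strong orbit equivalence. Moreover, any modification of the diagram that keeps $K_0$, ERS and properness yields a Toeplitz subshift, again via Gjerde--Johansen, provided we can also secure expansiveness. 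It therefore suffices to find, within the SOE class of $(X_0,S)$, an ERS proper diagram whose associated $S$-adic subshift has complexity $o(p_n)$.

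The second step is a complexity estimate of the form
$$p_X(n)\le c_i\, n \qquad \text{for } h_i\le n\le h_{i+1},$$
where $c_i$ depends only on the combinatorics of levels $i$ and $i+1$ (the number of vertices and of distinct two-letter words appearing at those levels), and not on the heights. To obtain it, write the subshift $S$-adically with constant-length morphisms $\tau_i$ (constant length comes from ERS). A factor of length $n\ge h_i$ contains a cut point between two level-$i$ words. Such a factor is determined by the offset of its first cut, which gives at most $h_i\le n$ choices, together with the sequence of level-$i$ blocks it crosses. This sequence is read inside a bounded number of consecutive level-$(i+1)$ words.

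The main obstacle is this second count. A naive count multiplies by the length of $\tau_{i+1}$ and gives $h_{i+1}$ instead of $n$. I would handle it in two ways. First, I would make each $\tau_{i+1}$ recognizable, with images that are pairwise distinct after a short, fixed prefix; this is achievable by a splitting or insertion step that keeps $K_0$ and the ERS property, as in \Cref{sec:splitting}. Then a window of $n$ symbols determines its level-$(i+1)$ context up to $O(|V_{i+1}|^2)$ choices once $n$ exceeds a fixed multiple of $h_i$. Second, for $n$ close to $h_{i+1}$, I would use the next level instead. Together these give a constant $c_i$ depending only on $|V_i|$, $|V_{i+1}|$ and the fixed prefix length.

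Finally, I would choose the telescoping inductively. Since $n/p_n\to 0$, for every $\varepsilon>0$ we have $p_n\ge (c/\varepsilon)\, n$ for all large $n$, whatever the constant $c$. Having fixed levels up to $i$, the constant $c_i$ is determined by data that no further telescoping of the diagram beyond level $i+1$ can increase: telescoping only merges matrices and keeps vertex sets bounded by the chosen levels. So I would pick the next level index far enough out that $h_i$ is large enough to give $p_n\ge c_i n/\varepsilon_i$ for all $n\ge h_i$, with $\varepsilon_i\to 0$. This yields $p_X(n)/p_n\le \varepsilon_i$ on $[h_i,h_{i+1}]$, hence $\lim p_X(n)/p_n=0$.

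Telescoping preserves $K_0$, the ERS property and properness. Expansiveness, and hence the Toeplitz property and conjugacy to a subshift, is ensured by the recognizability step. Therefore $\cM(X,S)\cong K$.
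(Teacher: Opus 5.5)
The paper does not reprove this statement; it quotes it from earlier work. The route it indicates is essentially your reduction: an ERS representation in an SOE class whose trace space is $K$ (cf.\ \Cref{lem:divisibleers}), then a complexity-controlled, ERS-preserving realization in that class, then \Cref{teo:erstoeplitz}. That part of your proposal is fine. The gap is in the complexity estimate, which is the entire content of the theorem.

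\textbf{The estimate $p_X(n)\le c_i n$ fails for a general order.} You claim this bound on $[h_i,h_{i+1}]$ with $c_i$ depending only on $|V_i|$ and $|V_{i+1}|$. By ERS, $p_X(n)\le h_i\,p^{(i)}(\lceil n/h_i\rceil+1)$, where $p^{(i)}(m)$ counts length-$m$ words of $L^{(i)}_{\bt}$, i.e.\ factors of the words $\tau_i(ab)$, each $\tau_i(a)$ having length $h_{i+1}/h_i$. The size of $p^{(i)}(m)$ is governed by how the letters are arranged inside $\tau_i(a)$, that is, by the order on the edges, not by vertex counts.

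Neither of your fixes addresses this. Recognizability is an injectivity property: it yields lower bounds on $p_X$, not upper bounds. Knowing the level-$(i+1)$ context up to $O(|V_{i+1}|^2)$ choices still leaves up to $h_{i+1}$ starting positions, each possibly giving a different window; that is exactly the naive count.

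Concretely, suppose the words $\tau_i(a)$ contain about $N=h_{i+1}/h_i$ distinct factors of some length $m=O(\log N)$. A generic order does this, and in this paper the free choice of the middle portions in the splitting of \Cref{sec:splitting} is used precisely to create such richness, hence entropy. Injectivity of $\tau_{[0,i)}$ on words then gives $p_X(mh_i)/(mh_i)\gtrsim N/(h_i\log N)$. This is unbounded as you telescope further with $h_i$ fixed.

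Note also that telescoping the Gjerde--Johansen diagram with its inherited order returns a system conjugate to $X_0$. So telescoping alone cannot lower complexity at all: the order has to be redesigned, and that design is the missing idea.

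\textbf{What is needed.} You need an order in which each $\tau_i(a)$ has very poor internal structure. For instance, group the edges by source in a fixed order common to all $a\in V_{i+1}$, so that $\tau_i(a)=b_1^{k_1(a)}\cdots b_r^{k_r(a)}$ with $b_1\neq b_r$. This is proper, cuts are recognized at the occurrences of $b_rb_1$, and injectivity on letters amounts to distinct rows of $M_i$, which an ERS-preserving splitting can arrange. Then every level-$i$ factor of length $m\le\min M_i$ meets at most one run boundary. It is therefore of the form $b_j^m$ or $b_j^sb_{j+1}^{m-s}$ (indices mod $r$), whatever its context. Hence $p^{(i)}(m)\le |V_i|\,m$ and $p_X(n)\le 3|V_i|\,n$ for $h_i\le n\lesssim h_i\min M_i$. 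For larger $n$ the bound picks up a factor $|V_{i+1}|^2$.

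\textbf{Circularity in your last step.} Your $c_i$ depends on $|V_{i+1}|$, which is known only after $t_{i+1}$ is chosen. Yet you need $p_n\ge c_in/\varepsilon_i$ from $n=h_i$ on, and $h_i$ is already fixed; choosing the next level further out does not change $h_i$. The way around this is to require the entries of $M_i$ to be large compared with a fast-growing function of $|V_{i+1}|$, as in condition (ii) of \Cref{theo:pseudoers}. Then the $|V_{i+1}|$-dependent constant only enters for $n\ge h_i\min M_i$, where $p_n/n$ is already as large as needed.
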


\Cref{theo:simplexprevious} is also a generalization of the following result by Cyr and Kra.

\begin{theorem}\cite[Theorem 1.1]{Cyr_Kra_erg_prop_zero_entropy:2020}\label{theo:cyrkra}
    If $(p_n)_{n\in\N}$ is a sequence of natural numbers such that 
$$\liminf_{n\to\infty}\frac{p_n}{n}=\infty,$$ 
then there exists a minimal subshift $(X,S)$ which supports uncountably many ergodic measures and is such that 
$$\liminf_{n\to\infty}\frac{p_X(n)}{p_n}=0.$$

\end{theorem}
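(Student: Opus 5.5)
The plan is to obtain this as a direct specialization of \Cref{theo:simplexprevious}, by choosing a Choquet simplex with uncountably many extreme points. First I check that the hypothesis on $(p_n)_{n\in\N}$ matches the one required there. The condition $\liminf_{n\to\infty} p_n/n=\infty$ means $p_n/n\to\infty$, since a liminf equal to $+\infty$ forces the limit to exist and equal $+\infty$. Because the $p_n$ are natural numbers and $p_n/n\to\infty$, they are eventually positive. We may therefore replace the finitely many initial terms by $1$, which changes neither of the asymptotic conditions, and assume $(p_n)$ is a sequence of positive reals with $\lim n/p_n=0$. This is exactly the hypothesis of \Cref{theo:simplexprevious}.

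Next I choose the simplex $K$. Take a compact metrizable space $Z$ with uncountably many points, for instance the Cantor set, and let $K$ be the space of Borel probability measures on $Z$ with the weak$^\star$ topology. This $K$ is a Bauer simplex, and hence a Choquet simplex in the sense recalled in \Cref{subsec:minimalcantor}. Its extreme points are the Dirac masses $\delta_z$, $z\in Z$, so there are uncountably many of them. Applying \Cref{theo:simplexprevious} to $K$ and $(p_n)$ yields a Toeplitz subshift $(X,S)$ with $\lim_{n\to\infty} p_X(n)/p_n=0$, and in particular $\liminf_{n\to\infty} p_X(n)/p_n=0$. Moreover, $\cM(X,S)$ is affinely homeomorphic to $K$.

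It remains to check the two qualitative properties. Minimality holds because Toeplitz subshifts are minimal, as recalled in \Cref{subsec:subshifts}. For the measures, an affine homeomorphism maps extreme points bijectively onto extreme points. The extreme points of $\cM(X,S)$ are exactly the ergodic measures, so $(X,S)$ supports uncountably many ergodic measures.

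The only step needing care is the choice of $K$: one must confirm that the measure space on an uncountable compact metric space is genuinely a Choquet simplex with the expected extreme boundary. This is classical (Bauer simplices), so I expect no real obstacle. All the substantive work is contained in \Cref{theo:simplexprevious}, which also gives the stronger conclusion that the ratio tends to $0$ and that $X$ can be taken Toeplitz.
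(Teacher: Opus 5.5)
Your proposal is correct and is essentially the deduction the paper has in mind. The paper gives no independent proof of this result: it is cited from Cyr and Kra, whose original argument is a direct construction, and the paper only remarks that it is a special case of \Cref{theo:simplexprevious}, which is exactly what you get by taking a Choquet simplex with uncountably many extreme points, such as the Bauer simplex of probability measures on the Cantor set. Your checks are all in order: the hypothesis reduction ($\liminf p_n/n=\infty$ implies $n/p_n\to 0$), minimality of Toeplitz subshifts, and the identification of extreme points with ergodic measures under an affine homeomorphism.
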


\begin{rem}\label{rem:generalizations}
    Note that both \Cref{cor:alphaentropymeasureslow} and \Cref{cor:alphaentropymeasures} are generalizations of \Cref{theo:simplexprevious} to the positive entropy regime.
\end{rem}

\subsection{Strong orbit equivalence and Dimension groups}\label{subsec:soeanddg}

Two dynamical systems $(X_1,T_1)$ and $(X_2,T_2)$ are {\it orbit equivalent} (OE for short) if there exists a homeomorphism $\phi:X_1 \to X_2$ such that for all $x \in X_1$,
$$\phi ( \{ T_1^n (x)  : n \in \Z \})= \{ T_2^n \phi (x) : n \in \Z\}.$$
Orbit equivalence between minimal systems determines the existence of two maps $n_1, n_2\colon X_1  \to \Z$ such that for all $x \in X_1$,
$$\phi \circ T_1 (x)= T_2^{n_1(x)} \circ \phi(x) \mbox { and }  \phi \circ T_1^{n_2(x)} (x)= T_2 \circ \phi (x).$$    
We say that $(X_1,T_1)$ and $(X_2,T_2)$ are {\it strong orbit equivalent} (SOE for short) if $n_1$ and $n_2$ both have at most one point  of discontinuity. 
It was shown in \cite{Boyle_top_orb_thesis:1983} that if $n_1$ (or $n_2$) is continuous, then the two systems are {\it flip conjugate}, that is, $(X_1,T_1)$ is either conjugate to $(X_2,T_2)$ or to $(X_2,T_2^{-1})$. When the spaces $X_1$ and $X_2$ are connected, OE and SOE are both the same as flip conjugacy. One of the most remarkable results on this subject is the complete characterization of SOE (resp. OE) for minimal Cantor systems by means of an algebraic invariant called the {\em dimension group} (resp. {\em reduced dimension group}) \cite{Giordano_Putman_Skau_Topological_orbit_equiv_crossed_products:1995}. Given a minimal Cantor system $(X,T)$, define the {\it coboundary map} $\beta\colon C(X,\Z)\to C(X,\Z)$ by $\beta (f)= f\circ T - f$, where $C(X,\Z)$ denotes the additive group of continuous functions from $X$ to $\Z$. The 
{\it dimension group} of $(X,T)$ is the following ordered group with unit
$$K^0(X,T)=(H(X,T), H^+(X,T), [1]),$$
where $H(X,T)=C(X,\Z)/\beta (C(X,\Z))$, $[\cdot]$ denotes the class modulo $\beta( C(X,\Z))$ of an element in $H(X,T)$, the positive cone $H^+(X,T)$ is the set of classes of non-negative functions and $1$ is the constant function equal to $1\in \Z$. It was proved in \cite{Herman1992} that the triple $K^0(X,T,\Z)$ is a {\em simple dimension group with unit} (see \cite{Durand_Perrin_Dimension_groups_dynamical_systems:2022} for more on abstract dimension groups). A {\em trace} of $K^0(X,T)$ is a group homomorphism $p:H(X,T)\to \R$ such that $p(H^+(X,T))\geq 0$ and $p([1])=1$. The {\em infinitesimal subgroup} of $K^0(X,T)$, denoted $\Inf(K^0(X,T))$, is the set of all classes $[f]\in H(X,T)$ such that $p([f])=0$ for every trace $p$. The quotient $H(X,T)/\Inf(K^0(X,T))$ with the induced positive cone also determines an ordered group with unit, which is called the {\em reduced dimension group} of the system, and is denoted $K^0(X,T)/\Inf(K^0(X,T))$. The following result is a characterization of OE and SOE in terms of the dimension group and reduced dimension group of a minimal Cantor system.

\begin{theorem}\cite[Theorem 2.1]{Giordano_Putman_Skau_Topological_orbit_equiv_crossed_products:1995}\label{theo:caracterizationsoe}
Let $(X_1,T_1)$ and $(X_2,T_2)$ be two minimal Cantor systems. Then, the following assertions hold,
\begin{itemize}
    \item $(X_1,T_1)$ and $(X_2,T_2)$ are SOE if and only if $K^0(X_1,T_1)$ and $K^0(X_2,T_2)$ are isomorphic.
    \item $(X_1,T_1)$ and $(X_2,T_2)$ are OE if and only if $K^0(X_1,T_1)/\Inf(K^0(X_1,T_1))$ and\\ $K^0(X_2,T_2)/\Inf(K^0(X_2,T_2))$ are isomorphic.
\end{itemize}
\end{theorem}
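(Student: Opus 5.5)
Since this is the Giordano--Putnam--Skau theorem, I would follow the Bratteli--Vershik route and split each item into its two directions. The plan is to translate everything into statements about Bratteli diagrams and tail equivalence. By Herman--Putnam--Skau, every minimal Cantor system $(X,T)$ is conjugate to a Bratteli--Vershik system $(X_{(V,E,\geq)},V_\geq)$ on a simple, properly ordered Bratteli diagram. For this model, $K^0(X,T)$ is order isomorphic, with unit, to the inductive limit $K_0(V,E)$ of the incidence matrices, with the unit given by the vector of path numbers. The isomorphism comes from sending the indicator of a Kakutani--Rokhlin tower base to the corresponding vertex. Moreover, the orbit of a point under $V_\geq$ coincides with its tail-equivalence class, except for the single orbit containing the unique maximal and minimal paths, where two tail classes are glued together.

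\emph{SOE implies isomorphic dimension groups.} Given $\phi$ with cocycles $n_1,n_2$ having at most one discontinuity, I would define $\Phi\colon C(X_2,\Z)\to C(X_1,\Z)$ by $f\mapsto f\circ\phi$. It suffices to check that $\Phi$ maps coboundaries of $T_2$ to coboundaries of $T_1$ (and symmetrically); positivity and the unit are then automatic.
\begin{itemize}
\item Where $n_1$ is continuous, $f\circ T_2^{n_1}\circ\phi - f\circ \phi$ is a finite telescoping sum of translates of $f\circ T_2-f$, hence a coboundary up to a locally constant correction.
\item At the one discontinuity point, I would choose a clopen Kakutani--Rokhlin partition refining the level sets of $n_1$ away from that point. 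On this partition the correction function has integral zero against every invariant measure and is supported in a tower, hence is itself a coboundary.
\end{itemize}

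\emph{Isomorphic dimension groups implies SOE.} This is the more constructive direction. Take diagrams $(V,E)$ and $(V',E')$ with order isomorphic unital $K_0$. By Elliott's intertwining argument, after telescoping both diagrams, an isomorphism of inductive limits lifts to a zig-zag diagram $(V'',E'')$ whose odd and even telescopings give the two original diagrams, and unit preservation forces the top-level path counts to match. The two diagrams then define the same tail equivalence relation on homeomorphic path spaces. I would then choose orderings on $E''$ that induce the given orders on each side, or conjugate orders, which is allowed since the Bratteli--Vershik system only depends on the diagram up to SOE once the order is proper. Under the homeomorphism, orbits correspond to tail classes. The orbit cocycle is then locally constant, since it is determined by finitely many edges, except at the single max/min path, which gives exactly one discontinuity.

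\emph{OE and reduced groups.} Since an orbit equivalence preserves the orbit relation, it induces an affine homeomorphism $\cM(X_1,T_1)\cong\cM(X_2,T_2)$. The reduced group embeds in $\Aff(\cM(X,T))$ via $[f]\mapsto(\mu\mapsto\int f\,d\mu)$ with positive cone given by strict positivity, so one direction is easy. The converse is the main obstacle. Two systems with isomorphic reduced groups need not have isomorphic $K^0$, so the diagrams cannot simply be intertwined. I would try to prove an absorption lemma: if $G$ and $G'$ differ only by infinitesimals, one can build a diagram whose tail relation contains a small extra ``AF-like'' piece on a closed thin set, and show by a back-and-forth construction that adding this piece does not change the orbit-equivalence class. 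I expect this step to require most of the work.
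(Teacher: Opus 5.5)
The paper does not prove this theorem; it only cites Giordano--Putnam--Skau. Taken on its own terms, your proposal has a genuine gap in the direction ``isomorphic $K^0$ $\Rightarrow$ SOE'', and it does not prove the converse of the OE item at all.

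\textbf{The zig-zag step.} In general there is no order on $E''$ that induces the given orders on both telescopings. Telescoping an ordered diagram does not change its Bratteli--Vershik system up to conjugacy, so such an order would force $(X_1,T_1)$ and $(X_2,T_2)$ to be conjugate. That is false in general: the present paper produces SOE systems with different entropies.

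Your fallback is that the Bratteli--Vershik system depends only on the unordered diagram up to SOE once the order is proper. That is exactly the statement to be proved. Concretely, after identifying the path spaces, both systems share the same tail relation $R$. But the first Vershik map glues the tail classes of $x_{\max},x_{\min}$ into one orbit, while the second glues those of $x'_{\max},x'_{\min}$. So the identity map is in general not even an orbit equivalence.

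What is missing is a homeomorphism $F$ of the path space such that:
\begin{itemize}
\item $F\times F$ maps $R$ onto $R$ homeomorphically for its inductive-limit topology;
\item $F(x_{\max})=x'_{\max}$ and $F(x_{\min})=x'_{\min}$.
\end{itemize}
Given such an $F$, your locally-constant-cocycle argument goes through, with the only discontinuity at $x_{\max}$. Constructing $F$ is the real content of this direction.

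\textbf{The OE item.} ``Isomorphic reduced groups $\Rightarrow$ OE'' is only a program. Nothing in your absorption idea explains how the infinitesimal part of $K^0$ is disposed of, and this is the deep part of the theorem.

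\textbf{The discontinuity in ``SOE $\Rightarrow$ isomorphic $K^0$''.} This also needs repair.
\begin{itemize}
\item Having integral zero against every invariant measure characterizes $\Inf$, not coboundaries. If it forced coboundaries, the two items of the theorem would coincide.
\item A finite Kakutani--Rokhlin partition cannot refine the level sets of a cocycle that is unbounded near its discontinuity.
\end{itemize}

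Here is a short correct argument.
\begin{enumerate}
\item Put $\psi=\phi^{-1}T_2\phi$, so that $\psi(x)=T_1^{n_2(x)}x$. Note that it is $n_2$, not $n_1$, that governs pulling back $T_2$-coboundaries.
\item Let $a$ be the discontinuity point of $n_2$, if there is one. On a clopen $U$ with $a\notin U$, $n_2$ takes finitely many values on a clopen partition of $U$. Hence $[1_{\psi(U)}]=[1_U]$ in $H(X_1,T_1)$.
\item For a clopen $A$, either $\psi^{-1}(A)$ or its complement $\psi^{-1}(X_1\setminus A)$ avoids $a$. In the first case, step 2 gives $[1_A\circ\psi]=[1_{\psi^{-1}(A)}]=[1_A]$. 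In the second case, step 2 applied to the complement together with $[1_{X_1}]$ gives the same equality.
\item Since indicators of clopen sets generate $C(X_1,\Z)$, every $T_2$-coboundary pulls back to a $T_1$-coboundary.
\item The same argument applied to $T_1(x)=\psi^{n_1(x)}(x)$ gives the reverse inclusion of coboundary groups.
\end{enumerate}
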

The set $\cM(X,T)$ corresponds to the set of traces of both $K^0(X,T)$ and $K^0(X,T)/\Inf(K^0(X,T))$ \cite{Herman1992}, and thus it is invariant under OE. Moreover, the set of traces is known to completely determine the positive cone: $H^+(X,T)=\{[f]\in H(X,T)\mid p([f])>0 \text{ for all trace } p\}\cup \{[0]\}$ (see \cite{Effros_dimension_C*_algebras:1981}).

\section{Bratteli diagrams and $\cS$-adic subshifts}\label{sec:BDandSadic}

A {\it Bratteli diagram} is an infinite directed graph $(V,E)$ with set of vertices $V=(V_i)_{i\geq 0}$ and set of edges $E=(E_i)_{i\geq 1}$ that can be written as a countable disjoint union of non-empty finite sets,
$$V=V_0\cup V_1\cup V_2\cup\cdots \quad \mbox{ and } \quad E=E_1\cup E_2\cup\cdots,$$
and with the property that $V_0$ is a single point and there exist a {\em range map} $r\colon E\to V$ and a {\em source map} $s\colon E\to V$ such that $r(E_i)\subseteq V_i$, $s(E_i)\subseteq V_{i-1}$ for all $i\geq 1$. Also, we assume that $s^{-1}(v)\neq \emptyset$ for all $v\in V$ and $r^{-1}(v)\neq \emptyset$ for all $v\in V\setminus V_0$.

For $i\in\N$, we let $M_i$ denote the $i$th {\em incidence matrix} of $(V,E)$, which is by definition the $|V_{i+1}|\times|V_i|$ matrix whose coefficient $M_i(k,j)$ is the number of edges connecting $u_j\in V_i$ with $v_k\in V_{i+1}$.

Given a strictly increasing sequence of natural numbers $(t_i)_{i\in\N}$, a new Bratteli diagram $(V',E')$ is defined by letting $V_i'=V_{t_i}$ and $M_i'=M_{t_{i+1}-1}M_{t_{i+1}-2}\cdots M_{t_i}$. The sets of edges $E_i'$ and the range and source maps are obtained from the new incidence matrices. The sequence $(t_i)_{i\in\N}$ is called a {\em sequence of telescoping depths} and the new diagram $(V',E')$ is called the {\em telescoping} of $(V,E)$ to $(t_i)_{i\in\N}$. If a Bratteli diagram $(V',E')$ can be telescoped to obtain $(V,E)$, then we say that $(V',E')$ is a {\em microscoping} of $(V,E)$. A Bratteli diagram is {\em simple} if it can be telescoped to obtain a new diagram all whose incidence matrices have positive entries. 

\begin{rem}\label{rem:preservesedgesvsvertices}
    Note that if a simple Bratteli diagram $(V,E)$ has the property that for all $i\geq 1$, for all $u\in V_i$ and $v\in V_{i+1}$, $M_i(v,u)\geq f(|V_{i+1}|)$, then the same is true for any telescoping of $(V,E)$. 
\end{rem}

An {\em ordered Bratteli diagram} is a Bratteli diagram together with a linear ordering on $r^{-1}(v)$ for each vertex $v\in V\setminus V_0$. This defines a partial order $\geq$ on $E$. An ordered Bratteli diagram $(V,E)$ together with a partial order $\geq$ on $E$ is denoted $(V,E,\geq)$. Let $E_{min}$ and $E_{max}$ denote the sets of minimal and maximal edges with respect to $\geq$, respectively. An {\em infinite path} in $(V,E)$ is a sequence of the form $(e_1,e_2,\cdots)$ where $e_i\in E_i$ and  $r(e_i)=s(e_{i+1})$ for all $i\in\N$. Such a path is called {\em minimal} (resp. {\em maximal}) if all its edges belong to $E_{min}$ (resp. $E_{max}$). An ordered Bratteli diagram is {\em properly ordered} if it is simple and the sets of minimal and maximal paths both are singletons. In this case we use the notation $E_{min}=x_{min}$ and $E_{max}=x_{max}$, respectively. 

Given a properly ordered Bratteli diagram $(V,E,\geq)$, it is possible to define a dynamic on it as follows. Let $X_B$ denote the space of all infinite paths on $E$ (where we assume that $X_B$ is infinite). We endow $X_B$ with a topology by giving a basis of clopen sets, namely the family of {\em cylinder} sets,
$$[e_1 , e_2 , \cdots, e_k ]_B = \{(f_1 , f_2 , \cdots) \in X_B: f_i=e_i \mbox{ for all } 1\leq i\leq k\}.$$
The space $X_B$ endowed with this topology is called the {\em Bratteli compactum} associated with $(V,E)$ and it is a Cantor space. We define the {\em Vershik map} $V_B$ on $X_B$ by setting $V_B(x_{max})=x_{min}$, and if $x=(e_1,e_2,\cdots)\neq x_{max}$, let $k$ be the smallest integer such that $e_k$ is not a maximal edge, let $f_k$ be the successor of $e_k$ on $E_k$, and define $V_B(x)=(f_1,f_2,\cdots, f_{k-1},f_k,e_{k+1},e_{k+2},\cdots)$, where $(f_1,\cdots, f_{k-1})$ is the minimal finite path on $E_1\circ E_2\circ \cdots \circ E_{k-1}$ with range equal to $s(f_k)$. The system $(X_B,V_B)$ is called the {\em Bratteli-Vershik system} associated to $(V,E,\geq)$.

Herman, Putnam and Skau showed that for any Cantor minimal system $(X,T)$, there exists a properly ordered Bratteli diagram $(V,E,\geq)$ such that $(X,T)$ and $(X_B,V_B)$ are conjugate \cite{HPS}. 

To any Bratteli diagram $(V,E)$ with sequence of incidence matrices $(M_i)_{i\in\N}$ one can associate an ordered group with unit as the inductive limit of the system
$$ \Z^{|V_0|}  \xrightarrow[]{M_0} \Z^{|V_1|}   \xrightarrow[]{M_1} \Z^{|V_2|}\xrightarrow[]{M_2} \cdots, $$
endowed with the induced order. The order unit corresponds to the element $[1,0]\in \varinjlim\limits_{i}(\Z^{|V_i|},M_i)$. This ordered group with unit is called the {\em dimension group} of $(V,E)$ and is denoted $K_0(V,E)$. Two Bratteli diagrams $(V,E)$ and $(V',E')$ have isomorphic dimension groups if and only if $(V',E')$ can be obtained from $(V,E)$ by a finite number of telescopings and microscopings. In particular, if there exists a Bratteli diagram $(\overline{V},\overline{E})$ such that $(V,E)$ is the telescoping of $(\overline{V},\overline{E})$ to the even levels and $(V',E')$ is the telescoping of $(\overline{V},\overline{E})$ to the odd levels (or {\em vice versa}), then $K_0(V,E)\cong K^0(V',E')$. On the other hand, $(V,E,\geq)$ is a properly ordered Bratteli diagram and $(X_B,V_B)$ is its associated Bratteli-Vershik system, then $K_0(V,E)\cong K^0(X_B,V_B)$ as ordered groups with unit \cite{HPS}.

\noindent Let $(V,E)$ be a simple Bratteli diagram with sequence of incidence matrices $(M_i)_{i\in \N}$. For each $i<j\in\N$, we denote $M_{[i,j)}=M_{j-1}M_{j-2}\cdots M_i$. For each $v\in V_j$, let $M_{[i,j)}(v)$ denote the sum $\sum_{u\in V_i}M_{[i,j)}(v,u)$ and let $\Vec{M}_{[i,j)}(v)$ denote the {\em incidence vector} of $v$, that is, $(M_{[i,j)}(v,u))_{u\in V_i}$. 
For each $i\geq 1$, let $\ell_i=\langle M_{[0,i)}\rangle=\min_{v\in V_i}\{M_{[0,i)}(v)\}$ and $L_i=\| M_{[0,i)}\|=\max_{v\in V_i}\{M_{[0,i)}(v)\}$.

For a positive integer matrix $M$, we say that $M$ has the {\em equal row sum (ERS)} property if the sum of the entries on a row of $M$ is constant. 

\begin{definition}\label{def:ersandpseudoers}
We say that a simple Bratteli diagram $(V,E)$ with sequence of incidence matrices $(M_i)_{i\in\N}$ is ERS if $\forall i\in\N, \ell_i=L_i$; we say that it is pseudo-ERS if $\lim_{i\to \infty}\frac{L_i}{\ell_i}=1$.
\end{definition}

\begin{rem}\label{rem:ers_epnp}
    For a simple Bratteli diagram, being ERS is equivalent to have the {\em equal path number property} (EPNP) introduced in \cite{Gjerde_Johansen_Bratteli_Toeplitz:2000}.
\end{rem}

\begin{definition}\label{def:pseudotoeplitz}
We say that a minimal subshift $(X,S)$ is pseudo-Toeplitz if it admits a Bratteli--Vershik representation $(V,E,\geq)$ such that $(V,E)$ is pseudo-ERS. 
\end{definition}

\Cref{def:pseudotoeplitz} is inspired by the following results relating ERS Bratteli diagrams and Toeplitz subshifts.

\begin{theorem}\cite[Theorem 8]{Gjerde_Johansen_Bratteli_Toeplitz:2000}\label{teo:erstoeplitz}
The family of expansive Bratteli-Vershik systems associated with ERS Bratteli diagrams coincides with the family of Toeplitz flows up to conjugacy.  
\end{theorem}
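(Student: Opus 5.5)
I would prove the two inclusions separately. Throughout I will use the Kakutani--Rokhlin partition picture of a properly ordered diagram: level $i$ of $(V,E,\geq)$ gives clopen towers indexed by $v\in V_i$, of heights $M_{[0,i)}(v)$, with bases the cylinders of paths whose first $i$ edges are minimal. The key external input I would invoke is the classical characterization (Williams, Downarowicz): a minimal subshift is Toeplitz if and only if it is an almost one-to-one extension of an odometer.

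\emph{ERS and expansive $\Rightarrow$ Toeplitz.}
1. If $(V,E)$ is ERS with common heights $h_i=\ell_i=L_i$, then $h_i\mid h_{i+1}$. Indeed, each level-$(i+1)$ tower is a concatenation of level-$i$ towers, each of height $h_i$.
2. Define $\pi\colon X_B\to \varprojlim \Z/h_i\Z$ by sending $x$ to the sequence of its heights in its level-$i$ tower, taken mod $h_i$. This map is continuous, because the partitions are clopen. It is compatible across levels by the nesting in step 1. It intertwines $V_B$ with $+1$, including at $x_{max}\mapsto x_{min}$, since all towers have equal height $h_i$. It is onto, by minimality.
3. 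The fiber $\pi^{-1}(0)$ consists of the paths at height $0$ in every level, i.e. paths all of whose edges are minimal. Proper ordering makes this fiber the singleton $\{x_{min}\}$.
4. Hence $(X_B,V_B)$ is an almost one-to-one extension of an odometer. Expansiveness makes it conjugate to a minimal subshift, and the characterization above shows it is Toeplitz.

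\emph{Toeplitz $\Rightarrow$ ERS.}
1. Let $x$ be a Toeplitz sequence with a period structure $p_1\mid p_2\mid\cdots$. Let $\pi\colon X\to\Z_{(p_i)}$ be the factor map onto its maximal equicontinuous factor. I would choose $x$ in the unique-fiber set, normalized so that $\pi(x)=0$.
2. For each $i$, let $B_i$ be the clopen set $\pi^{-1}(\text{cylinder } 0 \bmod p_i)\cap[x_{[-r_i,r_i]}]$, with $r_i\to\infty$ chosen so that $B_i$ is decreasing.
3. Returns to $B_i$ occur only at multiples of $p_i$. I would refine the towers over $B_i$ so that every tower has height exactly $p_i$, splitting the base according to the $p_i$-block read from the base point.
4. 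These partitions are nested. They generate the topology because the window widths $r_i$ grow. Their bases shrink to the single point $\pi^{-1}(0)=\{x\}$.
5. The standard HPS construction then yields a properly ordered Bratteli diagram in which all level-$i$ heights equal $p_i$, i.e. an ERS diagram. The associated system is expansive because $X$ is a subshift.

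\emph{Main obstacle.} I expect the hardest step to be arranging that every tower over $B_i$ has height exactly $p_i$, rather than a multiple of $p_i$, while keeping the partitions nested and generating. My plan is to use the $p_i$-skeleton of $x$: the base is refined by the whole $p_i$-block starting at the base point, and a return is counted at every multiple of $p_i$. This requires choosing the cutting points consistently, so that $B_{i+1}\subseteq B_i$ and each level-$(i+1)$ tower is exactly $p_{i+1}/p_i$ consecutive level-$i$ towers.
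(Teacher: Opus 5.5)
The paper does not prove this statement; it quotes it from Gjerde--Johansen, so your proposal has to be judged on its own. Your first inclusion (ERS and expansive implies Toeplitz) is correct as written. The height map $\pi$ is continuous, compatible across levels and equivariant. Its fiber over $0$ is the set of paths all of whose edges are minimal, i.e. $\{x_{min}\}$. The Williams/Markley--Paul/Downarowicz characterization then applies once expansiveness identifies $(X_B,V_B)$ with a subshift. You should add one line saying that $h_i\to\infty$, since otherwise $V_B^{h}x_{min}\in\pi^{-1}(0)=\{x_{min}\}$ would make $x_{min}$ periodic.

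The second inclusion has a genuine gap at Steps 2--3, and it is exactly the obstacle you flag without resolving. In a Kakutani--Rokhlin partition the tower heights are the first return times to the base. These depend only on the base set, so no refinement of the base into clopen pieces (by $p_i$-blocks or anything else) can change them. Write $C_i=\pi^{-1}(p_i\Z_{(p)})$. Since $X=\bigsqcup_{0\le j<p_i}T^jC_i$, any clopen $B_i\subsetneq C_i$ satisfies $\bigcup_{j<p_i}T^jB_i\neq X$. Hence some return time to your $B_i=C_i\cap[x_{[-r_i,r_i]}]$ is $kp_i$ with $k\ge 2$, and in general the level-$i$ heights are not even all equal. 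Your remedy, counting a return at every multiple of $p_i$, amounts to using $C_i$ as the base, which contradicts the definition of $B_i$.

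The missing observation is that the cylinder is unnecessary:
\begin{itemize}
\item $C_i$ is clopen, $C_{i+1}\subseteq C_i$ and $T^{p_i}C_i=C_i$, so every point of $C_i$ returns exactly at time $p_i$.
\item $\bigcap_i C_i=\pi^{-1}(0)=\{x\}$, precisely because you chose $x$ with a singleton fiber.
\end{itemize}
The window information belongs in the partition of the base, not in the base itself. Partition $C_i$ into the sets $C_i\cap[w]$, with $w$ a word on $[-r_i,r_i]$. Require $r_i-p_i\to\infty$: then the atoms $T^j(C_i\cap[w])$, $0\le j<p_i$, determine the coordinates $[-r_i,r_i-p_i+1]$, so the partitions generate the topology. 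Require also $r_{i+1}\ge r_i+p_{i+1}$: then each $T^{kp_i}(C_{i+1}\cap[w'])$ lies in a single $C_i\cap[w]$. This gives nestedness, with every level-$(i+1)$ tower made of $p_{i+1}/p_i$ consecutive level-$i$ towers. With these bases the HPS construction yields the ERS diagram you want.
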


\begin{theorem}\cite[Theorem 1.2]{Sugisaki_toeplitz_bratteli_SOE:2001}
The family of Bratteli–Vershik systems associated with ERS Bratteli diagrams coincides with the family of Toeplitz flows up to strong orbit equivalence.
    
\end{theorem}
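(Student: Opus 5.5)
We prove the two inclusions separately. The inclusion ``Toeplitz flows $\subseteq$ ERS Bratteli--Vershik systems up to SOE'' is immediate from \Cref{teo:erstoeplitz}. Indeed, every Toeplitz flow is conjugate to an expansive Bratteli--Vershik system on an ERS diagram, and conjugacy implies SOE. So the work is in the converse. Let $(V,E,\geq)$ be a properly ordered ERS diagram. We want a Toeplitz flow SOE to $(X_B,V_B)$.

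The strategy is to keep the unordered diagram, up to telescoping, and change only the order, so that the new Bratteli--Vershik system becomes expansive. Then \Cref{teo:erstoeplitz} identifies it with a Toeplitz flow. The key observations are the following.
\begin{itemize}
\item The ERS property depends only on the incidence matrices. It is preserved under telescoping, since the number of paths from the top to a vertex of $V_{t_i}$ stays constant on that level.
\item $K_0(V,E)$ depends only on the unordered diagram and is invariant under telescoping.
\item For any proper order $\geq'$ on the same diagram, $K^0(X_B,V_B)\cong K_0(V,E)\cong K^0(X_{B'},V_{B'})$ by \cite{HPS}. Hence the two systems are SOE by \Cref{theo:caracterizationsoe}.
\end{itemize}
It therefore suffices to find a telescoping of $(V,E)$ and a proper order on it whose Bratteli--Vershik system is expansive.

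First, using simplicity, we telescope so that every entry satisfies $M_i(v,u)\geq 3$, and moreover $M_i(v,u)\geq |V_{i+1}|+3$ for all $i\geq 1$, $u\in V_i$, $v\in V_{i+1}$. Fix a vertex $u_i^0\in V_i$ at each level. We read the order on $r^{-1}(v)$, for $v\in V_{i+1}$, as a word $\tau_i(v)$ over the alphabet $V_i$, obtained by listing sources in order. We choose these words as follows:
\begin{itemize}
\item every $\tau_i(v)$ begins with the same prefix $u_i^0u_i^0$ and ends with the same suffix $u_i^1$, where $u_i^1\neq u_i^0$;
\item in the middle, $\tau_i(v)$ contains a block of the form $u_i^0\,w_v\,u_i^0$ encoding $v$ injectively, for instance a run of $u_i^1$'s of length equal to the index of $v$;
\item the remaining edges are placed in an arbitrary fixed pattern avoiding the factor $u_i^0u_i^0$.
\end{itemize}
The common first and last letters make all minimal edges end in a single chain, and likewise all maximal edges, so the order is proper. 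The lower bound on the entries guarantees that there are enough edges to realize all these requirements while respecting the prescribed row counts.

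The main obstacle is proving that the resulting system is expansive. Equivalently, we must show that the partition of $X_{B'}$ by the first edge, read as a level-one coding, generates, so that $(X_{B'},V_{B'})$ is conjugate to the $\cS$-adic subshift generated by $(\tau_i)$. I would prove this through a recognizability argument. The markers $u_i^0u_i^0$ occur in $\tau_i(V_{i+1})$ only at the beginning of each image word. This lets us cut a level-$i$ coding of a point uniquely into $\tau_i$-blocks. The injective middle block then recovers the level-$(i+1)$ letter from each block. Inductively, the level-one coding of a point determines its edges at all levels, which gives injectivity of the coding map. The delicate points are the following.
\begin{itemize}
\item We must ensure the marker cannot arise across the boundary of two concatenated blocks. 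This is why the suffix is $u_i^1$ and the marker avoids any overlap.
\item We must handle the orbit of $x_{\max}$ and $x_{\min}$, where the cutting is ambiguous at the one discontinuity. This is handled by the usual one-sided argument.
\end{itemize}
Once expansiveness holds, \Cref{teo:erstoeplitz} gives that $(X_{B'},V_{B'})$ is a Toeplitz flow, and by the SOE observation above it is SOE to $(X_B,V_B)$.
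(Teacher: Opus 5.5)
\textbf{There is a genuine gap in the converse direction.} The paper does not prove this statement (it is quoted from Sugisaki), so I compare your proposal with the tools the paper develops. Your direction via \Cref{teo:erstoeplitz}, and the reduction from equal $K_0$ to SOE, are fine. The converse has two problems.

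The first is that keeping the unordered diagram (up to telescoping) and changing only the order cannot work in general. If $|V_i|=1$ for all large $i$, as for an odometer (a Bratteli--Vershik system on an ERS diagram), every telescoping still has one vertex per level. Then for every proper order the Kakutani--Rokhlin partitions are single towers generating the topology. So the system is an odometer and is never expansive. Your construction silently uses $u_i^1\neq u_i^0$, i.e.\ $|V_i|\geq 2$.

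The second is that, even when $|V_i|\geq 2$, you cannot confine the marker $u_i^0u_i^0$ to the beginnings of blocks. The letter counts of $\tau_i(v)$ are fixed by the incidence matrix. A word with $a$ copies of $u_i^0$ and $b$ other letters contains at least $a-b-1$ occurrences of $u_i^0u_i^0$, and lower bounds on entries do not control ratios between columns. For example, take the ERS diagram with $M_0=(1,1)^t$ and $M_i=\begin{pmatrix}5&1\\1&5\end{pmatrix}$ for $i\geq1$. Every telescoping has matrices $M_i^k$ with $k\geq 1$, whose diagonal entries exceed the off-diagonal ones by $4^k$. So whichever vertex you fix as $u_i^0$, for one vertex $v$ of the next level $\tau_i(v)$ has $a-b=4^k$ and contains $u_i^0u_i^0$ at least $3$ times. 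Your cutting argument then collapses.

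The missing idea is to modify the diagram itself, not only the order, while preserving $K_0$ and ERS. After telescoping, factor $\widetilde M_i=B_iA_i$ with $A_i$ a $0$--$1$ matrix having exactly one $1$ per row, i.e.\ split vertices. This leaves the number of paths from the top vertex unchanged (\Cref{rem:preservesers2}), so ERS survives, and it gives at least two vertices per level. It also lets you create dedicated letters $v^i_{\min},v^i_{\max}$ that receive edges only in the prefix $(v^i_{\min})^j$ and the suffix $v^i_{\max}$ of each $\tau_i(v)$. The cutting points are then exactly the transitions $v^i_{\max}v^i_{\min}$, regardless of the letter counts. Injectivity on letters comes from the prefix length $j$ together with a count of the available orders of the remaining edges.

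This is the paper's splitting procedure combined with \Cref{lem:recognizable2} and \Cref{cor:suffconditions_isom}. It yields an expansive Bratteli--Vershik system on an ERS diagram with the same $K_0$, which is Toeplitz by \Cref{teo:erstoeplitz}, exactly as in \Cref{rem:K0divisible}.
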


Let $\cA$, $\cB$ be two finite alphabets. Let $\tau \colon \cA^{\ast} \to \cB^{\ast}$ be a morphism. We say that $\tau$ is {\it non-erasing} if the image of any letter is a non-empty word. The {\it incidence matrix} of $\tau$ is the $|\cB|\times|\cA|$ matrix whose entry at a position $(b,a)$ is the number of times that $b$ appears in $\tau(a)$. For $a\in\cA$, the length of the word $\tau(a)\in \cB^{\ast}$ is denoted $|\tau(a)|$. The morphism $\tau$ is {\em positive} if the entries of $|\cB|\times|\cA|$ are all positive and is {\it left proper} (resp. {\it right proper}) if there exists a letter $b\in\cB$ such that for every $a\in\cA$, $\tau(a)$ starts with $b$ (resp. ends with $b$); it is {\it proper} if it is both left and right proper. We say that $\tau$ is a {\it hat morphism} if for all $a,b\in \cA$, the letters appearing in $\tau(a)$ and $\tau(b)$ are all distinct. By concatenation, a morphism $\tau\colon \cA^{\ast}\to \cB^{\ast}$ can be extended to $\cA^\N$ and $\cA^\Z$.\\

A {\it directive sequence} ${\bt}=(\tau_i)_{i\geq 0}$ is a sequence of non-erasing morphisms $\tau_i\colon \cA_{i+1}^{\ast}\to \cA_i^{\ast}$, $i\in \N$. We let $\tau_{[i,k)}$ denote the composition $\tau_i\circ\tau_{i+1}\circ\cdots\circ \tau_{k-1}$. We say that $\bt$ is {\it everywhere growing} if $\langle \tau_{[0,i)} \rangle=\min_{a\in \cA_i}\{|\tau_{[0,i)}(a)|\}$ tends to $\infty$ as $i\to\infty$. We say that $\bt$ is {\it primitive} if for every $i\geq 0$, there exists $k\geq i$ such that $\tau_{[i,k)}$ has a positive incidence matrix.

For $i\geq 0$, the {\it language of order $i$} $L_{{\bt}}^{(i)}$ associated with ${\bt}$ is defined as
$$L_{{\bt}}^{(i)}=\{w\in\cA_i^{\ast}: \exists k>i, \exists a\in \cA_k, w\prec \tau_{[i,k)}(a)\}.$$
For each $i\geq 0$, the set $X_{{\bt}}^{(i)}$ is the set of infinite words $x\in\cA_i^\Z$  whose factors belong to $L_{{\bt}}^{(i)}$. 
We set $X_{{\bt}}=X_{{\bt}}^{(0)}$, $L_{{\bt}}=L_{{\bt}}^{(0)}$ and call $(X_{{\bt}}, S)$ the {\it $\cS$-adic system generated by the directive sequence} ${\bt}$, where $S$ is the shift transformation. For all $\ell \geq 1$, we denote by $L_{{\bt},\ell}^{(i)}$ the subset of length $\ell$ factors of $L_{{\bt}}^{(i)}$. When the directive sequence $\bt$ is primitive, $(X_{\bt},S)$ is a minimal subshift.

Given an ordered Bratteli diagram $(V,E,\geq)$, let $i\geq 1$ and consider $V_i$, $V_{i+1}$ as finite alphabets. For every $u\in V_{i+1}$, consider the ordered list $(e_1,e_2,\cdots, e_k)$ of edges on $E_{i+1}$ arriving to $u$, and let $(v_1,v_2,\cdots, v_k)$ be the list of labels of the sources of these edges in $V_i$. This defines a morphism $\tau_i\colon V_{i+1}^{\ast} \to V_{i}^{\ast}$,  $u\mapsto v_1v_2\cdots v_k$. For $i=0$, we let $\tau_0\colon V_1^{\ast} \to E_1^{\ast}$ denote the morphism such that  
$\tau_0(v)=e_1(v)\ldots e_\ell(v)$, where  $e_1(v),\ldots,e_\ell(v)$ are the edges connecting $V_0$ with $v$ according to the order of the diagram. For $i\geq 0$ we say that $\tau_i$ is {\it the morphism read on $(V,E,\geq)$ at level $i$} and that the directive sequence ${\boldsymbol \tau}=(\tau_i)_{i\geq 0} $ is the {\it sequence of morphisms read  on} $(V,E,\geq)$. 

\begin{rem}\label{rem:hat}
    Note that the morphism read on $(V,E,\geq)$ at level $0$ is a hat morphism.
\end{rem}

The notion of the morphism read on an ordered Bratteli diagram gives an $\cS$-adic subshift naturally associated with each Bratteli-Vershik system. The following result gives sufficient conditions for these two representations to be conjugate as dynamical systems. It corresponds to a slight modification of \cite[Proposition 2.2]{Durand&Leroy:2012}.

\begin{proposition}\cite[Proposition 2.2]{Durand&Leroy:2012}\label{prop:isom}
Let $(X_{\bt},S)$ be the minimal $\cS$-adic subshift defined by the directive sequence $\bt  = (\tau_i)_{i\geq 0}$, where $\tau_0$ is a hat morphism and $\tau_i$ is proper for all $i\geq 1$. Suppose that all morphisms $\tau_i$ extend by concatenation to a one-to-one map from $X^{(i+1)}_{\bt}$ to $X^{(i)}_{\bt}$. Then, $(X_{\bt},S)$ is conjugate to the Bratteli-Vershik system $(X_{B} , V_{B})$ associated with the ordered Bratteli diagram $B=(V,E,\geq)$, where  $B$ is such that $\bt$ corresponds to the sequence of morphisms read  on $(V,E,\geq)$.
\end{proposition}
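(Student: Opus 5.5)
The plan is to build an explicit coding map from the Bratteli--Vershik system to the $\cS$-adic subshift and show that it is a conjugacy. Let $B=(V,E,\geq)$ be the ordered diagram on which $\bt$ is read, so that $\tau_0(v)$ lists the edges of $E_1$ ending at $v$, and $\tau_i$ lists the sources of the edges ending at each vertex of $V_{i+1}$, in order. Define $\phi\colon X_B\to E_1^{\Z}$ by
\[
\phi(x)_n = \text{first edge of } V_B^n(x), \qquad n\in\Z .
\]
This map is continuous, since $V_B$ is a homeomorphism and the first edge is a locally constant function. It satisfies $\phi\circ V_B=S\circ\phi$ by construction.

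\textbf{Step 1: the image lies in $X_{\bt}$.} Fix a finite window of $\phi(x)$ and a level $k$. The Kakutani--Rokhlin tower of level $k$ over a vertex $v\in V_k$ has height $|\tau_{[0,k)}(v)|$, and along that tower the first edges spell exactly $\tau_{[0,k)}(v)$. Choose $k$ large enough that the window meets at most two consecutive towers. Two towers are consecutive in one of two ways.
\begin{itemize}
\item If they are consecutive inside some $\tau_k(u)$ with $u\in V_{k+1}$, the window is a factor of $\tau_{[0,k+1)}(u)$.
\item Otherwise the orbit crosses the passage from $x_{\max}$ to $x_{\min}$. There the word read is (last letter of $\tau_{[0,m)}(u)$) followed by (first letter of $\tau_{[0,m)}(u')$), for every large $m$.
\end{itemize}
In the second case properness helps: since $\tau_m$ is proper, all $\tau_m(w)$ begin with the same letter $b$ and end with the same letter $b'$. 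Primitivity then gives some level-$(m+2)$ image containing two consecutive letters $cd$, and $\tau_m(cd)=\cdots b'b\cdots$. So the concatenation across the boundary is again a factor of some $\tau_{[0,m+2)}(a)$. Hence every factor of $\phi(x)$ belongs to $L_{\bt}$, and $\phi(X_B)\subseteq X_{\bt}$.

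\textbf{Step 2: surjectivity.} $\phi(X_B)$ is a nonempty, closed, shift-invariant subset of the minimal subshift $X_{\bt}$, so it equals $X_{\bt}$.

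\textbf{Step 3: injectivity (the main step).} I would reconstruct $x$ from $y=\phi(x)$ level by level. Because $\tau_0$ is a hat morphism, every letter of $E_1$ occurs in exactly one $\tau_0(v)$ and at exactly one position there. So $y$ is uniquely cut into blocks $\tau_0(v)$, which determines $y^{(1)}\in X^{(1)}_{\bt}$ with $\tau_0(y^{(1)})=y$, together with the offset of $0$ inside its block. Reading off $y_0$ gives $e_1$, and the block containing coordinate $0$ gives $r(e_1)$.

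Inductively, suppose $y^{(i)}\in X^{(i)}_{\bt}$ and the position of the origin are known. The hypothesis that $\tau_i$ extends to a one-to-one map $X^{(i+1)}_{\bt}\to X^{(i)}_{\bt}$ should give a unique $y^{(i+1)}$ and a unique cutting. I would interpret this hypothesis as recognizability: injectivity on the set of pairs (point, admissible shift inside the first block). The uniquely determined block containing the origin identifies $e_{i+1}$ as the edge into the corresponding vertex of $V_{i+1}$, at the position of the origin within that block.

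The obstacle is making this interpretation precise and checking its consistency at the orbit of $x_{\max}$/$x_{\min}$. Properness is used again here: the cut points of all levels must be compatible there, and the path recovered along the boundary should be $x_{\min}$ or $x_{\max}$, as appropriate. Once $x$ is recovered from $\phi(x)$, $\phi$ is a continuous bijection between compact spaces, hence a conjugacy.
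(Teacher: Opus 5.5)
The paper does not prove this proposition; it quotes it, with a slight modification, from Durand--Leroy. So your outline has to stand on its own. Steps 1 and 2 are the standard coding argument and are fine. The one slip is in the first case of Step 1: two consecutive level-$k$ towers are consecutive inside $\tau_{[k,m)}(u)$ for some $m>k$, not necessarily inside $\tau_k(u)$. This changes nothing.

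\textbf{The genuine gap is Step 3.} It cannot be closed by making your interpretation of the hypothesis precise. At every level $i\geq 1$ your reconstruction needs \emph{unique centred desubstitution}: each $y\in X^{(i)}_{\bt}$ is of the form $S^k\tau_i(z)$, with $z\in X^{(i+1)}_{\bt}$ and $0\le k<|\tau_i(z_0)|$, for at most one pair $(z,k)$.

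The hypothesis as stated only gives $\tau_i(z)=\tau_i(z')\Rightarrow z=z'$; this aligned injectivity is also all that \Cref{lem:recognizable} verifies. It says nothing about coincidences $S^k\tau_i(z)=\tau_i(z')$ with $0<k<|\tau_i(z_0)|$, and properness does not rule them out.

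For example, let $Z\subseteq\{p,q\}^{\Z}$ be minimal with $(Z,S^2)$ minimal. Put $\beta(z)=(z_{2n}z_{2n+1})_{n\in\Z}$, $W=\beta(Z)$, and $\sigma(st)=b\,s\,b'\,b\,t\,b'$. Then $\sigma$ is proper, injective on letters, and injective on $W$, since the letters of $z$ sit at the positions $\equiv 1,4 \bmod 6$. Yet $S^3\sigma(\beta(z))=\sigma(\beta(Sz))$ with $\beta(Sz)\neq\beta(z)$.

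This example also refutes the statement under its literal reading. Take $Z$ to be the subshift of $p\mapsto ppq$, $q\mapsto pqq$. Then $W$ is generated by a proper, primitive, aperiodic (hence recognizable) substitution on $2$-blocks, induced by the square of that substitution. Consider the directive sequence formed by a letter-to-letter hat morphism, then $\sigma$, then that substitution repeated. It satisfies all the hypotheses as literally stated.

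However, every level-$2$ tower of $X_B$ has height $6$. So the parity of the height in the level-$2$ tower is a continuous function flipped by $V_B$, and $X_B$ has the continuous eigenvalue $-1$. On the other hand, $X_{\bt}$ is a tower of height $3$ over $Z$, and the continuous eigenvalues of $Z$ are $3$-power roots of unity. An eigenfunction for $-1$ on $X_{\bt}$ would restrict to one for $(-1)^3=-1$ on $Z$, which is impossible.

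Hence no argument can complete Step 3 from aligned injectivity alone. You must assume recognizability in the strong sense above, and say so explicitly. It does hold in the paper's application: in \Cref{lem:recognizable2} the letter $v^i_{\max}$ occurs exactly once in each $\tau_i(v)$, as its last letter, so cutting points are visible locally.

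With that assumption the step closes at once, and your worry about the $x_{\max}/x_{\min}$ orbit disappears. Take any $x\in X_B$, including that orbit. Let $z^{(i)}(x)$ be the sequence of level-$i$ towers crossed by the orbit of $x$, indexed so that $z^{(i)}(x)_0=r(x_i)$, and set $z^{(0)}(x)=\phi(x)$.
\begin{itemize}
\item By your Step 1 argument, $z^{(i)}(x)$ lies in $X^{(i)}_{\bt}$.
\item For every $i\geq 0$, $z^{(i)}(x)=S^{h_i(x)}\tau_i(z^{(i+1)}(x))$, where $h_i(x)$ is the rank of $x_{i+1}$ in $r^{-1}(r(x_{i+1}))$.
\end{itemize}
The hat property at level $0$ and recognizability at levels $i\geq 1$ recover every pair $(z^{(i+1)}(x),h_i(x))$ from $\phi(x)$, hence every edge $x_{i+1}$. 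Properness is needed only in Step 1 and to make $B$ properly ordered.
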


Let $(X_{\bt},S)$ be the minimal $\cS$-adic subshift defined by the directive sequence $\bt  = (\tau_i)_{i\geq 0}$. For $i\geq 1$ and $x\in X^{(i+1)}_{\boldsymbol{\tau}}$, we define the set $C_{\tau_i}(x)$ of {\it cutting points} of $\tau_i(x)$ as the following set,
$$C_{\tau_i}(x)=\{|\tau_i(x_{[0,\ell)})|:\ell >0\}\cup\{0\}\cup \{-|\tau_i(x_{[\ell,0)})|:\ell <0\}.$$ 

\begin{lemma}\label{lem:recognizable}
    Let $(X_{\bt},S)$ be the minimal $\cS$-adic subshift defined by a directive sequence $\boldsymbol{\tau}  = (\tau_i:\cA_{i+1}^\star \to \cA_i^\star)_{i\geq 0}$. Suppose that for all $i\geq 1$, 
    \begin{enumerate}
        \item $\tau_i$ is injective on letters, and
        \item $\forall x, y \in X_{\bt}^{(i+1)}$, $\tau_i(x)=\tau_i(y)\implies C_{\tau_i}(x)=C_{\tau_i}(y)$.
    \end{enumerate}
    Then, each $\tau_i$ extends by concatenation to a one-to-one map from $X^{(i+1)}_{\bt}$ to $X^{(i)}_{\bt}$.
\end{lemma}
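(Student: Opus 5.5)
Injectivity of the extension is a statement about uniqueness of preimages, so I would fix $i\geq 1$ and $x,y\in X^{(i+1)}_{\bt}$ with $\tau_i(x)=\tau_i(y)$, and show $x=y$. Concatenation is well defined on bi-infinite sequences by placing $\tau_i(x_0)$ at coordinate $0$ and concatenating $\tau_i(x_{[0,\ell)})$ to the right and $\tau_i(x_{[\ell,0)})$ to the left. Each morphism $\tau_i$ is non-erasing, so each image word is nonempty. Consequently the cutting set $C_{\tau_i}(x)$ is a bi-infinite strictly increasing sequence of integers containing $0$. Write its elements as $\cdots<c_{-1}(x)<c_0(x)=0<c_1(x)<\cdots$, where $c_\ell(x)=|\tau_i(x_{[0,\ell)})|$ for $\ell>0$ and symmetrically for $\ell<0$. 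Then for every $\ell\in\Z$ we have
$$\tau_i(x)_{[c_\ell(x),c_{\ell+1}(x))}=\tau_i(x_\ell).$$
I would also note that $\tau_i$ indeed maps $X^{(i+1)}_{\bt}$ into $X^{(i)}_{\bt}$. Indeed, any factor of $\tau_i(x)$ lies inside $\tau_i(w)$ for some factor $w$ of $x$. We have $w\prec\tau_{[i+1,k)}(a)$, hence $\tau_i(w)\prec\tau_{[i,k)}(a)$, so the factor lies in $L^{(i)}_{\bt}$.

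The key step is to apply hypothesis (2), which gives $C_{\tau_i}(x)=C_{\tau_i}(y)$. Both sets are order-isomorphic to $\Z$ and both contain $0$. Equal sets therefore have equal order-preserving enumerations normalized by $c_0=0$, and so $c_\ell(x)=c_\ell(y)$ for every $\ell$. It follows that
$$\tau_i(x_\ell)=\tau_i(x)_{[c_\ell,c_{\ell+1})}=\tau_i(y)_{[c_\ell,c_{\ell+1})}=\tau_i(y_\ell)$$
for all $\ell$. Hypothesis (1), injectivity of $\tau_i$ on letters, then yields $x_\ell=y_\ell$ for each $\ell$, so $x=y$.

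There is no real obstacle; the only point needing care is the indexing convention. The cutting points must be enumerated consistently from $0$, so that equality of the cutting sets as sets gives equality position by position. This uses that the sets are discrete, unbounded in both directions, and anchored at $0$, and all of this follows from non-erasingness. The case $i=0$ is not required by the statement. Had it been needed, it would follow from $\tau_0$ being a hat morphism, which is the situation treated in \Cref{prop:isom}.
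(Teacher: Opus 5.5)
Your proposal is correct and follows essentially the same argument as the paper: hypothesis (2) aligns the cutting points, which gives $\tau_i(x_\ell)=\tau_i(y_\ell)$ for every $\ell$, and injectivity on letters then gives $x=y$. Your extra care about enumerating the cutting points from $0$ and checking that $\tau_i(X^{(i+1)}_{\bt})\subseteq X^{(i)}_{\bt}$ spells out what the paper leaves implicit.
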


\begin{proof}
    Let $x,y\in X^{(i+1)}_{\boldsymbol{\tau}}$ be two infinite words such that $\tau_i(x)=\tau_i(y)$. We want to prove that $x=y$. By (2), the cutting points of $x$ and $y$ are located exactly in the same places, so  $\tau_i(x_\ell)=\tau_i(y_\ell)$ for all $\ell\in\Z$. Since $\tau_i$ is injective on $\cA_{i+1}$ by (1), we conclude that $x_\ell=y_\ell$ for all $\ell\in\Z$, thus $x=y$.
\end{proof}

Note that under the assumptions of \cref{lem:recognizable}, if the directive sequence is proper, then each morphism $\tau_i$ is injective on words.

Combining \Cref{rem:hat}, \Cref{prop:isom} and \Cref{lem:recognizable} we obtain the following corollary.

\begin{corollary}\label{cor:suffconditions_isom}
    Let $B=(V,E,\geq)$ be a simple ordered Bratteli diagram, let  $\bt=(\tau_i\colon V_{i+1}^{\ast} \to V_{i}^{\ast})_{i\in\N}$ be the sequence of morphisms read on $B$. If for each $i\geq 1$, $\tau_i$ is proper and $\bt$ satisfies the hypotheses of \Cref{lem:recognizable}, then $(X_B, V_B)$ is conjugate to the minimal $\cS$-adic subshift $(X_{\bt},S)$. 
\end{corollary}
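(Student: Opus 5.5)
The corollary should follow by chaining the three earlier results, so the plan is mostly bookkeeping of hypotheses. The only genuinely needed observation is that the sequence of morphisms read on $B$ satisfies everything required by \Cref{prop:isom}, after which \Cref{lem:recognizable} supplies the injectivity hypothesis.

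\emph{Step 1 (hypotheses of \Cref{prop:isom}).} Let $\bt=(\tau_i)_{i\in\N}$ be the sequence of morphisms read on $B$.
\begin{enumerate}
\item By \Cref{rem:hat}, $\tau_0$ is a hat morphism. Distinct vertices of $V_1$ receive disjoint sets of edges from $V_0$.
\item By hypothesis, every $\tau_i$ with $i\geq 1$ is proper.
\item Since $B$ is simple, some telescoping has positive incidence matrices. The incidence matrix of $\tau_{[i,k)}$ is the transpose of $M_{[i,k)}$, so $\bt$ is primitive and $(X_{\bt},S)$ is a minimal subshift, as \Cref{prop:isom} requires.
\end{enumerate}

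\emph{Step 2 (the injectivity hypothesis).} \Cref{prop:isom} also asks that each $\tau_i$ extend by concatenation to a one-to-one map $X^{(i+1)}_{\bt}\to X^{(i)}_{\bt}$. For $i\geq 1$ this is exactly the conclusion of \Cref{lem:recognizable}, which applies because $\bt$ satisfies conditions (1) and (2) there by assumption.

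Two small points need checking here.
\begin{itemize}
\item The image $\tau_i(X^{(i+1)}_{\bt})$ lies in $X^{(i)}_{\bt}$. This holds because factors of $\tau_i(x)$ are factors of $\tau_i(w)$ for some $w\in L^{(i+1)}_{\bt}$, and hence lie in $L^{(i)}_{\bt}$.
\item For $i=0$, injectivity holds directly for a hat morphism. Letters in the images of distinct letters are distinct, so from $\tau_0(x)$ one reads off each letter's block, and thus $x$. Cutting points are recognized because each edge of $E_1$ determines its range vertex and its position in the order of $r^{-1}(v)$; in particular the minimal edges mark the cuts.
\end{itemize}
If \Cref{prop:isom} is meant only for $i\geq 1$ (the ``slight modification'' wording), this extra argument is not even needed.

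\emph{Step 3 (conclusion).} With all hypotheses verified, \Cref{prop:isom} gives the conjugacy between $(X_{\bt},S)$ and $(X_B,V_B)$, where $B$ is precisely the diagram on which $\bt$ is read.

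The main potential obstacle is the level-$0$ injectivity in Step 2, i.e.\ recognizing the cuts in $\tau_0(x)$. The plan is to handle it with the hat-morphism argument above.
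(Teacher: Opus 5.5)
Your proposal is correct and is the paper's own argument: the paper obtains this corollary by combining \Cref{rem:hat}, \Cref{prop:isom} and \Cref{lem:recognizable}, exactly as you do. Your additional checks are sound and fill in details the paper leaves implicit: primitivity from simplicity, the inclusion $\tau_i(X^{(i+1)}_{\bt})\subseteq X^{(i)}_{\bt}$, and injectivity at level $0$ via the hat morphism $\tau_0$, a case \Cref{lem:recognizable} does not cover.
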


\begin{theorem}[{\cite{Boyle_Handelman_entropy_vs_oe:1994}. See also \cite{Durand_combinatorics_bratelli:2010} or \cite[Theorem 4.3]{Berthe_Delecroix_beyond_substitutive_Sadic:2014}}]\label{theo:sadicentropy}
Let $(X_{\bt},S)$ be the $\cS$-adic shift defined by the directive sequence $\boldsymbol{\tau}  = (\tau_i:\cA_{i+1}^\star \to \cA_i^\star)_{i\geq 0}$. Then, the topological entropy of $(X_{\bt},S)$ satisfies,
$$h(X_{\bt},S)\leq \inf_{i\in\N}\frac{\log(|\cA_i|)}{\langle \tau_{[0,i)} \rangle}.$$

\end{theorem}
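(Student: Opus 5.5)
The last statement is \Cref{theo:sadicentropy}: for every $i$, $h(X_{\bt},S)\le \log(|\cA_i|)/\langle \tau_{[0,i)}\rangle$. Taking the infimum over $i$ then gives the result. The plan is to fix $i$ and bound $p_{X_{\bt}}(n)$ by counting the ways a word of length $n$ can be cut into images of letters of $\cA_i$ under $\tau_{[0,i)}$. Throughout, write $m=\langle \tau_{[0,i)}\rangle$ and $M=\max_{a\in\cA_i}|\tau_{[0,i)}(a)|$.

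\textbf{Desubstitution.} First I show that every $x\in X_{\bt}$ can be written as $x=S^k\tau_{[0,i)}(y)$ with $y\in X_{\bt}^{(i)}$ and $0\le k<M$. By definition, every factor of $x$ is a factor of some $\tau_{[0,j)}(a)=\tau_{[0,i)}(\tau_{[i,j)}(a))$ with $j>i$. So each central window $x_{[-N,N]}$ is covered by $\tau_{[0,i)}(w)$ for some $w\in L_{\bt}^{(i)}$, at a shift smaller than $M$. A compactness (diagonal) argument, using finiteness of $\cA_i$ and of the possible shifts, then produces the bi-infinite $y$ whose factors lie in $L^{(i)}_{\bt}$. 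Hence every factor $u$ of length $n$ of $x$ is a factor of $\tau_{[0,i)}(w)$ for some $w\in L^{(i)}_{\bt}$ of length at most $\lceil n/m\rceil+2$, starting at an offset smaller than $M$.

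\textbf{Counting.} Consequently
$$p_{X_{\bt}}(n)\le M\cdot|\cA_i|^{\,n/m+2}.$$
Taking logarithms, dividing by $n$ and letting $n\to\infty$ gives $h(X_{\bt},S)\le \log(|\cA_i|)/m$. Since $i$ was arbitrary, taking the infimum over $i$ finishes the proof.

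The main obstacle is the desubstitution step. It needs the directive sequence to be non-erasing, which makes $m\ge1$, and a little care at the level of bi-infinite sequences. One can sidestep it entirely, since the bound only involves the language: $p_{X_{\bt}}(n)$ is at most the number of factors in $L_{\bt}$ of length $n$. Every such factor, being a factor of some $\tau_{[0,i)}(\tau_{[i,j)}(a))$, already sits inside $\tau_{[0,i)}(w)$ for a factor $w$ of $\tau_{[i,j)}(a)$ with $|w|\le n/m+2$, at an offset smaller than $M$. This gives the counting bound directly, without constructing $y$.
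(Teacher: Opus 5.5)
The paper does not prove this statement; it quotes it from Boyle--Handelman (see also Durand, and Berth\'e--Delecroix). Your argument is the standard counting proof given in those references, and it is correct: you cover each length-$n$ factor by $\tau_{[0,i)}(w)$ with $|w|\le n/m+2$ and an offset smaller than $M$, so that $\log p_{X_{\bt}}(n)/n\to$ at most $\log|\cA_i|/m$.

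Two harmless details in the language-only version:
\begin{itemize}
\item Words of $L_{\bt}$ may come only from levels $k\le i$. Such words have length at most $\max_{k\le i}\max_{a\in\cA_k}|\tau_{[0,k)}(a)|$. Hence for $n$ beyond this bound every length-$n$ word does come from some level $j>i$, which is all the limit needs.
\item Your word $w$ has variable length, so the count is $M\sum_{r\le n/m+2}|\cA_i|^r$ rather than $M|\cA_i|^{n/m+2}$. This loses at most a polynomial factor in $n$ and does not affect the entropy bound.
\end{itemize}
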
 

\section{The splitting procedure}\label{sec:splitting}

We describe here one of the main tools we use to modify Bratteli diagrams in order to create entropy while keeping the dimension group and the pseudo-ERS property. It is inspired by the ideas of \cite{Sugisaki_subshift_within_SOE:2007}. This procedure is then used in the proofs of \Cref{theo:low} and \Cref{theo:main}. 

Let $(V,E)$ be a simple Bratteli diagram and let $(t_i)_{i\in\N}$ be a sequence of telescoping depths. Let $(\widetilde{V,}\widetilde{E})$ be the telescoping of $(V,E)$ to $(t_i)_{i\in\N}$. 

Let $i\geq 1$. Chose any vertex $\bar{u}\in\widetilde{V}_i$ and fix it. To each vertex $u\in\widetilde{V}_i$ we associate a number $q_i^{(u)}\in\N^\star$ and a set $H_i^{(u)}$ of $q_i^{(u)}$ ``new'' vertices. Let $V'_i=\bigcup_{u\in\widetilde{V}_i}H_i^{(u)}$. The set $V_i'$ should be seen as an intermediate set of vertices between $\widetilde{V}_i$ and $\widetilde{V}_{i+1}$, that is, we split each vertex in $\widetilde{V}_i$ into $q_i^{(u)}$ new vertices. Let $v_{\min}^i, v_{\max}^i$ be two different vertices belonging to $H_i^{(\bar{u})}$. We list the vertices of $\widetilde{V}_{i+1}$ as
$$\widetilde{V}_{i+1}=\{x_1,x_2,\ldots , x_{|\widetilde{V}_{i+1}|}\}.$$
Suppose that for each $u\in \widetilde{V}_i$ and $x=x_j\in \widetilde{V}_{i+1}$ one has
\begin{equation}\label{eq:conditionq_i}
    \widetilde{M}_i(x_j,u)\geq q_i^{(u)}+j-1.
\end{equation}
For $u\in\widetilde{V}_i$ and $x\in\widetilde{V}_{i+1}$, if $u\neq \bar{u}$, let $k_{x,u}$, $r_{x,u}$ be the unique integers such that
$$\widetilde{M}_i(x,u)=k_{x,u}q_i^{(u)}+r_{x,u}$$
and $0\leq r_{x,u}<q_i^{(u)}$. For $\bar{u}$, let $k_{x,\bar{u}}$ and $r_{x,\bar{u}}$ be the unique integers such that
$$\widetilde{M}_i(x,\bar{u})-j-1=k_{x,\bar{u}}(q_i^{(\bar{u})}-2)+r_{x,\bar{u}}$$
and $0\leq r_{x,\bar{u}}<q_i^{(\bar{u})}-2$, where $1\leq j\leq |\widetilde{V}_{i+1}|$ and $x=x_j$.

We define two non-negative matrices, $A_i$ and $B_i$, connecting vertices in $V_i'$ with vertices in $\widetilde{V}_i$ and vertices in $\widetilde{V}_{i+1}$ with vertices in $V_i'$, respectively, in such a way that $\widetilde{M}_i=B_iA_i$ for all $i\geq 1$. For all $v\in V_i'$ and $u\in \widetilde{V}_i$, 

$$
A_i(v,u)= \begin{cases} 1 & \text{ if } v \in H_i^{(u)}\\
0 & \text{ else}. 
\end{cases}
$$

For $1\leq j\leq |\widetilde{V}_{i+1}|$, define $B_i(x,v_{\min}^i)=j$ and $B_i(x,v_{\max}^i)=1$ for $x=x_j$.  Note that this choice is possible thanks to Condition \eqref{eq:conditionq_i}.

For the rest of the vertices $v\in V_i'$, we define $B_i(x_j,v)$ as follows. For $v\in H_i^{(u)}$, $u\neq \bar{u}$, choose $B_i(x,v)\in \{k_{x,u},k_{x,u}+1\}$ in such a way that
     $$\sum_{v\in H_i^{(u)}}B_i(x,v)=\widetilde{M}_i(x,u).$$
     This means that we assign $k_{x,u}$ edges to $q_i-r_{x,u}$ vertices in $H_i^{(u)}$ and $k_{x,u}+1$ edges to the $r_{x,u}$ remaining vertices in $H_i^{(u)}$. For $v\in H_i^{(\bar{u})}\setminus\{v_{\min}^i, v_{\max}^i\}$, choose $B_i(x,v)\in \{k_{x,\bar{u}},k_{x,\bar{u}}+1\}$ in such a way that
     $$\sum_{v\in H_i^{(\bar{u})}\setminus\{v_{\min}^i,v_{\max}^i\}}B_i(x,v)=\widetilde{M}_i(x,\bar{u})-j-1.$$
     
\begin{rem}\label{rem:distinctincidencevector}
Note that, by construction of the matrix $B_i$, if $x,x'\in \widetilde{V}_{i+1}$ are different vertices, then $\Vec{B_i}(x)\neq \Vec{B_i}(x')$, that is, $x$ and $x'$ have distinct $B_i$-incidence vectors, since at least $B_i(x,v_{\min}^i)\neq B_i(x',v_{\min}^i)$.
\end{rem}

Let $V_0'=\widetilde{V}_0=V_0$. Let $M_0'=A_1\widetilde{M}_0$ and for each $i\geq 1$, let $M_i'=A_{i+1}B_i$. Let $(V',E')$ be the simple Bratteli diagram with vertices $(V_i')_{i\in\N}$ and incidence matrices $(M_i')_{i\in\N}$. By construction, $K_0(V',E')\cong K_0(\widetilde{V}, \widetilde{E})\cong K_0(V,E)$. We call $(V',E')$ the {\em splitting of $(V,E)$ with respect to $(t_i)_{i\in\N}$ and $(q_i^{(u)})_{u\in \widetilde{V}_i, i\geq 1}$}. In \Cref{theo:low} and \Cref{theo:main} we appropriately choose the parameters $(t_i)_{i\in\N}$ and $(q_i^{(u)})_{i\geq 1,u\in \widetilde{V}_i}$ so that Condition \eqref{eq:conditionq_i} and other supplementary conditions hold.\\

\begin{rem}\label{rem:preservesers2}
Note that, thanks to the definition of the matrix $A_i$, for any $u\in \widetilde{V}_i$ and $v\in H_i^{(u)}$, $\ell_u=\ell_v$. Thus, in particular, $L_i'=\widetilde{L}_i=L_{t_i}$, $\ell_i'=\widetilde{\ell}_i=\ell_{t_i}$, which implies that the passage from $(\widetilde{V},\widetilde{E})$ to $(V',E')$ does not modify the pseudo-ERS property. Since the telescoping procedure does not modify it either, we have that if $(V,E)$ is pseudo-ERS, then so it is $(V',E')$. By the same argument, if $(V,E)$ is ERS, so it is $(V',E')$.
\end{rem}

Let $(V,E)$ be a simple Bratteli diagram and let $(V',E')$ be the splitting of $(V,E)$ with respect to $(t_i)_{i\in\N}$ and $(q_i^{(u)})_{u\in\widetilde{V}_i,i\geq 1}$. Let $i\geq 1$ and consider a vertex $x=x_j\in \widetilde{V}_{i+1}$, $1\leq j\leq |\widetilde{V}_{i+1}|$. Define

\begin{equation}\label{eq:o(x)}
\cO(x)=\frac{\left(\left(\sum_{u\in\widetilde{V_i}}\widetilde{M_i}(x,u)\right)-j-1\right)!}{\prod_{v\in V_i'\setminus
    \{v_{\min}^i,v_{\max}^i\}}B_i(x,v)!}.
\end{equation}

\begin{lemma}\label{lem:recognizable2}
    Suppose that for each $x\in \widetilde{V}_{i+1}$, $\cO(x)\geq q_{i+1}^{(x)}$. Then, it is possible to put an order $\geq$ on the edges of $(V',E')$ such that the sequence $\bt=(\tau_i:(V_{i+1}')^\star  \to (V_i')^\star)_{i\in\N}$ of read morphisms on $(V',E')$ satisfies the hypotheses of \cref{cor:suffconditions_isom}.
\end{lemma}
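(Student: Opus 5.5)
We need to construct orders on the edges of $(V',E')$ so that the read morphisms $\tau_i$ for $i\geq 1$ are proper, injective on letters, and satisfy the cutting-point condition (2) of \Cref{lem:recognizable}. The morphism at level $i$ of $(V',E')$ reads $M'_i=A_{i+1}B_i$. Because $A_{i+1}$ has a single $1$ in each row, a vertex $w\in H_{i+1}^{(x)}$ has exactly the same incoming edges as $x$ has under $B_i$. So $\tau_i(w)$ is a word over $V_i'$ in which each $v$ occurs $B_i(x,v)$ times, and the ordering on $r^{-1}(w)$ may be chosen freely for each $w$.

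\emph{The order.} For every $w\in H_{i+1}^{(x)}$, with $x=x_j$, I would set
\[
\tau_i(w)=v_{\min}^i\,(v_{\max}^i)^{\,j-1}\ldots ?
\]
More carefully, I would use the $j$ copies of $v_{\min}^i$ and the single copy of $v_{\max}^i$ as follows:
\[
\tau_i(w)=v_{\min}^i\; W_w\; (v_{\min}^i)^{j-1}\, v_{\max}^i ,
\]
where $W_w$ is a word using each $v\in V_i'\setminus\{v_{\min}^i,v_{\max}^i\}$ exactly $B_i(x,v)$ times. Its length is $\sum_u\widetilde M_i(x,u)-j-1$. The number of such distinct words is exactly $\cO(x)$, by the multinomial count in \eqref{eq:o(x)}. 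The hypothesis $\cO(x)\geq q_{i+1}^{(x)}=|H_{i+1}^{(x)}|$ then lets us assign pairwise distinct words $W_w$ to the vertices of $H_{i+1}^{(x)}$.

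\emph{Properness and injectivity on letters.} Every image starts with $v_{\min}^i$ and ends with $v_{\max}^i$, so $\tau_i$ is proper. Injectivity on letters splits into two cases. Two vertices $w,w'$ in the same $H_{i+1}^{(x)}$ have distinct $W$'s. Vertices from different $x\neq x'$ have different letter counts by \Cref{rem:distinctincidencevector}, since the numbers of $v_{\min}^i$ differ. Simplicity follows from the positivity of the matrices, which condition \eqref{eq:conditionq_i} guarantees.

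\emph{Cutting points.} I expect condition (2) to be the main obstacle. The key observation is that $v_{\max}^i$ occurs exactly once in each image, namely as its last letter. Hence in $\tau_i(x)$ for $x\in X^{(i+1)}_{\bt}$, the positions of $v_{\max}^i$ are precisely the cutting points minus one. Thus $C_{\tau_i}(x)$ equals the set of positions immediately after an occurrence of $v_{\max}^i$ in $\tau_i(x)$, and this set is determined by $\tau_i(x)$ alone. It also contains $0$, since $x_{-1}$ maps to a word ending in $v_{\max}^i$. It follows that $\tau_i(x)=\tau_i(y)$ forces $C_{\tau_i}(x)=C_{\tau_i}(y)$.

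For this to work, the only thing to verify is that $v_{\max}^i$ is not used inside $W_w$ and appears only once per image. Both hold because $B_i(x,v_{\max}^i)=1$. With all hypotheses of \Cref{cor:suffconditions_isom} satisfied, the claim follows. Level $0$ needs nothing extra, since $\tau_0$ is a hat morphism by \Cref{rem:hat}.
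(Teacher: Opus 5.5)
Your proof is correct and follows the paper's argument. You fix the $j$ copies of $v_{\min}^i$ and the single $v_{\max}^i$ as the last letter, use $\cO(x)\geq q_{i+1}^{(x)}$ to assign distinct middle words inside each $H_{i+1}^{(x)}$, separate different $x$'s by their $v_{\min}^i$-count, and read the cutting points off the unique terminal occurrence of $v_{\max}^i$. The only differences are cosmetic: the paper puts all $j$ copies of $v_{\min}^i$ at the start rather than splitting them as $1+(j-1)$. Also, you should delete your abandoned first display, since $B_i(x,v_{\max}^i)=1$ allows only one occurrence of $v_{\max}^i$.
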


\begin{proof}
    For any vertex $v\in V_{i+1}'$ such that $v\in H_{i+1}^{(x)}$ and $x=x_j$, with $1\leq j\leq |\widetilde{V}_{i+1}|$, let $\tau_i(v)$ begin with the word $(v_{\min}^i)^j$ and end with the letter $v_{\max}^i$. The intermediate portion of $\tau_i(v)$ will be determined later. With this choice, the image of any vertex $v\in V_{i+1}'$ under $\tau_i$ begins with $v_{\min}^i$ and ends with $v_{\max}^i$, so $\tau_i$ is proper. On the other hand, the cutting points of any $\tau_i(v)$ are located exactly in those places where a letter $v_{\max}^i$ is followed by a letter $v_{\min}^i$, so if $\tau_i(v)=\tau_i(v')$ then $C_{\tau_i(v)}=C_{\tau_i}(v')$. Thus, to prove that $\bt$ satisfies the hypotheses of \Cref{cor:suffconditions_isom}, it suffices to prove that it is possible to choose $\tau_i$ injective on $V'_{i+1}$. \\
    Note that, independently of the order assigned to the edges in $E_i'$, if $v_1, v_2\in V_{i+1}'$ verify $\Vec{M_i'}(v_1)\neq \Vec{M_i'}(v_2)$, that is, if $v_1$ and $v_2$ have distinct final incidence vectors, then the read morphism $\tau_i$ satisfies $\tau_i(v_1)\neq \tau_i(v_2)$. From the definition of the matrices $A_{i+1}$ and $B_i$, we deduce that if $v_1\in H_{i+1}^{(x_1)}$ and $v_2\in H_{i+1}^{(x_2)}$ with $x_1\neq x_2$, then $\tau_i(v_1)\neq \tau_i(v_2)$ (see \Cref{rem:distinctincidencevector}). Thus, to prove that it is possible to choose $\tau_i$ injective on letters, it is enough to show that, for any $x\in \widetilde{V}_{i+1}$, the number of orders that can be assigned to the edges connecting a vertex $v\in H_{i+1}^{(x)}$ with the set of vertices $V_i'\setminus \{v_{\min}^i,v_{\max}^j\}$ (those edges for which the order has not been established yet), is at least $q_{i+1}^{(x)}$. We conclude by noticing that $\cO(x)$ corresponds precisely to the maximum number of orders which is possible to assign to the edges connecting a vertex in $H_{i+1}^{(x)}$ with the vertices in $V_i'\setminus \{v_{\min}^i,v_{\max}^i\}$.
\end{proof}

A simplified version of the splitting procedure described above can be performed to assume that, up to SOE, the number of vertices in $(V,E)$ goes to infinity as the level increases, a fact we use in the proofs of \Cref{theo:low} and \Cref{theo:main}. We briefly describe this simplified procedure. This time we define the sequences $(t_i)_{i\in\N}$ and $(q_i^{(u)})_{i\geq 1, u\in \widetilde{V}_i}$ inductively. 

Let $t_0=0$ and $V_0'=V_0$, let $t_1=1$. For $i\geq 0$, suppose we have defined $t_i$. Chose and fix a vertex $u^*\in \widetilde{V}_i=V_{t_i}$. For each $u\in \widetilde{V}_i$, let
    $$q_i^{(u)}=\begin{cases}
        i & \text{ if } u=u^*\\
        1 & \text{ if } u\neq u^*
    \end{cases},$$
    and let $H_i^{(u)}$ be a set of size $q_i^{(u)}$, as before. Let $V_i'=\bigcup_{u\in \widetilde{V}_i} H_i^{(u)}$. Let $t_{i+1}>t_i$ be chosen large enough so that for each $x\in \widetilde{V}_{i+1}=V_{t_{i+1}}$,
    $$\widetilde{M}_i(x,u^*)\geq q_i^{u^*}.$$
    For $x\in \widetilde{V}_{i+1}$, we perform the Euclidean division of $M_i(x,u^*)$ by $q_i^{(u^*)}$: let $k_x$ and $r_{x}$ be the unique integers such that
    $$\widetilde{M}_i(x,u^*)=k_x q_i^{(u^*)}+r_x.$$
    Let $A_i$ be the $|V_i'|\times |\widetilde{V}_i|$ matrix defined as follows. For $u\in \widetilde{V}_i$ and $v\in V_i'$,
    $$A_i(v,u)=\begin{cases}
        1 & \text{ if } v\in H_i^{(u)}\\
        0 & \text{ else.}
    \end{cases}$$
    Let $B_i$ be the $|\widetilde{V}_i|\times |V_i'|$ matrix defined as follows. For $x\in \widetilde{V}_{i+1}$ and $v\in V_i'$, if $v\in H_i^{(u^*)}$, choose $B_i(x,v)\in \{k_x,k_x+1\}$ in such a way that
    $$\sum_{v\in H_i^{(u^*)}}B_i(x,v)=\widetilde{M}_i(x,u^*).$$
    If $v\not\in H_i^{(u^*)}$, set $B_i(x,v)=\widetilde{M}_i(x,u)$, where $u\neq u^*$ is such that $\{v\}= H_i^{(u)}$.\\
    Let $M_0'=A_1\widetilde{M}_0$ and for each $i\geq 1$, let $M_i'=A_{i+1}B_i$. Let $(V', E')$ be the simple Bratteli diagram with sequence of vertices $(V_i')_{i\in\N}$ and sequence of incidence matrices $(M_i')_{i\in\N}$. By construction, $K_0(V,E)\cong K_0(V',E')$ and for each $i\geq 1$, $m_i:=|V_i'|\geq i$. Thus, in $(V',E')$ the number of vertices on each level goes to infinity. We have proved the following.

    \begin{proposition}\label{prop:unbounded}
        For each simple Bratteli diagram $(V,E)$ there exists a simple Bratteli diagram $(V',E')$ such that $K_0(V,E)\cong K_0(V',E')$ and the sequence of number of vertices $(|V'_i|)_{i\in\N}$ goes to infinity.
    \end{proposition}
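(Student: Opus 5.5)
The statement records the outcome of the simplified splitting procedure described just before it. The proof therefore consists of checking that the construction is well defined and has the three claimed properties.

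\textbf{Step 1: the construction is well defined.} I would first replace $(V,E)$ by a telescoping whose incidence matrices are all strictly positive; this is possible since $(V,E)$ is simple, and it does not change $K_0$.

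Next I need that for a fixed level $t_i$ and vertex $u^*\in V_{t_i}$, the entries $M_{[t_i,s)}(x,u^*)$ tend to infinity as $s\to\infty$, uniformly in $x\in V_s$. This is the step needing some care. Since all matrices are positive, $M_{[t_i,s)}(x,u^*)$ is nondecreasing in $s$ for every path extension. If it stayed bounded along some sequence of vertices, there would be an infinite path whose only branching below level $t_i$ is trivial. Since $X_B$ is assumed infinite (a Cantor space, so without isolated points), I would argue that this forces an isolated point, a contradiction. Alternatively, one can telescope further until every vertex has at least two incoming edges from each lower vertex, which gives exponential growth of the entries.

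With this growth in hand, one can choose $t_{i+1}>t_i$ such that $\widetilde M_i(x,u^*)\ge q_i^{(u^*)}=i$ for all $x\in\widetilde V_{i+1}$.

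\textbf{Step 2: dimension group and simplicity.} I would verify the factorization $B_iA_i=\widetilde M_i$ entrywise:
$$(B_iA_i)(x,u)=\sum_{v\in H_i^{(u)}}B_i(x,v)=\widetilde M_i(x,u),$$
by the choice of $B_i$ on $H_i^{(u^*)}$ and on the singletons $H_i^{(u)}$. Then I form the interleaved diagram with levels $\widetilde V_0, V_1', \widetilde V_1, V_2', \widetilde V_2,\dots$ and incidence matrices $A_1\widetilde M_0$, then $B_1$, $A_2$, $B_2$, and so on. Telescoping it to the levels $\widetilde V_i$ gives $(\widetilde V,\widetilde E)$, and telescoping to the levels $V_i'$ gives $(V',E')$ with $M_i'=A_{i+1}B_i$. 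Hence, by the telescoping/microscoping criterion recalled in \Cref{sec:BDandSadic}, $K_0(V',E')\cong K_0(\widetilde V,\widetilde E)\cong K_0(V,E)$, and the unit is preserved since level $0$ is untouched.

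For simplicity: every row of $A_{i+1}$ contains exactly one $1$. Every entry of $B_i$ is at least $\min(k_x,\widetilde M_i(x,u))\ge1$, using $\widetilde M_i(x,u^*)\ge i$ and the positivity of $\widetilde M_i$. Hence $M_i'$ is strictly positive.

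\textbf{Step 3: growth of the number of vertices.} Finally, $|V_i'|=|\widetilde V_i|-1+i\ge i$, so $|V_i'|\to\infty$.

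The only nontrivial point is the growth argument in Step 1. Everything else is bookkeeping on the matrices $A_i$ and $B_i$.
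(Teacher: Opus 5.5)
Your proposal is correct and follows the paper's own argument, the simplified splitting procedure (split a chosen vertex of $\widetilde V_i$ into $i$ copies, factor $\widetilde M_i=B_iA_i$, and set $M_i'=A_{i+1}B_i$), and it fills in details the paper leaves implicit: the choice of $t_{i+1}$, the positivity of the new matrices, and $|V_i'|\ge i$. One harmless indexing slip: since $B_i$ goes from $V_i'$ to $\widetilde V_{i+1}$, the interleaved levels should read $\widetilde V_0, V_1', \widetilde V_2, V_2', \widetilde V_3,\dots$, and telescoping this to the $\widetilde V$-levels gives a telescoping of $(\widetilde V,\widetilde E)$, so the conclusion is unaffected.
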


\section{Proof of the main results}\label{sec:proofs}

We start by proving a combinatorial lemma that will be used in the proofs of \Cref{theo:low} and \Cref{theo:main}.

\begin{lemma}\label{lem:stirling2}
Let $d\in\N$ and $m_1,\ldots, m_d$ be positive integers, $m_u\geq 2$. For each $1\leq u\leq d$, let $a_{1u}, a_{2u}, \ldots , a_{m_uu}$ be positive integers, and define $N_u=\sum_{v=1}^{m_u}a_{vu}$, $A=\sum_{u=1}^d N_u$. Suppose that for all $1\leq u\leq d$, $m_u-1\leq N_u/a_{vu}$. 
Then, provided the $a_{vu}$'s are large enough with respect to $m_u,d$, one has
$$\log\left(\frac{A!}{\prod_{u=1}^d\prod_{v=1}^{m_u}a_{vu}!}\right)\geq \sum_{u=1}^d N_u(\log(A/N_u)+\log(m_u-1))+\varepsilon(A),$$
where $\varepsilon(A)=O_{d,M}(\log(A))$ and $M=\max_{1\leq u\leq d}\{m_u\}$.
\end{lemma}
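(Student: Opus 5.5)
The plan is to factor the multinomial into an outer multinomial over the blocks $u$ and inner multinomials within each block:
$$\frac{A!}{\prod_{u}\prod_{v}a_{vu}!}=\frac{A!}{\prod_{u=1}^d N_u!}\cdot\prod_{u=1}^d\frac{N_u!}{\prod_{v=1}^{m_u}a_{vu}!}.$$
I will bound each factor separately using Stirling's formula in the form $\log n! = n\log n - n + O(\log n)$ for $n\geq 1$.

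For the outer factor, Stirling gives
$$\log\frac{A!}{\prod_u N_u!}=\sum_u N_u\log(A/N_u)+O(d\log A),$$
because the linear terms cancel ($\sum_u N_u=A$) and each of the $d+1$ error terms is $O(\log A)$, since $N_u\leq A$. This already produces the term $\sum_u N_u\log(A/N_u)$, with an error of $O_d(\log A)$.

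For each inner factor, set $p_v=a_{vu}/N_u$. Stirling gives
$$\log\frac{N_u!}{\prod_v a_{vu}!}=N_u H(p)+O(m_u\log N_u),\qquad H(p)=-\sum_v p_v\log p_v .$$
Here the error is $O_M(\log A)$, and summing over $u$ gives $O_{d,M}(\log A)$. It then suffices to show $H(p)\geq \log(m_u-1)$. The hypothesis $m_u-1\leq N_u/a_{vu}$ says exactly that $p_v\leq 1/(m_u-1)$ for every $v$. So $-\log p_v\geq \log(m_u-1)$ for each $v$, and hence
$$H(p)=\sum_v p_v(-\log p_v)\geq \log(m_u-1)\sum_v p_v=\log(m_u-1).$$
Combining this with the outer estimate yields the claimed inequality, with $\varepsilon(A)=O_{d,M}(\log A)$.

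The only point that needs care is making the Stirling errors uniform. The hypothesis that the $a_{vu}$ are large relative to $m_u$ and $d$ ensures every factorial argument is at least $1$. With that, the crude bounds $|\log n!-(n\log n-n)|\leq C\log n$ (valid for $n\geq 2$) and $a_{vu},N_u\leq A$ make all remainders $O(\log A)$, with the number of terms bounded by $d+1+dM$. One subtlety is the sign: only a lower bound is needed, so the remainder terms may be absorbed into a negative $\varepsilon(A)$ of size $O_{d,M}(\log A)$. This bookkeeping is the main (mild) obstacle; the entropy inequality $H(p)\geq\log(m_u-1)$ is the key conceptual step.
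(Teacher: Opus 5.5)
Your proposal is correct and is essentially the paper's argument: the paper applies Stirling directly to $A!$ and to each $a_{vu}!$, then splits $\log(A/a_{vu})=\log(A/N_u)+\log(N_u/a_{vu})$, which is exactly your outer/inner factorization (your extra $\log N_u!$ terms simply cancel), and your bound $H(p)\geq\log(m_u-1)$ is the paper's termwise use of $N_u/a_{vu}\geq m_u-1$. The only cosmetic point is that the Stirling remainder should be written as $O(1+\log n)$ or you should assume $a_{vu}\geq 2$; your "large enough" hypothesis already covers this.
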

\begin{proof}
    According to the Stirling formula, 
    $$\log(A!)=A\log(A)-A+O(\log(A))$$
    and 
    $$\log(a_{vu}!)=a_{vu}\log(a_{vu})-a_{vu}+O(\log(a_{vu})).$$
    Since $d$ and the $m_u$'s are fixed, there exists $c>0$ such that
    $$-cd\max_{1\leq u\leq d}\{m_u\}O(\log(A))\leq \sum_{u=1}^d\sum_{v=1}^{m_u}O(\log(a_{vu}))\leq cd\max_{1\leq u\leq d}\{m_u\}O(\log(A)),$$
    which means that $\sum_{u=1}^d\sum_{v=1}^{m_u}O(\log(a_{vu}))=O_{d,M}(\log(A))$. Thus,

    $$\log\left(\frac{A!}{\prod_{u=1}^d\prod_{v=1}^{m_u}a_{vu}!}\right)=A\log(A)-A-\sum_{u=1}^d\sum_{v=1}^{m_u}(a_{vu}\log(a_{vu})-a_{vu})+O_{d,M}(\log(A)).$$
  Since $A=\sum_{u=1}^dN_u=\sum_{u=1}^d\sum_{v=1}^{m_u}a_{vu}$, we obtain
  \begin{align*}
  \log\left(\frac{A!}{\prod_{u=1}^d\prod_{v=1}^{m_u}a_{vu}!}\right) &=    
  \sum_{u=1}^d\sum_{v=1}^{m_u}a_{vu}(\log(A)-\log(a_{vu}))+O_{d,M}(\log(A))\\
  &=\sum_{u=1}^d\sum_{v=1}^{m_u}a_{vu}(\log(A/N_u)+\log(N_u/a_{vu}))+O_{d,M}(\log(A)).
  \end{align*}
      
  Since $N_u/a_{vu}\geq m_u-1$, we get
  $$\log\left(\frac{A!}{\prod_{u=1}^d\prod_{v=1}^{m_u}a_{vu}!}\right)\geq\sum_{u=1}^dN_u(\log(A/N_u)+\log(m_u-1))+O_{d,M}(\log(A)).$$
    
\end{proof}

\begin{proof}[Proof of \cref{theo:pseudoers}]

We will inductively define a sequence of telescoping depths $(t_i)_{i\in\N}$ and a sequence of intermediate levels to construct the diagram $(V',E')$. 
Let $(M_i)_{i\in\N}$ be the sequence of incidence matrices of $(V,E)$.\\
For $i\in\{0,1\}$, let $t_i=i$. Let also $V_0'=V_0$.\\
Let $i\geq 1$. Suppose we have defined $t_i$. We will define $t_{i+1}$ and $V_{i}'$. Let $\widetilde{V}_i=V_{t_i}$. For each $u\in \widetilde{V}_i$, let $\ell_u=M_{[0,t_i)}(u)$, that is, the number of paths connecting $u$ to the top vertex, and denote $\widetilde{L}_i=L_{t_i}$. Define $K_i>(f(2|\widetilde{V}_i|)+1)\widetilde{L}_i$ large enough so that
\begin{equation}\label{eq:conditionK_i}
    (K_i+\widetilde{L}_i)h(K_i+\widetilde{L}_i)\geq 2\widetilde{L}_i
\end{equation}

Such a $K_i$ always exists since $ih(i)\nearrow +\infty$ and $, \widetilde{V}_i$, $\widetilde{L}_i$ only depends on the parameters of level $i$.\\
For each $u\in \widetilde{V}_i$ define $K_i^{(u)}=\lfloor\frac{K_i}{\ell_u}\rfloor$. From the B\'ezout Theorem, it follows that if $p,q\in\N^\star$ are coprime and $M\geq pq+1$, there exist $n,m\in\N^\star$ such that $M=np+mq$. Thus, there exists $t^\star>t_i$ such that for all $t>t^\star$, $\min_{u\in \widetilde{V}_i,v\in V_t}\{M_{[t_i,t)}(v,u)\}>K_i$ and for each $u\in \widetilde{V}_i, v\in V_t$, $\exists n_{u,v}, m_{u,v}\in\N^\star$ such that 
$$M_{[t_i,t)}(v,u)=n_{u,v}K_i^{(u)}+m_{u,v}(K_i^{(u)}+1).$$
Define $t_{i+1}=t^\star$ and $\widetilde{V}_{i+1}=V_{t_{i+1}}$. For each $u\in\widetilde{V}_i$, let $H_i^{(u)}=\{w_u^1, w_u^2\}$ be a set of two vertices, let $V_i'=\bigcup_{u\in\widetilde{V}_u}H_i^{(u)}$. Note that $|V_i|=2|\widetilde{V}_i|$. The set $V_i'$ is an intermediate set of vertices between $\widetilde{V}_i$ and $\widetilde{V}_{i+1}$.\\
For each $i\in\N$, we define two matrices, $A_i$ and $B_i$, connecting vertices in $V_i'$ with those in $\widetilde{V}_i$ and vertices in $\widetilde{V}_{i+1}$ with those in $V_i'$, respectively, as follows. For each $u\in\widetilde{V}_i, w\in V_i'$,
$$
A_i(w,u)= \begin{cases} K_i^{(u)} & \text{ if } w=w_u^1 \\
(K_i^{(u)}+1) & \text{ if } w=w_u^2\\
0 & \text{ else.}
\end{cases}
$$
For each $w\in V'_i, v\in \widetilde{V}_{i+1}$,
$$
B_i(v,w)= \begin{cases} n_{u,v} & \text{ if } w=w_u^1\text{ for some } u\in \widetilde{V}_i \\
m_{u,v} & \text{ if } w=w_u^2 \text{ for some} u\in \widetilde{V}_i. 
\end{cases}
$$

Let $(M_i')_{i\geq 0}$ be the sequence of incidence matrices defined by $M_0'=A_1M_0$ and $M_i'=A_{i+1}B_i$ for all $i\geq 1$. Let $(V',E')$ be the diagram given by the matrices $(M_i')_{i\in\N}$. By construction, $M_i=B_iA_i$ for all $i\in\N$, which implies that $K_0(V,E)\cong K_0(V',E')$. This proves the first item of the statement.\\
We now prove that for all $i\geq 1$, $r\in V'_{i-1}$ and $s\in V'_i$, $M'_{i-1}(s,r)\geq f(|V'_i|)$. Note that, by construction, 
$$M_{i-1}(s,r)\in\{K_i^{(v)}n_{u,v},(K_i^{(v)}+1)n_{u,v},K_i^{(v)}m_{u,v},(K_i^{(v)}+1)m_{u,v}\},$$ where $u\in \widetilde{V}_{i-1}$ and $v\in \widetilde{V}_i$ are such $r\in H_{i-1}^{(u)}$ and $s\in H_i^{(v)}$. Since $n_{u,v}, m_{u,v}$ are positive, this implies that
$$M'_{i-1}(s,r)\geq K_i^{(v)}\geq \frac{K_i}{\ell_v}-1\geq \frac{K_i-\widetilde{L}_i}{\widetilde{L}_i}.$$
Since $K_i>(f(2|\widetilde{V}_i|)+1)\widetilde{L}_i$, we get that
$$M'_{i-1}(s,r)\geq\frac{f(2|\widetilde{V}_i|)+1)\widetilde{L}_i-\widetilde{L}_i}{\widetilde{L}_i}=f(2|\widetilde{V}_i|)=f(|V'_i|).$$
This proves the second item of the statement.\\
Finally, we prove that $(V',E')$ is pseudo-ERS with the prescribed speed of convergence of $\frac{\ell'_i}{L'_i}$ towards $1$. Let $i\geq 1$, let $w\in V'_i$ such that $\ell'_i=\ell_w$ and let $u\in \widetilde{V}_i$ such that $w\in H_i^{(u)}$. Then, we have that $\ell'_i$ belongs to $\{K_i^{(u)}\ell_u,(K_i^{(u)}+1)\ell_u\}$. In particular,
$$\ell'_i\geq K_i^{(u)}\ell_u\geq \left(\frac{K_i}{\ell_u}-1\right)\ell_u=K_i-\ell_u\geq K_i-\widetilde{L}_i.$$
On the other hand, if $x\in V'_i$ is such that $L'_i=\ell_x$ and $v\in\widetilde{V}_i$ is such that $x\in H_i^{(v)}$, we have that $L'_i$ belongs to $\{K_i^{(v)}\ell_v,(K_i^{(v)}+1)\ell_v\}$. In particular,
$$L'_i\leq (K_i^{(v)}+1)\ell_v\leq \left(\frac{K_i}{\ell_v}+1\right)\ell_v=K_i+\ell_v\leq K_i+\widetilde{L}_i.$$
We thus get that
$$\frac{\ell'_i}{L'_i}\geq \frac{K_i-\widetilde{L}_i}{K_i+\widetilde{L}_i}=1-\frac{2\widetilde{L}_i}{K_i+\widetilde{L}_i}.$$
By \Cref{eq:conditionK_i}, we obtain that 
$$\frac{\ell'_i}{L'_i}\geq 1-h(K_i+\widetilde{L}_i).$$
Since $L'_i\leq K_i+\widetilde{L}_i$ and $h$ is decreasing, $h(K_i+\widetilde{L}_i)\leq h(L'_i)$ and thus we get that $\frac{\ell'_i}{L'_i}\geq 1-h(L_i')$. This proves the third item of the statement and concludes the proof of the theorem.\\

\end{proof}

Before giving the proof of \Cref{theo:low} we prove the following preparatory lemma.

\begin{lemma}\label{lem:m_i}
    Let $(V,E)$ be a simple Bratteli diagram, let $F\colon \N^\star \to\N^\star$ be a function satisfying $\lim_{i\to\infty}F(i)\nearrow+\infty$ and $\lim_{i\to\infty}\frac{F(i)}{i}=0$, and let $f\colon \N^\star\to\R_+$ be any increasing function. There exists a pseudo-ERS Bratteli diagram $(V',E')$ with sequence of incidence matrices $(M'_i)_{i\in\N}$ such that the following conditions hold,
    
 \begin{enumerate}
    \item $K_0(V,E)\cong K_0(V',E')$,
    \item for all $i\geq 1$, $u\in V'_{i-1}$ and $v\in V'_i$,
    $M'_{i-1}(v,u)\geq f(|V'_i|)$,
    \item for all $i\geq 1$, $\frac{\ell'_i}{L'_i}\geq 1-\frac{1}{F(L_i')}$.
    \end{enumerate}
    Moreover, the sequence of positive integers $(m_i)_{i\in\N}$ defined by $m_i=F(L_i')$ verifies that $m_iL'_i\leq (m_i+1)\ell'_i$ for all $i\in\N$.
\end{lemma}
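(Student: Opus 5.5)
The natural approach is to apply \Cref{theo:pseudoers} directly with a suitably chosen function $h$ built from $F$. Set $h(i)=\frac{1}{F(i)+1}$, or more carefully a decreasing minorant of it. Since $F(i)\nearrow+\infty$, $h$ is decreasing and tends to $0$. Since $F(i)/i\to 0$, we get $ih(i)=\frac{i}{F(i)+1}\to+\infty$. So the hypotheses of \Cref{theo:pseudoers} hold, and it provides a simple pseudo-ERS diagram $(V',E')$ satisfying (1) and (2) verbatim, together with
$$\frac{\ell'_i}{L'_i}\geq 1-\frac{1}{F(L'_i)+1}.$$
As $\frac{1}{F(L'_i)+1}\leq \frac{1}{F(L'_i)}$, condition (3) follows.

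For the ``moreover'' part, put $m_i=F(L'_i)$. The bound above rewrites as
$$\frac{\ell'_i}{L'_i}\geq \frac{F(L'_i)}{F(L'_i)+1}=\frac{m_i}{m_i+1},$$
that is, $(m_i+1)\ell'_i\geq m_iL'_i$. This is exactly why I use $h=1/(F+1)$ rather than $h=1/F$: with $h=1/F$ one only gets $\ell'_i\geq \frac{m_i-1}{m_i}L'_i$, which is weaker than needed.

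The main obstacle is purely technical, namely making $h$ valid for \Cref{theo:pseudoers}. That theorem requires $h$ decreasing and $h\colon\N^\star\to\R_+$. If $F$ is only eventually increasing, or merely nondecreasing, I would replace $F$ by $\widetilde F(i)=\min_{j\geq i}F(j)$, which is nondecreasing, tends to infinity, satisfies $\widetilde F\leq F$ and $\widetilde F(i)/i\to0$. Then $h=1/(\widetilde F+1)$ is nonincreasing and gives $1-h(L'_i)\geq 1-\frac{1}{F(L'_i)}$ only in the weaker direction; that suffices for (3), but for the ``moreover'' part it only yields the inequality with $\widetilde F(L'_i)$ in place of $F(L'_i)$.

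I would therefore read the hypothesis ``$F(i)\nearrow+\infty$'' as asserting that $F$ is monotone nondecreasing, so that no modification is needed. Inspecting the proof of \Cref{theo:pseudoers}, non-strict monotonicity of $h$ suffices there: only $h(K_i+\widetilde L_i)\leq h(L'_i)$ is used. Finally, the case $i=0$ is trivial since $L'_0=\ell'_0=1$, and pseudo-ERS follows from $m_i\to\infty$ because $L'_i\to\infty$.
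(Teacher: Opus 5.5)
Your proposal is correct and follows the paper's proof. The paper also applies \Cref{theo:pseudoers} with $h(i)=\frac{1}{F(i)+1}$, gets (3) from $\frac{1}{F(L'_i)+1}\leq\frac{1}{F(L'_i)}$, and rewrites the bound as $\frac{\ell'_i}{L'_i}\geq\frac{m_i}{m_i+1}$. Your side remarks, that non-strict monotonicity of $h$ suffices and that the case $i=0$ is trivial, are sound points the paper leaves implicit.
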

\begin{proof}
    Applying \Cref{theo:pseudoers} to $(V,E)$ with the function $h(i)=\frac{1}{F(i)+1}$, we obtain immediately items (1) and (2). Note that we can apply \Cref{theo:pseudoers} in this case since $h(i)$ goes to $0$ and $ih(i)$ diverges to $+\infty$ as $i$ goes to $\infty$. The third item in \Cref{theo:pseudoers} implies that for all $i\geq 1$,
    $$\frac{\ell'_i}{L'_i}\geq 1-\frac{1}{F(L'_i)+1}\geq 1-\frac{1}{F(L'_i)}.$$
    This proves item (3). Note that the inequality $\frac{\ell'_i}{L'_i}\geq 1-\frac{1}{F(L'_i)+1}$ is rewritten as
    $$\frac{\ell'_i}{L'_i}\geq\frac{F(L'_i)+1-1}{F(L'_i)+1}=\frac{m_i}{m_i+1},$$
    which implies that $m_iL'_i\leq (m_i+1)\ell'_i$. This concludes the proof of the lemma.
\end{proof}

\begin{proof}[Proof of \cref{theo:low}]
Let $(X,T)$ be a minimal Cantor system and let $(V,E)$ be a simple Bratteli diagram such that $K_0(V,E)\cong K^0(X,T)$. Suppose the hypotheses of the statement are satisfied. 
Thanks to\cref{prop:unbounded} we may assume that $(|V_i|)_{i\in\N}$ goes to $\infty$ as $i$ increases, and thanks to \cref{lem:m_i} we may also assume that $(V,E)$ is pseudo-ERS and that for any increasing function $f$, the number of paths from the top vertex to any vertex at level $V_i$ is greater than $f(|V_i|)$. Moreover, applying \cref{lem:m_i} to the function $F$ which satisfies \cref{eq:suffcondlow}, we know that the sequence $m_i=F(L_i)$ will verify $m_iL_i\leq (m_i+1)\ell_i$. Choose any increasing function $\widetilde{f}\colon \R_+\to\R_+$ which grows fast enough so that 

\begin{equation}\label{eq:condition_f}
    \widetilde{f}^{-1}(n)\leq \frac{n}{F(n)+2}
\end{equation}
for all $n$ large. Let $f$ be the restriction of $\widetilde{f}$ to $\N^\star$. This technical condition will be used to prove that $\liminf_{n\infty}\frac{p_Y(n)}{g_n}=0$.\\

For a given sequence of telescoping depths $(t_i)_{i\in\N}$, $(\widetilde{V},\widetilde{E})$ the telescoping of $(V,E)$ to $(t_i)_{i\in\N}$ and $i\geq 1$, choose and fix a vertex $\overline{u}\in\widetilde{V}_i$ as in \Cref{sec:splitting}. Recall that for each $u\in\widetilde{V}_i$, $\ell_u$ denotes the number of paths from the top vertex to $u$. For $u\in\widetilde{V}_i$ and $x=x_j\in \widetilde{V}_{i+1}$ with $1\leq j\leq |\widetilde{V}_{i+1}|$, define 
$$N_{x,u}= \begin{cases}
   \widetilde{M}_i(x,u) & \text{ if } u\neq \overline{u}\\
   \widetilde{M}_i(x,u)-j-1 & \text{ if } u=\overline{u}
\end{cases},$$
Define also $A_x=\sum_{u\in\widetilde{V}_i}N_{x,u}$.\\

We now inductively define the appropriate sequences $(t_i)_{i\in\N}$ and $(q_i^{(u)})_{i\geq 1, u\in\widetilde{V}_i}$ and then apply the splitting procedure described in \cref{sec:splitting}. 

Let $t_0=0, t_1=1$. Let $i\geq 1$ and suppose we have defined $t_{i}$, thus $\widetilde{V}_i$ is defined. For each $u\in\widetilde{V}_i$, define
\begin{equation}\label{eq:qilow}
    q_i^{(u)}=\lfloor\alpha ^{\ell_u}\rfloor+4.
\end{equation}

\begin{lemma}\label{lem:telescoping_low}
    It is possible to choose $t_{i+1}>t_i$ in such a way that the following condition hold,
    \begin{enumerate}
        \item $|\widetilde{V}_{i+1}|>\max_{u\in\widetilde{V}_i}\{q_i^{(u)}\}$,
        \item  for all $x\in \widetilde{V}_{i+1}$, \begin{equation}\label{eq:o(x)general}
         \log\left (\frac{\left(\sum_{u\in\widetilde{V}_i} N_{x,u}\right)!}{(\prod_{v\in H_i^{(\overline{u})*}}B_i(x,v)!)(\prod_{u\in\widetilde{V}_i\setminus\{\overline{u}\}}\prod_{v\in H_i^{(u)}}B_i(x,v)!)}\right)\geq \sum_{u\in\widetilde{V}_i}N_{x,u}(\log(A_x/N_{x,u})+\log(q_i^{(u)}-3))+\varepsilon(A_x),
    \end{equation} and
    \item for all $x\in \widetilde{V}_{i+1}$,
$$\gamma_x:=\sum_{u\in\widetilde{V}_i}N_{x,u}\log(A_x/N_{x,u})+\varepsilon(A_x)-\frac{4}{\alpha^{\ell_x}}-\widetilde{L}_i\log(\alpha)(|\widetilde{V}_{i+1}|+1)>0,$$
    \end{enumerate}
     where $H_i^{(\overline{u})*}\setminus \{v_{\min}^i, v_{\max}^i\}$, $\varepsilon(A_x)=O_{|\widetilde{V}_i|,M(i)}(\log(A_x))$ and $M(i)=\max_{u\in\widetilde{V}_i}\{q_i^{(u)}\}$.
\end{lemma}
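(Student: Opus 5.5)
The plan is to choose $t_{i+1}$ by taking it large along the original diagram $(V,E)$, which by the reductions made at the start of the proof of \cref{theo:low} has $|V_t|\to\infty$, positive incidence matrices and $M'_{t-1}(v,u)\geq f(|V_t|)$. At level $i$, the quantities $\widetilde{V}_i$, $\ell_u$, $q_i^{(u)}=\lfloor\alpha^{\ell_u}\rfloor+4$, $M(i)$ and $\widetilde{L}_i$ are all fixed. As $t\to\infty$, both $|V_t|$ and $\min_{x,u}M_{[t_i,t)}(x,u)$ tend to infinity. Item (1) then holds as soon as $|V_t|>M(i)$. We may also assume that $t$ is large enough that condition \eqref{eq:conditionq_i} holds, namely $\widetilde{M}_i(x_j,u)\geq q_i^{(u)}+j-1$, so that the splitting matrices $B_i$ are well defined. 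This needs some care, because $j$ ranges up to $|\widetilde{V}_{i+1}|$, which also grows with $t$. I would use the bound on entries by $f$ of the number of vertices: telescoping preserves the property $M(x,u)\geq f(|V_{t}|)$ (\cref{rem:preservesedgesvsvertices}), and $f$ may be taken with $f(n)\geq 2n+M(i)$, so $\widetilde{M}_i(x_j,u)\geq f(|\widetilde{V}_{i+1}|)\gg q_i^{(u)}+j$.

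For item (2), I would apply \cref{lem:stirling2} with $d=|\widetilde{V}_i|$. For $u\neq\overline{u}$ take $m_u=q_i^{(u)}$, and for $u=\overline{u}$ take $m_u=q_i^{(\overline{u})}-2$. The $a_{vu}$ are the values $B_i(x,v)$, so that $N_u=N_{x,u}$ and $A=A_x$. The hypothesis $m_u-1\leq N_u/a_{vu}$ follows because each $B_i(x,v)\in\{k,k+1\}$ with $N_u\geq m_uk$. This gives $N_u/a_{vu}\geq m_uk/(k+1)\geq m_u-1$ once $k\geq m_u-1$, which holds for large $t$. Replacing $\log(m_u-1)$ by the smaller quantity $\log(q_i^{(u)}-3)$ yields \eqref{eq:o(x)general}. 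The error term is $O_{d,M(i)}(\log A_x)$ with constants depending only on level $i$, and "large enough $a_{vu}$" is again guaranteed by taking $t$ large, since $k_{x,u}\to\infty$ with the entries.

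Item (3) is the main obstacle, because the subtracted term $\widetilde{L}_i\log(\alpha)(|\widetilde{V}_{i+1}|+1)$ grows with the number of vertices at the new level. I would first bound the positive part from below. Take $u_0$ with $N_{x,u_0}\leq A_x/2$; this is possible since $d\geq2$, after the vertex count has been made to tend to infinity. Then $\sum_u N_{x,u}\log(A_x/N_{x,u})\geq N_{x,u_0}\log 2\geq c\,\widetilde{M}_i(x,u_0)$, and the latter is at least $c f(|\widetilde{V}_{i+1}|)$. The term $4/\alpha^{\ell_x}$ is at most $4$, and $\varepsilon(A_x)=O(\log A_x)$. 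Since $A_x\leq d\max_u\widetilde{M}_i(x,u)$, I would rewrite the error as $O(\log \max_u \widetilde{M}_i(x,u))$, which is negligible compared to the min entry provided the ratio of max to min entries in a row is controlled. Because this ratio is not a priori bounded, I would instead use the concavity of the entropy sum: $\sum_u N_u\log(A/N_u)\geq N_{\min}\log(A/N_{\min})\geq N_{\min}\log d'$. Here the error is only $O(\log A)$, and I expect to handle it by the choice of $f$ together with primitivity, which gives comparable entries, for instance by first telescoping so that the columns of $M_{[t_i,t)}$ become asymptotically proportional. Choosing $f$ growing faster than any linear function of $n$ then makes $c f(|\widetilde{V}_{i+1}|)$ dominate $\widetilde{L}_i\log\alpha\,(|\widetilde{V}_{i+1}|+1)+4+|\varepsilon(A_x)|$ for $t$ large, which gives $\gamma_x>0$.
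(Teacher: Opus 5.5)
Your items (1) and (2) follow the paper exactly. You use the same application of \cref{lem:stirling2} with $m_u=q_i^{(u)}$, resp.\ $q_i^{(\overline{u})}-2$, and your check $N_u/a_{vu}\geq m_uk/(k+1)\geq m_u-1$ for $k\geq m_u-1$ is correct. The gap is in item (3), at the step where the difficulty lies. The error term $\varepsilon(A_x)$ is only known to satisfy $|\varepsilon(A_x)|\leq k_i\log(A_x)$, with $k_i$ a level-$i$ constant, and you never show the main term beats it. Once you bound $\sum_u N_{x,u}\log(A_x/N_{x,u})$ below by $N_{\min}\log 2$ (or $N_{\min}\log d$), $A_x$ has disappeared from the positive side. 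Then $N_{\min}\log 2>k_i\log A_x$ does not follow from $N_{\min}\gtrsim f(|\widetilde{V}_{i+1}|)$ alone. You leave this as an expectation, and the mechanism you suggest does not work. Telescoping until the columns of $M_{[t_i,t)}$ become asymptotically proportional would force all normalized rows of $M_{[t_i,t)}$ towards a common vector. That is a unique-ergodicity-type property: it fails for general simple diagrams, e.g.\ with several ergodic measures, which is exactly the situation of \cref{cor:alphaentropymeasureslow}, and no telescoping can create it. As written, item (3) is not proved.

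The step can be closed in two ways. First, your intuition that positivity gives comparable entries is right, but for an elementary reason, and your claim that the row ratio is ``not a priori bounded'' is wrong. After \cref{lem:m_i} all incidence matrices are positive, so for every large $t$ and $x\in V_t$,
\[
\frac{M_{[t_i,t)}(x,u)}{M_{[t_i,t)}(x,u')}=\frac{\sum_{w}M_{[t_i+1,t)}(x,w)M_{t_i}(w,u)}{\sum_{w}M_{[t_i+1,t)}(x,w)M_{t_i}(w,u')}\leq C_i:=\max_{w,u,u'}\frac{M_{t_i}(w,u)}{M_{t_i}(w,u')}.
\]
Hence $\log A_x\leq\log N_{\min}+O_i(1)$, and your estimate goes through with $f$ superlinear. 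Second, and this is essentially the paper's route, keep the factor $\log(A_x/N_{\min})$ instead of replacing it by $\log 2$. Since $N_{\min}\leq A_x/2$, there are two cases:
\begin{enumerate}
\item if $N_{\min}\leq\sqrt{A_x}$, then $N_{\min}\log(A_x/N_{\min})\geq\frac{1}{2}N_{\min}\log A_x$;
\item if $N_{\min}>\sqrt{A_x}$, then $N_{\min}\log(A_x/N_{\min})\geq N_{\min}\log 2>\sqrt{A_x}\log 2$.
\end{enumerate}
In both cases the main term absorbs $k_i\log A_x$ as well as $\widetilde{L}_i\log(\alpha)(|\widetilde{V}_{i+1}|+1)+4$. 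The paper runs the same dichotomy on the second largest value of the $N_{x,u}$. It treats separately the case where all $N_{x,u}$ coincide, where the main term is $A_x\log|\widetilde{V}_i|$, and takes $f$ very fast, e.g.\ $f(n)\geq e^n$. A minor point: $f$ is fixed once and for all before the construction, so you cannot require $f(n)\geq 2n+M(i)$. This is not needed, since item (1) together with $f(n)\geq 2n-1$ already gives \eqref{eq:conditionq_i}, as in the paper.
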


\begin{proof}
    We prove that by choosing $f$ in \Cref{lem:m_i} growing fast enough and for $t>t_i$ sufficiently large, the conclusion holds when setting $t_{i+1}=t$. We then set $t_{i+1}=t$ for any such an appropriate $t$.\\
    The inequality $|\widetilde{V}_{i+1}|>\max_{u\in\widetilde{V}_i}\{q_i^{(u)}\}$ follows from the fact that the number of vertices of each level in $(V,E)$ is not uniformly bounded (\Cref{prop:unbounded}) and the quantities $q_i^{(u)}$ only depend on the parameters of level $i$, so it suffices to take $t_{i+1}>t_i$ big enough and condition (1) will hold.\\
    To prove condition (2) we apply \Cref{lem:stirling2} to $d=|\widetilde{V}_i|$, 
    $$m_u=\begin{cases}
        q_i^{(u)} & \text{ if } u\neq \overline{u},\\
        q_i^{(u)}-2 & \text{ if } u=\overline{u},
    \end{cases}$$
   $a_{vu}=B_i(x,v)$ for $v\in H_i^{(u)}$, $N_u=N_{x,u}$ and $A=A_x$. Note that this is possible since by construction (telescoping if needed)
   $$\frac{N_{x,u}}{B_i(x,v)}\geq \frac{N_{x,u}}{k_{x,u}+1}\geq m_u-1,$$
   and by choosing a function $f$ in \Cref{lem:m_i} which grows fast enough, the quantities $N_{x,u}$'s are as bigger than $d,m_u$ as we want, which is not affected by telescoping. Applying \Cref{lem:stirling2}, we obtain

\begin{align*}
    \log\left (\frac{\left(\sum_{u\in\widetilde{V}_i} N_{x,u}\right)!}{(\prod_{v\in H_i^{(\overline{u})*}}B_i(x,v)!)(\prod_{u\in\widetilde{V}_i\setminus\{\overline{u}\}}\prod_{v\in H_i^{(u)}}B_i(x,v)!)}\right)\geq & N_{x,\overline{u}}(\log(A_x/N_{x,\overline{u}})+\log(q_i^{(\overline{u})}-3))+\\
    & \sum_{u\in\widetilde{V}_i\setminus\{\overline{u}\}}N_{x,u}(\log(A_x/N_{x,u})+\log(q_i^{(u)}-1))+\varepsilon(A_x)\\
    &\geq \sum_{u\in\widetilde{V}_i}N_{x,u}(\log(A_x/N_{x,u})+\log(q_i^{(u)}-3))+\varepsilon(A_x).
\end{align*} 
 It remains to show that  $\gamma_x>0$ for all $x\in\widetilde{V}_{i+1}$ if $t_{i+1}$ is big enough. Observe that the term $\frac{4}{\alpha^{\ell_x}}$ can be made as small as we want by choosing $t_{i+1}>t_i$ sufficiently big. On the other hand, $\varepsilon(A_x)=O_{|\widetilde{V}_i|,M(i)}(\log(A_x))$ means that there exists a constant $k_i>0$, which depends only on the parameters of the level $i$, such that
    $$-k_i\log(A_x)\leq \varepsilon(A_x)\leq k_i\log(A_x).$$
    For any $x\in \widetilde{V}_{i+1}$, let 
    $$N_{x,\max}=\max\{N_{x,u}:u\in \widetilde{V}_i\},$$
    and let $V_{\max}=\{u\in\widetilde{V}_i:N_{x,u}=N_{x,\max}\}$.\\
    If $\widetilde{V}_i\setminus V_{\max}=\emptyset$, then $N_{x,u}=N_{x,\max}$ for each $u\in\widetilde{V}_i$, and $A_x=|\widetilde{V}_i|N_{x,\max}$. This implies that
    $$\sum_{u\in\widetilde{V}_i}N_{x,u}\log(A_x/N_{x,u})+\varepsilon(A_x)-\frac{4}{\alpha^{\ell_x}}-\widetilde{L}_i\log(\alpha)(|\widetilde{V}_{i+1}|+1)\geq \log(|\widetilde{V}_i|)A_x-k_i\log(A_x)-\frac{4}{\alpha^{\ell_x}}-\widetilde{L}_i\log(\alpha)(|\widetilde{V}_{i+1}|+1).$$
    By choosing $f$ in \Cref{lem:m_i} growing fast enough, for instance $f(n)\geq e^n$, this quantity can be made as big as needed by choosing $t_{i+1}>t_i$ sufficiently large, concluding that $\gamma_x>0$.\\
    If $\widetilde{V}_i\setminus V_{\max}\neq\emptyset$, let $N_{x,\sub}=\max\{N_{x,u}:u\in \widetilde{V}_i\setminus V_{\max}\}$. Note that $N_{x,\sub}\leq A_x/2$. We distinguish two cases.\\
    {\bf Case 1: $N_{x,\sub}\leq \sqrt{A_x}$.} In this case, $N_{x,u}\leq \sqrt{A_x}$ for all $u\not\in V_{\max}$, which implies that 
 $$
        \sum_{u\in\widetilde{V}_i}N_{x,u}\log(A_x/N_{x,u})+\varepsilon(A_x)-\frac{4}{\alpha^{\ell_x}}-\widetilde{L}_i\log(\alpha)(|\widetilde{V}_{i+1}|+1)\geq $$
        $$
        \sum_{u\in\widetilde{V}_i\setminus V_{\max}}N_{x,u}\log(\sqrt{A_x})+\varepsilon(A_x)-\frac{4}{\alpha^{\ell_x}}-\widetilde{L}_i\log(\alpha)(|\widetilde{V}_{i+1}|+1)
   $$
    and since $\varepsilon(A_x)\geq -k_i\log(A_x)$, we get
    $$\sum_{u\in\widetilde{V}_i}N_{x,u}\log(A_x/N_{x,u})+\varepsilon(A_x)-\frac{4}{\alpha^{\ell_x}}-\widetilde{L}_i\log(\alpha)(|\widetilde{V}_{i+1}|+1)
    \geq$$ 
    $$\log(A_x)\left(\frac{1}{2}\sum_{u\in\widetilde{V}_i\setminus V_{\max}}N_{x,u}-k_i-\frac{4}{\alpha^{\ell_x}}-\widetilde{L}_i\log(\alpha)(|\widetilde{V}_{i+1}|+1)\right).$$
    Again, by choosing $f$ in \Cref{lem:m_i} growing fast enough, 
    $$N_{x,u}-k_i-\frac{4}{\alpha^{\ell_x}}-\widetilde{L}_i\log(\alpha)(|\widetilde{V}_{i+1}|+1)>0 \quad \text{ for all } u\in \widetilde{V}_i,$$
    for $t_{i+1}>t_i$ sufficiently large. This leads to the conclusion that $\gamma_x>0$.\\
    {\bf Case 2: $N_{x,\sub}>\sqrt{A_x}$.} In this case, $N_{x,u}>\sqrt{A_x}$ for all $u\in \widetilde{V}_i\setminus V_{\max}$, which implies that $\sum_{u\in\widetilde{V}_i\setminus V_{\max}}N_{x,u}>\sqrt{A_x}$. Using this together with the fact that $A_x/N_{x,\sub}\geq 2$, we get that
     \begin{multline*}
\sum_{u\in\widetilde{V}_i}N_{x,u}\log(A_x/N_{x,u})+\varepsilon(A_x)-\frac{4}{\alpha^{\ell_x}}-\widetilde{L}_i\log(\alpha)(|\widetilde{V}_{i+1}|+1)
   \\ \geq \log(2)\sqrt{A_x}-k_i\log(A_x)-\frac{4}{\alpha^{\ell_x}}-\widetilde{L}_i\log(\alpha)(|\widetilde{V}_{i+1}|+1).         
     \end{multline*}
    Once again, this quantity can be made as large as needed by choosing $f$ growing fast enough in \Cref{lem:m_i} and $t_{i+1}>t_i$ sufficiently big.\\
    We conclude that an appropriate choice of $f$ in \Cref{lem:m_i} allows us to ensure conditions (1)-(3) by telescoping enough. Observe that the requirement that $f$ grows fast is compatible with condition \eqref{eq:condition_f}, it suffices to take any bijective increasing $f$ growing fast enough.
\end{proof}

Choose $t_{i+1}>t_i$ according to \Cref{lem:telescoping_low}. We have thus defined the sequences $(t_i)_{i\in\N}$ and $(q_i^{(u)})_{i\geq 1, u\in\widetilde{V}_i}$. Note that with this definition, if the function $f$ in \Cref{lem:m_i} verifies $f(n)\geq 2n-1$ for all $n$, then condition \eqref{eq:conditionq_i} of the splitting procedure is satisfied. Indeed, for any $u\in\widetilde{V}_i, 1\leq j\leq |\widetilde{V}_{i+1}|$ and $x\in\widetilde{V}_{i+1}$ such that $x=x_j$,
$$\widetilde{M}_i(x,u)\geq 2|\widetilde{V}_{i+1}|-1\geq |\widetilde{V}_{i+1}|+j-1
\geq q_i^{(u)}+j-1.$$

Note also that the condition $f(n)\geq 2n-1$ is compatible with condition \eqref{eq:condition_f} and with the required growth on $f$ to ensure conditions (2) and (3) of \Cref{lem:telescoping_low}, it suffices to take any bijective increasing $f$ growing fast enough. This allows us to apply the splitting procedure to the diagram $(V,E)$. Let $(V',E')$ be the splitting of $(V,E)$ with respect to $(t_i)_{i\in\N}$ and $(q_i^{(u)})_{i\geq 1, u\in\widetilde{V}_i}$ as defined before. According to \cref{lem:recognizable2}, to prove that it is possible to put an order on the edges of $(V',E')$ such that the sequence of morphisms read on $(V',E')$ satisfies the hypotheses of \cref{lem:recognizable}, it suffices to prove that for each $i\in\N$ and $x\in \widetilde{V}_{i+1}$ one has $\cO(x)\geq q_{i+1}^{(x)}$. \\

Note that for each $x\in\widetilde{V}_{i+1}$, 
$$\cO(x)=\frac{\left(\sum_{u\in\widetilde{V}_i} N_{x,u}\right)!}{(\prod_{v\in H_i^{(\overline{u})*}}B_i(x,v)!)(\prod_{u\in\widetilde{V}_i\setminus\{\overline{u}\}}\prod_{v\in H_i^{(u)}}B_i(x,v)!)}$$
From Equation \eqref{eq:o(x)general} we get that
$$\log(\cO(x))\geq \sum_{u\in\widetilde{V}_i}N_{x,u}(\log(A_x/N_{x,u})+\log(q_i^{(u)}-3))+\varepsilon(A_x).$$
Since $q_i^{(u)}=\lfloor \alpha^{\ell_u}\rfloor+4$, $q_i^{(u)}-3=\lfloor \alpha^{\ell_u}\rfloor+1\geq \alpha^{\ell_u}$. Thus,
$$\log(\cO(x))\geq \sum_{u\in\widetilde{V}_i}N_{x,u}(\log(A_x/N_{x,u})+\ell_u\log(\alpha))+\varepsilon(A_x).$$
We want to compare this quantity with $\log(q_{i+1}^{(x)})\leq \ell_x
\log(\alpha)+\frac{4}{\alpha^{\ell_x}}$. Note that
$$\ell_x=(j+1)\ell_{\overline{u}}+\sum_{u\in\widetilde{V}_i}\ell_uN_{x,u},$$
which implies that
$$\log(q_{i+1}^{(x)})\leq \log(\alpha)\left((j+1)L_i+\sum_{u\in\widetilde{V}_i}\ell_uN_{x,u}\right)+\frac{4}{\alpha^{\ell_x}}.$$
Substracting $\log(q_{i+1}^{(x)})$ from $\log(\cO(x))$ we obtain
$$\log(\cO(x))-\log(q_{i+1}^{(x)})\geq \sum_{u\in\widetilde{V}_i}N_{x,u}\log(A_x/N_{x,u})+\varepsilon(A_x)-L_i\log(\alpha)(j+1)-\frac{4}{\alpha^{\ell_x}}\geq \gamma_x>0,$$
where the last inequality comes from condition (3) in \Cref{lem:telescoping_low}. This proves that $\cO(x)\geq q_{i+1}^{(x)}$.\\

Let $\geq$ be an order on $(V',E')$ such that the sequence of read morphisms in $\cB=(V',E',\geq)$ satisfies the hypotheses of \cref{lem:recognizable} and let $(Y,S)$ be the $S$-adic shift associated to the directive sequence of read morphisms on $\cB$. Thanks to \cref{cor:suffconditions_isom}, $(Y,S)$ and $(X_{\cB}, V_{\cB})$ are conjugate, and thus the dimension group $K^0(Y,S)$ is order isomorphic to $K_0(V',E')$, which is in turn order isomorphic to $K^0(X,T)$. On the other hand, the diagram $(V',E')$ is pseudo-ERS (see \cref{rem:preservesers2}), therefore $(Y,S)$ is pseudo-Toeplitz.\\
Recall from \cref{rem:preservesers2} that $L_i'=\widetilde{L}_i=L_{t_i}$ for all $i$. By construction, each morphism read on the diagram $(V',E')$ is injective on words, so we have that for each $i\geq 1$,
$$p_Y(\widetilde{L}_i)\geq q_i^{(u^*)},$$
where $u^*\in \widetilde{V}_i$ is a vertex having the maximum number of paths from the top vertex. By definition, 
$$q_i^{(u^*)}\geq \alpha^{\ell_u^*}=\alpha^{\widetilde{L}_i}$$
Thus,
$$\frac{\log(p_Y(\widetilde{L}_i))}{\widetilde{L}_i}\geq  \frac{\widetilde{L}_i\log(\alpha)}{\widetilde{L}_i}=\log(\alpha).$$
Taking the limit as $i\to\infty$, we obtain that
$h(Y,S)\geq \log(\alpha)$.\\
Once we have shown that $\liminf_{n\to\infty} \frac{p_Y(n)}{g_n}=0$, it will be clear that $h(Y,S)\leq \log(\alpha)$. Indeed, if $(n_j)_{j\in\N}$ is a subsequence such that $\lim_{j\to\infty}\frac{p_Y(n_j)}{g_{n_j}}=0$, for each $\varepsilon >0$ there exists $J_\varepsilon$ such that for all $j\geq J_\varepsilon$, $\frac{p_Y(n_j)}{g_{n_j}}<\varepsilon$. Fix an $\varepsilon>0$. For $j\in\N$ large enough we will have
$$\log(p_Y(n_j))<\log(\varepsilon)+\log(g_{n_j}).$$
Dividing by $n_j$ and taking the limit as $j$ goes to $\infty$, we get
$$h(Y,S)\leq \log(\alpha).$$
It remains then to prove that $\liminf_{n\to\infty}\frac{p_Y(n)}{g_n}=0$.\\
From \cref{lem:m_i}, setting $\widetilde{m}_i=m_{t_i}=F(L_{t_i})=F(\widetilde{L}_i)$, we obtain that
$$\widetilde{m}_iL_i'\leq (\widetilde{m}_i+1)\ell_i'.$$
This condition implies that any word of length $\widetilde{m}_iL_i'$ in $L_{\bt}$ is contained in the image under $\tau_{[0,i)}$ of some word of length $(\widetilde{m}_i+2)$ in $L_{\bt}^{(i)}$. Thus,
$$p_Y(\widetilde{m}_iL_i')\leq \cP_i(\widetilde{m}_i+2)L_i',$$
where $\cP_i(\widetilde{m}_i+2)$ is the number of words of length $\widetilde{m}_i+2$ in the language $L_{\bt}^{(i)}$. Since $\cP_i(\widetilde{m}_i+2)\leq |V_i'|^{\widetilde{m}_i+2}$ and $|V_i'|\leq |\widetilde{V}_i|q_i^{(u^*)}$ , we obtain
$$\log(p_Y(\widetilde{m}_iL_i'))\leq (\widetilde{m}_i+2)(\log(|\widetilde{V}_i|)+\log(q_i^{(u^*)}))+\log(L_i').$$
Since $q_i^{(u^*)}\leq L_i'\log(\alpha)+\frac{4}{\alpha^{L_i'}}$, substracting  $$\log(g_{\widetilde{m}_iL_i'})=\widetilde{m}_iL_i'(\log(\alpha)+C_{\widetilde{m}_iL_i'})$$ we get
\begin{align*}
   \log\left(\frac{p_Y(\widetilde{m}_iL_i')}{g_{\widetilde{m}_iL_i'}}\right) & \leq (\widetilde{m}_i+2)(\log(|\widetilde{V}_i|)+L_i'\log(\alpha)+\frac{4}{\alpha^{L_i'}})+\log(L_i')-\widetilde{m}_iL_i'(\log(\alpha)+C_{\widetilde{m}_iL_i'}) \\
   & \leq (\widetilde{m}_i+2)(\log(|\widetilde{V}_i|+\frac{4}{\alpha^{L_i'}})+L_i'(2\log(\alpha)+1-\widetilde{m}_iC_{\widetilde{m}_iL_i'}).
\end{align*}

Recall that $\widetilde{m}_i=m_{t_i}=F(L_{t_i})$, $\widetilde{V}_i=V_{t_i}$ and $L_i'=\widetilde{L}_i=L_{t_i}$. Since $L_{t_i}\geq f(|V_{t_i}|)$ and $\frac{4}{\alpha^{L_{t_i}}}$ goes to $0$ as $i\to\infty$, for all $i \in\N$ large enough we have,
$$(F(L_{t_i})+2)(\log(|V_{t_i}|)+\frac{4}{\alpha^{L_{t_i}}})\leq (F(L_{t_i})+2)|V_{t_i}|\leq (F(L_{t_i})+2)f^{-1}(L_{t_i}).$$
From \cref{eq:condition_f} we get that 
$$(F(L_{t_i})+2)\log(|V_{t_i}|)\leq L_{t_i}=L_i'.$$
This yields

$$\log\left(\frac{p_Y(\widetilde{m}_iL_i')}{g_{\widetilde{m}_iL_i'}}\right) \leq L_i'(2\log(\alpha)+2-\widetilde{m}_iC_{\widetilde{m}_iL_i'}).$$
From Condition \eqref{eq:suffcondlow}, defining $\lambda=2\log(\alpha)+2-K<0$ we conclude that 
$$\log\left(\frac{p_Y(\widetilde{m}_iL_i')}{g_{\widetilde{m}_iL_i'}}\right) \leq \lambda L_i',$$
which diverges to $-\infty$ as $i$ goes to $\infty$. This proves that
$$\lim_{i\to\infty}\frac{p_Y(\widetilde{m}_iL_i')}{g_{\widetilde{m}_iL_i'}}=0,$$
and thus $\liminf_{n\to\infty}\frac{p_Y(n)}{g_n}=0$. This concludes the proof of the theorem.

\end{proof}

\begin{proof}[Proof of \cref{theo:main}] Let $(X,T)$ be a minimal Cantor system and let $(V,E)$ be a simple Bratteli diagram such that $K_0(V,E)\cong K^0(X,T)$. Thanks to \cref{theo:pseudoers}, we may assume that $(V,E)$ is pseudo-ERS and also that in $(V,E)$, the number of paths from the top vertex to any vertex at level $V_i$ is greater than $f(|V_i|)$ for any increasing function $f$. 

Choose and fix a strictly decreasing sequence $(\alpha_i)_{i\in\N}$ such that $\lim_{i\to\infty}\alpha_i=\alpha$. \\

For a given sequence of telescoping depths $(t_i)_{i\in\N}$, $(\widetilde{V},\widetilde{E})$ the telescoping of $(V,E)$ to $(t_i)_{i\in\N}$ and $i\geq 1$, choose and fix a vertex $\overline{u}\in\widetilde{V}_i$ as in \Cref{sec:splitting}. For $u\in\widetilde{V}_i$ and $x=x_j\in \widetilde{V}_{i+1}$ with $1\leq j\leq |\widetilde{V}_{i+1}|$, let $N_{x,u}$ and $A_x$ be defined as in the proof of \Cref{theo:low}.\\

Once again, we inductively define the sequences $(t_i)_{i\in\N}$ and $(q_i^{(u)})_{i\geq 1, u\in\widetilde{V}_i}$ and then apply to $(V',E')$ the splitting procedure described in \cref{sec:splitting}.\\

Let $t_0=0, t_1=1$. Let $i\geq 1$ and suppose we have defined $t_{i}$, and thus $\widetilde{V}_i$ is defined. For each $u\in\widetilde{V}_i$, define
\begin{equation}
    q_i^{(u)}=\lfloor \alpha_i^{\ell_u}\rfloor+4.
\end{equation}

The following lemma plays the role of \Cref{lem:telescoping_low} in the proof of \Cref{theo:low}. Its proof is very similar and uses the same kind of arguments. The only substantial difference is condition (4) below.

\begin{lemma}\label{lem:telescoping}
It is possible to choose $t_{i+1}>t_{i}$ in such a way that, for all $x\in \widetilde{V}_{i+1}$, the following conditions hold,
\begin{enumerate}
    \item $|\widetilde{V}_{i+1}|>\max_{u \in \widetilde{V}_i}\{ q_i^{(u)}\}$,
    \item  for all $x\in \widetilde{V}_{i+1}$, \begin{equation}\label{eq:o(x)general_1}
         \log\left (\frac{\left(\sum_{u\in\widetilde{V}_i} N_{x,u}\right)!}{(\prod_{v\in H_i^{(\overline{u})*}}B_i(x,v)!)(\prod_{u\in\widetilde{V}_i\setminus\{\overline{u}\}}\prod_{v\in H_i^{(u)}}B_i(x,v)!)}\right)\geq \sum_{u\in\widetilde{V}_i}N_{x,u}(\log(A_x/N_{x,u})+\log(q_i^{(u)}-3))+\varepsilon(A_x),
    \end{equation}
    \item for all $x\in \widetilde{V}_{i+1}$,
$$\delta_x:=\sum_{u\in\widetilde{V}_i}N_{x,u}\log(A_x/N_{x,u})+\varepsilon(A_x)-\frac{4}{\alpha^{\ell_x}}-\widetilde{L}_i\log(\alpha_0)(|\widetilde{V}_{i+1}|+1)>0, \quad \text{ and }$$
    \item $\frac{g(\widetilde{L}_{i+1})}{\alpha_{i+1}^{\widetilde{L}_{i+1}}}<\frac{1}{i+1}$.
\end{enumerate}
\end{lemma}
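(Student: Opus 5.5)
Conditions (1)--(3) are handled exactly as in \Cref{lem:telescoping_low}, with $\alpha$ replaced by $\alpha_i$ in the definition of $q_i^{(u)}$ and by $\alpha_0$ in the error term. Condition (4) is new, and it only asks that $\widetilde{L}_{i+1}$ be large. Every condition involves only level-$i$ data (which is already fixed) together with quantities that grow as $t_{i+1}$ grows. So the plan is to show that each condition holds for all sufficiently large $t>t_i$, and then take $t_{i+1}$ larger than all the resulting thresholds.

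For (1), we use that $|V_t|\to\infty$ (we may assume this by \Cref{prop:unbounded}), while $\max_u q_i^{(u)}$ depends only on level $i$. For (2), we apply \Cref{lem:stirling2} with $d=|\widetilde{V}_i|$, $m_u=q_i^{(u)}$ for $u\neq\overline{u}$ and $m_{\overline{u}}=q_i^{(\overline{u})}-2$, $a_{vu}=B_i(x,v)$, $N_u=N_{x,u}$, $A=A_x$. The hypothesis $N_{x,u}/B_i(x,v)\geq m_u-1$ holds because the $B_i(x,v)$ differ by at most one within each $H_i^{(u)}$. The entries are large because the matrix entries grow with telescoping; \Cref{theo:pseudoers} with a fast-growing $f$ makes this quantitative. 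Finally, replacing $q_i^{(u)}-1$ by $q_i^{(u)}-3$ only weakens the bound. For (3), we copy the case analysis of \Cref{lem:telescoping_low}: all $N_{x,u}$ equal; $N_{x,\sub}\leq\sqrt{A_x}$; $N_{x,\sub}>\sqrt{A_x}$. In each case the positive main term grows at least like $\min\{\log(|\widetilde{V}_i|)A_x,\ \tfrac12\log(A_x)\min_u N_{x,u},\ \log(2)\sqrt{A_x}\}$. Against this we have the negative terms: $k_i\log(A_x)$, where $k_i$ depends only on level $i$; $4/\alpha^{\ell_x}$, which is bounded; and $\widetilde{L}_i\log(\alpha_0)(|\widetilde{V}_{i+1}|+1)$. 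The last term is the delicate one, because $|\widetilde{V}_{i+1}|$ also grows with $t_{i+1}$. We control it as in the earlier proof: choose $f$ fast enough, for instance $f(n)\geq e^n$, so that every $N_{x,u}$, and hence $A_x$, dominates any fixed multiple of $|\widetilde{V}_{i+1}|$.

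For (4), we interpret $g$ as the given sequence $g_n$. The hypothesis $\log(g_n)/n\to\log(\alpha)$ gives $g_n\leq(\alpha+\eta)^n$ for $n$ large, for any $\eta>0$. Since $\alpha_{i+1}>\alpha$ is fixed once $i$ is fixed, we choose $\eta$ with $\alpha+\eta<\alpha_{i+1}$. Then $g_n/\alpha_{i+1}^n\leq((\alpha+\eta)/\alpha_{i+1})^n\to0$, so (4) holds as soon as $\widetilde{L}_{i+1}=L_{t_{i+1}}$ is large enough. This is guaranteed because the diagram is simple, so $L_t\to\infty$.

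The main obstacle is step (3), specifically the term proportional to $|\widetilde{V}_{i+1}|$, since both sides of the inequality grow with $t_{i+1}$. The resolution is the preliminary reduction via \Cref{theo:pseudoers}(ii): there we may take $f$ growing fast enough that $\min_u N_{x,u}\geq f(|\widetilde{V}_{i+1}|)-|\widetilde{V}_{i+1}|-1$ beats $\widetilde{L}_i\log(\alpha_0)(|\widetilde{V}_{i+1}|+1)+k_i\log(A_x)+4$. Because this inequality relates only level-$i$ constants to the growth of $f$ in $|\widetilde{V}_{i+1}|$, it holds for all sufficiently large $t$.
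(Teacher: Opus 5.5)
Your proposal is correct and follows essentially the same route as the paper. Conditions (1)--(3) are reduced to the argument of \Cref{lem:telescoping_low}, with $\alpha$ replaced by $\alpha_0$ in the error term. Condition (4) follows from $\log(g_n)/n\to\log(\alpha)<\log(\alpha_{i+1})$, which forces $g_n/\alpha_{i+1}^n\to 0$; you argue this directly with an auxiliary $\eta$, the paper by contradiction along a subsequence, and then one telescopes until $L_{t_{i+1}}$ is large.

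There is one imprecision, in your last paragraph. There you say $\min_u N_{x,u}$ must beat $k_i\log(A_x)$, but $k_i\log(A_x)$ is not a level-$i$ constant, and a very unbalanced row can make it larger than $\min_u N_{x,u}$. The case analysis you invoke already absorbs this term: it factors out $\log(A_x)$ when $N_{x,\sub}\leq\sqrt{A_x}$, and uses the $A_x$ or $\sqrt{A_x}$ main term otherwise. So the argument still goes through.
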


\begin{proof}
    Conditions (1) and (2) are proved exactly in the same way as conditions (1) and (2) of \Cref{lem:telescoping_low}. Indeed, condition (2) itself is identical to condition (2) of \Cref{lem:telescoping_low} and its proof relies on the appropriate choice of the function $f$ in \Cref{theo:pseudoers}. To prove condition (3), note that the only difference between $\delta_x$ and the quantity $\gamma_x$ defined in \Cref{lem:telescoping_low} is that the term $\widetilde{L}_i\log(\alpha)(|\widetilde{V}_{i+1}|)$ has been replaced by $\widetilde{L}_i\log(\alpha_0)(|\widetilde{V}_{i+1}|)$. Since both $\alpha$ and $\alpha_0$ are fixed constants, the same argument used to prove that $\gamma_x>0$ works to prove that $\delta_x>0$. It remains to prove condition (4). We claim that there exists $N\in\N$ such that $\forall n\geq N$, 
$$\frac{g(n)}{\alpha_{i+1}^n}<\frac{1}{i+1}.$$
If this is not true, then there exists an increasing subsequence $(n_j)_{j\in\N}$ such that for all $j\in \N$, 
$$\frac{g(n_j)}{\alpha_{i+1}^{n_j}}\geq \frac{1}{i+1},$$
which implies that
$$\frac{\log(g(n_j))}{n_j}\geq \log(\alpha_{i+1})+\frac{1}{n_j}\log(1/i+1).$$
Taking the limit $j\to\infty$, we get that $\log(\alpha)\geq \log(\alpha_{i+1})>\log(\alpha)$, a contradiction. Thus, it suffices to take $t_{i+1}>t_i$ large enough so that $L_{t_{i+1}}>N$ and condition (4) will hold.
\end{proof}

Choose $t_{i+1}>t_i$ according to \Cref{lem:telescoping}. We have thus defined the sequences $(t_i)_{i\in\N}$ and $(q_i^{(u)})_{i\geq 1, u\in\widetilde{V}_i}$. Note that, as in the proof of \Cref{theo:low}, with this choice of $(t_i)_{i\in\N}$ and $(q_i^{(u)})_{i\geq 1, u\in\widetilde{V}_i}$, if the function $f$ in \Cref{lem:m_i} verifies $f(n)\geq 2n+1$ for all $n$, then condition \eqref{eq:conditionq_i} of the splitting procedure is satisfied. This requirement is compatible with the growth speed of $f$ required for conditions (2) and (3) of \Cref{lem:telescoping}, and allows us to apply the splitting procedure to the diagram $(V,E)$. Let $(V',E')$ be the splitting of $(V,E)$ with respect to $(t_i)_{i\in\N}$ and $(q_i^{(u)})_{i\geq 1, u\in\widetilde{V}_i}$ as defined before. Following \cref{lem:recognizable2}, to prove that it is possible to put an order on the edges of $(V',E')$ such that the sequence of morphisms read on $(V',E')$ satisfies the hypotheses of \cref{lem:recognizable}, we need to prove that for each $i\in\N$ and $x\in \widetilde{V}_{i+1}$ one has $\cO(x)\geq q_{i+1}^{(x)}$. From Equation \eqref{eq:o(x)general_1} we get that 

$$\log(\cO(x))\geq \sum_{u\in\widetilde{V}_i}N_{x,u}(\log(A_x/N_{x,u})+\log(q_i^{(u)}-3))+\varepsilon(A_x).$$
Since $q_i^{(u)}=\lfloor \alpha_i^{\ell_u}\rfloor+4$, $q_i^{(u)}-3=\lfloor \alpha_i^{\ell_u}\rfloor+1\geq \alpha_i^{\ell_u}$. Thus,
$$\log(\cO(x))\geq \sum_{u\in\widetilde{V}_i}N_{x,u}(\log(A_x/N_{x,u})+\ell_u\log(\alpha_i))+\varepsilon(A_x).$$
We want to compare this quantity with $\log(q_{i+1}^{(x)})\leq \ell_x
\log(\alpha_{i+1})+\frac{4}{\alpha_{i+1}^{\ell_x}}\leq \ell_x\log(\alpha_i)+\frac{4}{\alpha^{\ell_x}}$. Substracting $\log(q_{i+1}^{(x)})$ from $\log(\cO(x))$ we obtain
$$\log(\cO(x))-\log(q_{i+1}^{(x)})\geq \sum_{u\in\widetilde{V}_i}N_{x,u}\log(A_x/N_{x,u})+\varepsilon(A_x)-L_i\log(\alpha_i)(j+1)-\frac{4}{\alpha^{\ell_x}}.$$
Since $\alpha_i<\alpha_0$ and $j\leq |\widetilde{V}_{i+1}|$, we obtain that
$$\log(\cO(x))-\log(q_{i+1}^{(x)})\geq \sum_{u\in\widetilde{V}_i}N_{x,u}\log(A_x/N_{x,u})+\varepsilon(A_x)-L_i\log(\alpha_0)(|\widetilde{V}_{i+1}|+1)-\frac{4}{\alpha^{\ell_x}}=\delta_x>0,$$
where the last inequality comes from condition (3) in \Cref{lem:telescoping}. This proves that $\cO(x)\geq q_{i+1}^{(x)}$.\\

Let $\geq$ be an order on $(V',E')$ such that the sequence of read morphisms in $\cB=(V',E',\geq)$ satisfies the hypotheses of \cref{lem:recognizable} and let $(Y,S)$ be the $S$-adic shift associated to the directive sequence of read morphisms on $\cB$. Thanks to \cref{cor:suffconditions_isom}, $(Y,S)$ and $(X_{\cB}, V_{\cB})$ are conjugate, and thus the dimension group $K^0(Y,S)$ is order isomorphic to $K_0(V',E')$, which is in turn order isomorphic to $K^0(X,T)$. On the other hand, the diagram $(V',E')$ is pseudo-ERS (see \cref{rem:preservesers2}), therefore $(Y,S)$ is pseudo-Toeplitz. In the following we give the bounds on the complexity and topological entropy of $(Y,S)$.\\

Recall that $L_i'=\widetilde{L}_i=L_{t_i}$ for all $i$. Since each morphism is injective on letters, we have that for each $i\geq 1$,
$$p_Y(\widetilde{L}_i)\geq q_i^{(u^*)},$$
where $u^*\in \widetilde{V}_i$ is a vertex having the maximum number of paths from the top vertex. Since $q_i^{(u^*)}=\lfloor \alpha_i^{\ell_{u^*}}\rfloor +4\geq \alpha_i^{\widetilde{L}_i}$, we get
$$\frac{g(\widetilde{L}_i)}{p_Y(\widetilde{L}_i)}<\frac{g(\widetilde{L}_i)}{\alpha_i^{\widetilde{L}_i}}=\frac{g(L_{t_i})}{\alpha_i^{L_{t_i}}}<\frac{1}{i},$$
where the last inequality holds by the last condition in \cref{lem:telescoping}. This implies that 
$$\liminf_{i\to\infty}\frac{g(i)}{p_Y(i)}=0.$$
Note that this automatically implies that $h(Y,S)\geq \log(\alpha)$. Indeed, since $\lim_{i\to\infty}\frac{g(L_i')}{p_Y(L_i')}=0$, for all $\varepsilon>0$ there exists $I_{\varepsilon}\in\N$ such that for all $i\geq I_{\varepsilon}$, $\frac{g(L_i')}{p_Y(L_i')}<\varepsilon$. Fix an $\varepsilon>0$. For $i\in\N$ large enough, we will have
$$\log(g(L_i'))< \log(\varepsilon)+\log(p_Y(L_i')).$$
Dividing by $L_i'$ and taking the limit as $i\to \infty$, we obtain that
$h(Y,S)\geq \log(\alpha)$.\\

Finally, by \cref{theo:sadicentropy}, we know that
$$h(Y,S)\leq \liminf_{i\to\infty}\frac{\log(|V_i'|)}{\ell_i'}.$$
By definition,
$$|V_i'|=\sum_{u\in\widetilde{V}_i} q_i^{(u)}\leq |\widetilde{V}_i|(\alpha_i^{\widetilde{L}_i}+4)=|\widetilde{V}_i|(\alpha_i^{L_i'}+4)$$ 
Thus, 
$$\frac{\log(|V_i'|)}{\ell_i'}\leq\frac{\log(|\widetilde{V}_i|)}{\ell_i'}+\frac{L_i'\log(\alpha_i)}{\ell_i'}+\frac{4}{\ell_i'\alpha_i^{L_i'}}\leq \frac{\log(|\widetilde{V}_i|)}{\ell_i'}+\frac{L_i'\log(\alpha_i)}{\ell_i'}+\frac{4}{\ell_i'\alpha^{L_i'}}.$$
Since in $(V,E)$ the number of paths from the top vertex to any vertex at $V_i$ is at least $|V_i|$ and $\ell_i'=\widetilde{\ell}_i$, by \Cref{theo:pseudoers} the first term goes to zero as $i$ goes to $\infty$ (see also \cref{rem:preservesedgesvsvertices}). Since $(V',E')$ is pseudo-ERS and $\alpha_i$ goes to $\alpha$ as $i$ goes to $\infty$, the second term goes to $\log(\alpha)$ as $i$ goes to $\infty$. Finally, since $\alpha\geq 1$, the third term goes to zero as $i$ goes to $\infty$.
We obtain that $h(Y,S)\leq \log(\alpha)$ and conclude that $h(Y,S)=\log(\alpha)$. This finishes the proof of the theorem.

\end{proof}

Recall that an abelian group $(G,+)$ is called {\em divisible} if for each $g\in G$ and each integer $k\geq 1$, there exists $h\in G$ such that $kh=g$. The following lemma shows that in an SOE class represented by a divisible dimension group it is always possible to find systems admitting ERS Bratteli Vershik representations. Its proof is inspired by the techniques used in \cite{Gjerde_Johansen_Bratteli_Toeplitz:2000} and \cite{Cecchi_Donoso_SOE_superlinear_complexity:2022}.

\begin{lemma}\label{lem:divisibleers}
    Let $(X,T)$ be a minimal Cantor system and let $K^0(X,T)=(G,G^+,u)$ be the dimension group of $(X,T)$. If $G$ is a divisible group, there exists a minimal Cantor system $(\widetilde{X},\widetilde{T})$ verifying
    \begin{itemize}
        \item $K^0(X,T)\cong K^0(\widetilde{X},\widetilde{T})$, and
        \item $(\widetilde{X},\widetilde{T})$ admits an ERS Bratteli-Vershik representation.
    \end{itemize}
\end{lemma}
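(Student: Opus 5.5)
Since $K^0(X,T)$ is a simple dimension group with unit, the plan is to realize it as the dimension group of a simple Bratteli diagram and then modify that diagram by telescoping and microscoping until every level has equal row sums. Divisibility of $G$ is used to absorb the resulting scalar factors. Concretely, I start with a simple Bratteli diagram $(V,E)$ with matrices $(M_i)$ such that $K_0(V,E)\cong (G,G^+,u)$. Telescoping if necessary, I assume all $M_i$ are strictly positive. I write $h_i\in\Z_{>0}^{|V_i|}$ for the height vector $h_i=M_{[0,i)}$ applied to the top vertex (so $h_i(v)$ is the number of paths from the top to $v$). The goal is a diagram $(V',E')$ with the same $K_0$ in which each height vector $h_i'$ is constant, equal to some $N_i\mathbf{1}$. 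The existence of the required minimal Cantor system $(\widetilde X,\widetilde T)$ then follows by putting a proper order on the simple diagram $(V',E')$. Such an order always exists once all incidence entries are at least $2$: order each $r^{-1}(v)$ so that its minimal edge comes from a fixed vertex and its maximal edge from another fixed vertex. By \cite{HPS}, $K^0(X_{B'},V_{B'})\cong K_0(V',E')\cong K^0(X,T)$.

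The key construction is an intermediate level inserted between levels $i$ and $i+1$, in the spirit of the proof of \Cref{theo:pseudoers}. I fix a target $N_i$ that is a common multiple of all $h_i(v)$, for instance $N_i=\operatorname{lcm}_v h_i(v)\cdot c_i$. The map $A_i\colon \Z^{V_i}\to\Z^{V_i}$ is the diagonal matrix $D_i=\diag(N_i/h_i(v))$, which makes the intermediate heights $D_ih_i=N_i\mathbf 1$ constant. I then need $B_i$ with $B_iD_i=M_{[i,t)}$ for some later level $t$. This requires every column $u$ of $M_{[i,t)}$ to be divisible by $N_i/h_i(u)$. That divisibility cannot be arranged in general, and this is where the divisibility of $G$ enters. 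The idea is to replace the original diagram first by one realizing the same group in which divisibility is built in. Since $G$ is divisible, $G\cong G\otimes\QQ$-like behaviour lets me microscope: for any $k$ and any element $g\in \Z^{V_i}$, the image of $g$ in $G$ is $k$ times some element that appears at a later level. Hence, after telescoping, I can find $t>i$ with $M_{[i,t)}=k\,P$ for an integer matrix $P$. To get this, I write $\Z^{V_i}\to G$ and, for each standard basis vector $e_u$, choose a $k$-th root in $G$. That root is represented at some level $t$, and it is positive because $G^+$ is determined by traces and $e_u>0$ is order unit-like after simplicity. Its equality in the limit holds at some finite level. Taking $k=N_i$, I get $M_{[i,t)}=N_iP$ with $P\geq 0$, and I set $B_i=PD_i^{-1}N_i$, which is an integer nonnegative matrix because each $N_i/ (N_i/h_i(u))=h_i(u)$ is an integer. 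Then $B_iD_i=M_{[i,t)}$. The new diagram alternates $D_i$ and $B_i$, so its odd and even telescopings recover the original diagram up to telescoping, and $K_0$ is preserved.

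The resulting diagram has matrices $M'$ alternating $D_{i+1}B_i$. The heights at the intermediate levels are $N_i\mathbf 1$, so after telescoping to intermediate levels the diagram is ERS. Simplicity is preserved because $P>0$ after further telescoping, and the entries grow, which lets me also require all entries to be at least $2$ for the proper ordering. The step I expect to be the main obstacle is justifying that $M_{[i,t)}$ becomes exactly divisible by $N_i$ entrywise with a \emph{nonnegative} quotient. Divisibility of $G$ gives $k$-th roots only in the limit group. I must ensure that the chosen roots of $[e_u]$ are positive, using the fact that a $k$-th root of a strictly positive element in a simple dimension group is strictly positive via traces. I must also ensure that the relation $N_i\cdot(\text{root})=[e_u]$ is realized at a finite level. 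For the remaining entries I would adjust by adding multiples, using the Bézout-type argument from the proof of \Cref{theo:pseudoers}. Since $G$ is torsion-free, the root is unique, which should keep the bookkeeping consistent across the $|V_i|$ columns.
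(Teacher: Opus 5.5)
Your argument is correct, and it differs from the paper's in how the isomorphism of dimension groups is certified.

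\textbf{The paper's route.} The paper first arranges $M_0=(1,1)^t$. It then uses divisibility to pass to rational coefficients, $\varinjlim(\Z^{|V_i|},M_i)\cong\varinjlim(\QQ^{|V_i|},M_i)$, and conjugates by positive rational diagonal matrices $J_i$. This yields integer ERS matrices $\widetilde M_{i-1}=J_iM_{i-1}J_{i-1}^{-1}$ whose entries are divisible by $i$. Finally it invokes lemmas from the authors' earlier paper to identify the integral and rational direct limits of $(\widetilde M_i)$.

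\textbf{Your route produces the same diagram shape.} ERS forces each $J_i$ to be a scalar multiple of $\diag(1/h_i(u))$. Your connecting matrices
$$D_{i+1}B_i=D_{i+1}M_{[t_i,t_{i+1})}D_i^{-1},\qquad D_i=\diag\bigl(N_i/h_{t_i}(u)\bigr),$$
are therefore exactly the paper's diagonal conjugations.

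\textbf{What differs is how the isomorphism is justified.} You use divisibility of $G$ to make this conjugation integral through a genuine intermediate level, $M_{[t_i,t_{i+1})}=B_iD_i$. Then $K_0$ is preserved, together with order and unit, simply because both diagrams are telescopings of the alternating diagram.

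\textbf{Trade-off.} Your version is more self-contained. It needs no comparison of $\Z$- and $\QQ$-limits, no divisibility-by-$(i+1)$ device, and no preprocessing of the first level. The cost is extra telescoping. The paper works matrix by matrix without telescoping, but relies on the cited lemmas.

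\textbf{The obstacle you flag is already resolved.} Your finite-level realization of $N_i\,g_u=[e_u]$ shows that each column $M_{[t_i,t)}e_u$ is $N_i$ times an integer vector. Hence $P=M_{[t_i,t)}/N_i$ is automatically a nonnegative integer matrix, and in fact positive, because $M_{[t_i,t)}$ is positive. So you do not need any of the following:
\begin{itemize}
\item positivity of the roots via traces;
\item uniqueness of roots from torsion-freeness;
\item the B\'ezout-type adjustment for ``remaining entries''.
\end{itemize}
You can drop these hedges. The choice $k=N_i$ suffices because each $N_i/h_{t_i}(u)$ divides $N_i$, so $B_i=P\,\diag(h_{t_i}(u))$ is integral with $B_iD_i=N_iP=M_{[t_i,t)}$.
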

\begin{proof}
     Let $(V,E)$ be a simple Bratteli diagram such that $K_0(V,E)\cong K^0(X,T)$. Let $(M_i)_{i\in\N}$ be the sequence of incidence matrices of $(V,E)$. Up to telescoping, we may assume that the entries of $M_i$ are greater than $2$ for all $i\geq 1$. Note that we may also assume that $|V_i|\geq 2$ for all $i\geq 1$ and that $|V_1|=2$ with a single edge connecting $v_0$ with each of the two vertices of $V_1$. Indeed, write the first incidence matrix as $M_0=B_0C_0$, where $C_0=(1,1)^t$ and $B_0$ is any $|V_1|\times 2$ positive integer matrix verifying $B_0(j,1)+B_0(j,2)=M_0(j,1)$ for all $1\leq j\leq |V_1|$; for $i\geq 1$, if $|V_i|\geq 2$, define $C_i=\Id_{|V_i|\times |V_i|}$ and $B_i=M_i$, if $|V_i|=1$, define $C_i=(1,1)^t$ and $B_i$ as any $|V_{i+1}|\times 2$ positive integer matrix verifying $B_i(j,1)+B_i(j,2)=M_i(j,1)$ for all $1\leq j\leq |V_{i+1}|$. Then, define $\overline{M_0}=C_0$ and $\overline{M_i}=C_iB_{i-1}$ for all $i\geq 1$. The Bratteli diagram associated with the sequence $(\overline{M_i})_{i\in\N}$ has the desired property and the same dimension group as $(V,E)$.\\
Since we assumed that $M_0=(1,1)^t$, the first incidence matrix has the ERS property. The group $G$ corresponds to the inductive limit $\varinjlim_i (\Z^{|V_i|},M_i)$ and $G$ being divisible, we also have that $\varinjlim_i (\Z^{|V_i|},M_i)\cong \varinjlim_i (\QQ^{|V_i|},M_i)$ (see for instance \cite[Lemma 2.5]{Cecchi_Donoso_SOE_superlinear_complexity:2022}).
We inductively define a sequence $(\widetilde{M}_i)_{i\in\N}$ which will be the sequence of incidence matrices of a Bratteli diagram corresponding to a Bratteli-Vershik representation of the system $(\widetilde{X},\widetilde{T})$. Let $\widetilde{M}_0=M_0$, $J_0=1$, $J_1=\Id_{|V_1|\times |V_1|}$ and $k_1=1$. For $1\leq j\leq |V_2|$, let $\gamma_j$ be the sum of the $j$th row of $M_1$. Let $J_2'$ be the diagonal $|V_2|\times |V_2|$ matrix defined by
$$J_2'=\diag(\gamma_1^{-1},\gamma_2^{-1},\cdots, \gamma_{|V_2|}^{-1}).$$
Let $s_2\in \N^{\ast}$ be such that $s_2J_2'M_1$ has integer entries (for instance, take $s_2$ to be the least common multiple of the denominators of the entries of $J_2'M_1$), and let  
$k_2=2s_2$. Define $J_2=k_2J_2'$ and $\widetilde{M}_1=J_2M_1$. Note that $J_2$ is invertible over $\QQ$ and that $\widetilde{M}_1$ is a positive integer matrix whose entries are all divisible by $2$. 
Note also that $\widetilde{M}_1$ has the ERS property, with $\widetilde{M}_1(1,1,\cdots, 1)^t=(k_2,k_2,\cdots, k_2)^t$.

Let $i>2$. We will define $\widetilde{M}_{i-1}$. Suppose we have defined $J_2,J_3,\cdots, J_{i-1}$, all of them being invertible over $\QQ$. Let $J_i'$ be the diagonal $|V_i|\times |V_i|$ invertible matrix over $\QQ$ such that $J_i'M_{i-1}J_{i-1}^{-1}(1,1,\cdots, 1)^t=(1,1,\cdots, 1)^t$, which means that the $j$th entry of the diagonal of $J_i'$ corresponds to the inverse of the sum of the row $j$ in the matrix $M_{i-1}J_{i-1}^{-1}$. Let $s_i\in \N^{\ast}$ such that  $s_iJ_i'M_{i-1}J_{i-1}^{-1}$ has integer entries. 
Let $k_i=is_i$. Define $J_i=k_iJ_i'$ and $\widetilde{M}_{i-1}=J_iM_{i-1}J_{i-1}^{-1}$.
Note that for all $i\geq 0$, $\widetilde{M}_i$ is a $|V_{i+1}|\times |V_i|$ positive integer matrix which has the ERS property with $\widetilde{M}_i(1,1,\cdots, 1)^t=(k_{i+1},k_{i+1},\cdots, k_{i+1})^t$. Note also that for all $i\geq 1$, $\widetilde{M}_i$ is a positive integer matrix whose entries are all divisible by $i+1$. 

Let $(\widetilde{V},\widetilde{E})$ be the simple Bratteli diagram associated to the sequence of matrices $(\widetilde{M}_i)_{i\in\N}$ and let $(\widetilde{X},\widetilde{T})$ be a minimal Cantor system such that $K^0(\widetilde{X},\widetilde{T})\cong K_0(\widetilde{V},\widetilde{E})$. By construction, $(\widetilde{X},\widetilde{T})$ admits an ERS Bratteli-Vershik representation. Since all matrices $J_i$ are positive and invertible over $\QQ$, the inductive limits $\varinjlim_i (\QQ^{|V_i|},M_i)$ and $\varinjlim_i (\QQ^{|V_i|},\widetilde{M}_i)$ are order isomorphic, and the latter is at the same time order isomorphic to $\varinjlim_i (\Z^{|V_i|},\widetilde{M}_i)$, since all the entries of each $\widetilde{M}_i$ are divisible by $i+1$ (\cite[Lemmas 2.4 and 2.5]{Cecchi_Donoso_SOE_superlinear_complexity:2022}). This proves that $K^0(X,Y)\cong K^0(\widetilde{X},\widetilde{T})$ and concludes the proof of the lemma. 
\end{proof}

\begin{proof}[Proof of \cref{cor:alphaentropymeasureslow}]
Let $K$ be any Choquet simplex $K$ and choose a countable, dense subgroup $G$ of $\Aff(K)$ which is a $\QQ$-vector space and contains the constant function $1$. By \cite[Theorem 3.22]{Hoynes_toeplitz_K_theory:2016} (originally proved in \cite{Effros_dimension_C*_algebras:1981}), $G$ is divisible and $K$ is affine homeomorphic to the set of traces $S(G,G^+,1)$, where the positive cone $G^+$ is defined as in \cite[Theorem 3.22]{Hoynes_toeplitz_K_theory:2016}. Let $(X,T)$ be a minimal Cantor system such that $K^0(X,T)\cong (G,G^+,1)$. Such a system always exists thanks to \cite[Corollary 6.3]{Herman1992}. By \cite[Theorem 5.5]{Herman1992}, $\cM(X,T)\cong S(G,G^+,1)\cong K$. Since the set of invariant measures is preserved under strong orbit equivalence, we may apply \Cref{lem:divisibleers} to get a system $(\widetilde{X},\widetilde{T})$ which admits an ERS Bratteli-Vershik representation, and then apply \cref{theo:low} to $(\widetilde{X},\widetilde{T})$ to get a Toeplitz subshift $(Y,S)$ with the desired properties.
\end{proof}

\begin{proof}[Proof of \cref{cor:zeroentropysoe}]
It follows directly from \cref{theo:main} by taking $\alpha=1$.
\end{proof}

\begin{proof}[Proof of \cref{cor:alphaentropymeasures}]
For any Choquet simplex $K$, let $G$ be a subgroup of $\Aff(K)$  which is a $\QQ$-vector space and contains the constant function $1$, so that $K\cong S(G,G^+,1)$, where $G^+$ is defined as in \cite[Theorem 3.22]{Hoynes_toeplitz_K_theory:2016}. Again, take a minimal Cantor system $(X,T)$ such that $K^0(X,T)\cong (G,G^+,1)$, for which we have $\cM(X,T)\cong S(G,G^+,1)\cong K$. Apply \Cref{lem:divisibleers} to get a system $(\widetilde{X},\widetilde{T})$ which admits an ERS Bratteli-Vershik representstion, and the apply \cref{theo:main} to $(\widetilde{X},\widetilde{T})$ to get a Toeplitz subshift $(Y,S)$ with the desired properties.
\end{proof}

\begin{proof}[Proof of \cref{cor:zeroentropymeasures}]
It follows directly from \cref{cor:alphaentropymeasures} by taking $\alpha=1$.
\end{proof}

\begin{rem}\label{rem:K0divisible}
Note that, since the splitting procedure does not affect the ERS property (see \cref{rem:preservesers2}), \cref{lem:divisibleers} together with \cref{teo:erstoeplitz} imply that both in \cref{theo:low} and \cref{theo:main}, if $K^0(X,T)=(G,G^+,u)$ and $G$ is divisible, then the resulting subshift $(Y,S)$ is a Toeplitz subshift.    
\end{rem}

\bibliographystyle{abbrv}
\bibliography{refs}

\end{document}